\theoremstyle{change}
\newcommand{\leftexp}[2]{{\vphantom{#2}}^{#1}%
      \kern-\scriptspace%
      {#2}}
\renewcommand{\Im}{\mathrm{Im}}
\renewcommand{\Re}{\mathrm{Re}}
\renewcommand{\H}{\mathbb H}
\newcommand{\A}{{\mathbb A}}
\newcommand{\D}{{\mathcal D}}
\newcommand{\Q}{{\mathbb Q}}
\newcommand{\Z}{{\mathbb Z}}
\newcommand{\R}{{\mathbb R}}
\newcommand{\C}{{\mathbb C}}
\newcommand{\bs}{\backslash}
\newcommand{\p}{\mathfrak p}
\newcommand{\f}{\mathfrak f}
\newcommand{\OF}{{\mathfrak o}}
\newcommand{\GL}{{\rm GL}}
\newcommand{\PGL}{{\rm PGL}}
\newcommand{\GSpin}{{\rm GSpin}}
\newcommand{\SO}{{\rm SO}}
\newcommand{\GSp}{{\rm GSp}}
\newcommand{\Sp}{{\rm Sp}}
\newcommand{\PGSp}{{\rm PGSp}}
\newcommand{\Aut}{{\rm Aut}}
\newcommand{\sgn}{{\rm sgn}}
\newcommand{\vol}{{\mathrm {vol}}}
\newcommand{\Gal}{{\rm Gal}}
\newcommand{\sym}{{\rm sym}}
\newcommand{\vl}{{\rm vol}}
\newcommand{\mat}[4]{{\setlength{\arraycolsep}{0.5mm}\left[
\begin{smallmatrix}#1&#2\\#3&#4\end{smallmatrix}\right]}}
\newcommand{\qed}{\hspace*{\fill}\rule{1ex}{1ex}}
\newcommand{\forget}[1]{}
\def\qdots{\mathinner{\mkern1mu\raise0pt\vbox{\kern7pt\hbox{.}}\mkern2mu
\raise3.4pt\hbox{.}\mkern2mu\raise7pt\hbox{.}\mkern1mu}}
\newenvironment{proof}{\vspace{1ex}\noindent{\it Proof.}\hspace{0.1em}}
	{\hfill\qed\vspace{2ex}}
\newtheorem{lemma}{Lemma.}[section]
\newtheorem{theorem}[lemma]{Theorem.}
\newtheorem{corollary}[lemma]{Corollary.}
\newtheorem{proposition}[lemma]{Proposition.}
\newtheorem{definition}[lemma]{Definition.}
\newtheorem{remark}[lemma]{Remark.}
\begin{document}

\bibliographystyle{plain}
\thispagestyle{empty}
\begin{center}
     {\bf\Large On the standard $L$-function for $\GSp_{2n} \times \GL_1$ and algebraicity of symmetric fourth $L$-values for $\GL_2$}

 \vspace{3ex}
 Ameya Pitale, Abhishek Saha, Ralf Schmidt
\end{center}

\begin{abstract}We prove an  explicit integral representation -- involving the pullback of a suitable Siegel Eisenstein series -- for the twisted standard $L$-function associated to a holomorphic vector-valued Siegel cusp form of degree $n$ and arbitrary level. In contrast to all previously proved pullback formulas in this situation, our  formula involves only scalar-valued functions despite being applicable to $L$-functions of vector-valued Siegel cusp forms.
The key new ingredient in our method is a novel choice of local vectors at the archimedean place which allows us to exactly compute the archimedean local integral.

By specializing our integral representation to the case $n=2$ we are able to prove a
reciprocity law -- predicted by Deligne's conjecture -- for the critical values of the twisted standard $L$-function for vector-valued Siegel cusp forms of degree 2 and arbitrary level. This arithmetic application generalizes previously proved critical-value results for the full level case. By specializing further to the case of Siegel cusp forms obtained via the Ramakrishnan--Shahidi lift, we obtain a reciprocity law for the critical values of the symmetric fourth $L$-function of a classical newform.
\end{abstract}
\tableofcontents

\section{Introduction}
\subsection{Critical \texorpdfstring{$L$}{}-values}The critical values of $L$-functions attached to cohomological cuspidal automorphic representations of algebraic groups are objects of deep arithmetic significance. In particular, it is expected that these values are algebraic numbers up to multiplication by suitable automorphic periods. This is closely related to a  famous conjecture of Deligne \cite{deligneconj} on the algebraicity of critical values of motivic $L$-functions up to suitable periods (however, it is often a non-trivial problem to relate the automorphic periods to Deligne's motivic periods).

The simplest case of algebraicity of critical $L$-values  is the (classical) fact that $\frac{\zeta(2n)}{\pi^{2n}}$ is a rational number for all positive integers $n$ (the Riemann zeta function $\zeta(s)$ being the $L$-function associated to the trivial automorphic representation of $\GL_1$). In the case of $\GL_2$,  Shimura \cite{shi59, shi75, shi76, shimura77} and Manin \cite{Manin} were the first to study the arithmetic of critical $L$-values. For higher rank groups, initial steps were taken by Harris \cite{harsieg} and Sturm \cite{sturm} in 1981, who considered automorphic representations of $\GSp_{2n} \times \GL_{1}$ whose finite part is unramified and whose infinity type is a holomorphic discrete series representation with scalar minimal $K$-type. Since then, there has been considerable work in this area and algebraicity results for $L$-values of automorphic representations on various algebraic groups have been proved; the general problem, however, remains very far from being resolved.

In this paper we revisit the case of the standard $L$-function on $\GSp_{2n} \times \GL_{1}$. As mentioned earlier, this is the first case outside $\GL_2$ that was successfully tackled, with the (independent) results of Harris and Sturm back in 1981. However, the automorphic representations considered therein were very special and corresponded, from the classical point of view, to scalar-valued Siegel cusp forms of full level. Subsequent works on the critical $L$-values of Siegel cusp forms by B\"ocherer \cite{boch1985}, Mizumoto \cite{miz}, Shimura \cite{shibook2},  B\"ocherer--Schmidt \cite{bocherer-schmidt}, Kozima \cite{kozima}, Bouganis \cite{bouganis} and others strengthened and extended these results in several directions. Nonetheless, a proof of algebraicity of critical $L$-values for  holomorphic forms on $\GSp_{2n} \times \GL_{1}$ in full generality has not yet been achieved, \emph{even for $n=2$}.

We prove the following result for $n=2$, which applies to representations whose finite part is arbitrary and whose archimedean part can be (almost) any holomorphic
discrete series.\footnote{We omit the ones that do not contribute to cohomology, namely those with $k_1 \not\equiv k_2 \pmod{2}$.}
\begin{theorem}\label{t:mainintro}
 Let $k_1 \ge k_2 \ge 3$, $k_1 \equiv k_2 \pmod{2}$ be integers. For each cuspidal automorphic representation $\pi$ on $\GSp_4(\A_\Q)$ with $\pi_\infty$ isomorphic to the holomorphic discrete series representation with highest weight $(k_1, k_2)$, there  exists a real number $C(\pi)$ with the following properties.
 \begin{enumerate}
  \item $C(\pi) = C(\pi')$ whenever $\pi$ and $\pi'$ are nearly equivalent.
  \item $C(\pi) = C(\pi  \otimes (\psi\circ\mu))$ for any Dirichlet character $\psi$. Here $\mu: \GSp_4 \rightarrow \GL_1$ denotes the standard multiplier map.
  \item Let $\chi$ be a Dirichlet character  satisfying $\chi(-1) = (-1)^{k_2}$ and $r$ be an integer such that $1 \le r \le k_2-2$, $r \equiv k_2 \pmod{2}$. Furthermore, if $\chi^2=1$, we assume that $r \neq 1$. Then, for any finite subset $S$ of places of $\Q$ including the archimedean place, and any $\sigma \in \Aut(\C)$, we have
  \begin{equation}\label{e:mainarith}
   \sigma \left(\frac{L^S(r,\pi \boxtimes \chi, \varrho_{5})}{(2\pi i)^{3r} G(\chi)^3 C(\pi)}\right) = \frac{L^S(r,{}^\sigma\!\pi \boxtimes{}^\sigma\!\chi, \varrho_{5})}{(2\pi i)^{3r} G({}^\sigma\!\chi)^3 C({}^\sigma\!\pi)},
  \end{equation}
  and therefore in particular
  $$
   \frac{L^S(r,\pi \boxtimes \chi, \varrho_{5})}{(2\pi i)^{3r} G(\chi)^3 C(\pi)} \in \Q(\pi, \chi).
  $$
  Above,  $G(\chi)$  denotes the Gauss sum, $L^S(s,\pi \boxtimes \chi, \varrho_{5})$ denotes the degree $5$ $L$-function (after omitting the local factors in $S$) associated to the representation $\pi \boxtimes \chi$ of $\GSp_4 \times \GL_{1}$, and $\Q(\pi, \chi)$ denotes the (CM) field generated by $\chi$ and the field of rationality for $\pi$.
\end{enumerate}
\end{theorem}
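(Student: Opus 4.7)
My plan is to deduce the theorem from the explicit integral representation announced in the abstract, combined with Shimura's theory of nearly holomorphic Siegel modular forms. Let $F$ be an arithmetically normalized vector in $\pi$, corresponding classically to a vector-valued Siegel cusp form of weight $(k_1,k_2)$ and some level. The pullback formula of the paper expresses, up to elementary factors, the twisted standard $L$-value as
\[
L^S(r,\pi\boxtimes\chi,\varrho_{5}) \;=\; c\cdot \big\langle F,\; \E\big(\iota(\cdot),\,s_0;\,\chi,\,\Phi\big)\big|_{\mathbb{H}_2\times\mathbb{H}_2} \big\rangle,
\]
where $\E(g,s;\chi,\Phi)$ is a Siegel Eisenstein series on $\GSp_8$ induced from the Siegel parabolic with section $\Phi$, $\iota$ is the doubling embedding of $\GSp_4\times\GSp_4$ into $\GSp_8$, $s_0$ is the point corresponding to $r$, and $c$ gathers elementary archimedean and ramified local factors. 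The decisive point is the novel \emph{scalar-valued} choice of $\Phi_\infty$ established in the paper: it reduces the archimedean local integral to a closed-form expression of the shape $(2\pi i)^{3r}\cdot(\text{algebraic})$ with no transcendental remainder, despite $F$ being vector-valued.

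At the critical point $s_0$, the Eisenstein series $\E(g,s_0;\chi,\Phi)$ is nearly holomorphic in Shimura's sense, and its restriction along $\iota$ is a nearly holomorphic Siegel modular form on $\mathbb{H}_2\times\mathbb{H}_2$ whose weight pairs correctly against $F\otimes\overline{F^c}$. By the arithmeticity theorems of Shimura, Harris and B\"ocherer--Schmidt for Siegel Eisenstein series, once one divides by $(2\pi i)^{3r}G(\chi)^3$ the Fourier coefficients of this restriction lie in $\Q(\chi)$ and transform under $\sigma\in\Aut(\C)$ via its natural action on $\chi$. Holomorphic projection (Sturm's operator, extended to the vector-valued setting) then sends the restricted Eisenstein series to a holomorphic cusp form in the same rational structure without altering the Petersson pairing against $F$. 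Hence the right-hand side above equals $\alpha\cdot\langle F,F\rangle$ for some $\alpha\in\overline{\Q}$, and the numerator of \eqref{e:mainarith} becomes algebraic after dividing by the single remaining transcendental factor $\langle F,F\rangle$.

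I would then \emph{define} $C(\pi) := \langle F,F\rangle$ for a fixed choice of arithmetic vector $F$, normalized so as to be canonical on the nearly equivalent class of $\pi$; property (i) holds by construction. Property (ii) follows because twisting $\pi$ by $\psi\circ\mu$ replaces $F$ by $F\cdot(\psi\circ\mu)$, which has the same Petersson norm and shifts the $L$-function only at places inside $S$. Property (iii) is obtained by applying $\sigma$ to the algebraic identity produced in the previous paragraph, using that $\sigma$ commutes with holomorphic projection, permutes Fourier coefficients of $\E$ compatibly with its action on $\chi$ and on the Fourier coefficients of $F$, and preserves the nearly-equivalent equivalence relation used to define $C(\pi)$.

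The main obstacle---and the technical heart of the argument---is the exact archimedean computation and the compatibility between the scalar section $\Phi_\infty$ and the vector-valued automorphic vector $F_\infty$. Earlier work on vector-valued Siegel cusp forms was stymied by the need for matrix-coefficient valued sections at infinity, whose archimedean integrals are hard to pin down up to the correct algebraic factor; the paper's scalar choice of $\Phi_\infty$ is what makes the final calculation tractable while still picking out the right $K_\infty$-type after restriction. A secondary obstacle is the pole structure of $\E$ at $s_0$: this forces the exclusion of the degenerate case $\chi^2=1,\ r=1$ in the statement, and away from it $\E$ is holomorphic at $s_0$ and the argument goes through unchanged.
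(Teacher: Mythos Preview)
Your overall architecture is right---pullback formula, near-holomorphy and arithmeticity of the restricted Eisenstein series, then a Petersson-norm period---but two specific choices in your sketch diverge from what the paper actually does, and the difference is not cosmetic.

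First, you take $F$ to be the holomorphic vector-valued cusp form of weight $(k_1,k_2)$ and then invoke ``holomorphic projection (Sturm's operator, extended to the vector-valued setting).'' The paper does \emph{not} do this. The point of the scalar archimedean section $\Phi_\infty$ is that it is paired against the vector $\phi_\infty$ spanning the one-dimensional \emph{scalar} $K$-type $\rho_{k_1}$ inside $\pi_{\mathbf{k}}$ (Lemma~\ref{scalarKtypeslemma}); classically this makes $F$ a \emph{scalar-valued nearly holomorphic} cusp form of weight $k_1$, and the inner product in Theorem~\ref{classical-integral-repn} is an honest scalar Petersson pairing. To extract arithmeticity one cannot use the usual holomorphic projection, because $F$ is not holomorphic. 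Instead the paper uses the isotypic projection $\mathfrak{p}^\circ_{\ell,m}$ from the structure theorem for nearly holomorphic forms in \cite{PSS14}, which projects onto the subspace of $N_k(\Gamma_4(N))$ generating copies of $\pi_{\mathbf{k}}$ and is $\Aut(\C)$-equivariant. This is exactly why the arithmetic part of the paper is restricted to $n=2$: the required structure theorem is only available there. Your sketch, which treats $F$ as vector-valued and projects holomorphically, either does not make sense as written (a scalar Eisenstein restriction cannot be paired against a vector-valued $F$) or, if reinterpreted, reverts to the earlier vector-valued pullback formulas whose archimedean integrals are precisely what the paper is trying to avoid.

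Second, your definition $C(\pi):=\langle F,F\rangle$ for ``a fixed choice of arithmetic vector $F$, normalized so as to be canonical on the nearly equivalent class'' glosses over real content. Different $\pi'$ in the same near-equivalence class, or twists $\pi\otimes(\psi\circ\mu)$, give different spaces $V(\pi')$, and there is no a~priori reason a single arithmetic $F$ sits in all of them. The paper handles this by first proving a reciprocity law for $\langle F,F\rangle$ with $F\in V(\pi)$ (Theorem~\ref{t:globalthmarithmetic}), then deducing that the ratio $\langle F_1,F_1\rangle/\langle F_2,F_2\rangle$ is $\Aut(\C)$-equivariant whenever $\pi_1\approx\pi_2$ (Corollary~\ref{cor:petersson}), and only then constructing a canonical $G_{[\pi]}$ with Fourier coefficients in $\Q([\pi])$ satisfying ${}^\sigma G_{[\pi]}=G_{[{}^\sigma\pi]}$, setting $C(\pi)=(2\pi i)^{2k}\langle G_{[\pi]},G_{[\pi]}\rangle$. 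Your one-line justification of property (ii) (``$F\cdot(\psi\circ\mu)$ has the same Petersson norm'') is not how this is established.
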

Before going further, we make a few remarks pertaining to the statement above.

\begin{remark}
Note that we can take $S = \{\infty\}$, which gives an algebraicity result for the full finite part of the global $L$-function. This is an important point because some previous works on this topic \cite{bocherer-schmidt, shimura83, shibook2} either omit the bad local $L$-factors,  impose some extra conditions on them, or define these bad factors in an ad-hoc manner. 
\end{remark}

\begin{remark}Our proofs show that $C(\pi) = \pi^{2k_1} \langle F, F\rangle$ where $F$ equals a certain \textbf{nearly holomorphic} modular form of \textbf{scalar} weight $k_1$. Alternatively one can take $C(\pi) = \pi^{k_1+k_2} \langle F_0, F_0\rangle$ where $F_0$ equals a certain holomorphic \textbf{vector-valued} modular form of weight $\det^{k_2}\sym^{k_1-k_2}$.
\end{remark}

\begin{remark}Observe that the $L$-function $L(s,\pi \boxtimes \chi, \varrho_{5})$ considered above is \emph{not} equal to the $L$-function $L(s,\pi \otimes (\chi\circ \mu), \varrho_{5})$. In fact, $L(s,\pi \boxtimes \chi, \varrho_{5})$ cannot be obtained as the $L$-function of an automorphic representation on $\GSp_4(\A_\Q)$, unless $\chi$ is trivial.
\end{remark}

Classically, Theorem \ref{t:mainintro} applies to vector-valued holomorphic Siegel cusp forms of weight $\det^{k_2} \sym^{k_1 - k_2}$ with respect to an arbitrary congruence subgroup of $\Sp_4(\Q)$. The only previously known result for critical $L$-values of holomorphic Siegel cusp forms in the vector-valued case ($k_1 > k_2$) is due to Kozima \cite{kozima}. Kozima's result only applies to full-level Siegel cusp forms, omits some low-weight cases, and also only deals with the case $\chi=1$. In contrast, our theorem, which relies on an adelic machinery to separate out the difficulties place by place, is more general, especially in that it applies to arbitrary congruence subgroups.

We present an application of Theorem \ref{t:mainintro} to  critical values of the symmetric fourth $L$-function of elliptic newforms twisted by odd Dirichlet characters.

\begin{theorem}\label{t:ramshaintro}
 Let $k \ge 2$ be even. For each cuspidal, non-dihedral, automorphic representation $\eta$ on $\PGL_2(\A_\Q)$ with $\eta_\infty$  isomorphic to the holomorphic discrete series representation of lowest weight $k$, there  exists a real number $C(\eta)$ with the following properties.
 \begin{enumerate}
   \item $C(\eta) = C(\eta \otimes \psi)$ for any quadratic Dirichlet character $\psi$.
  \item Let $\chi$ be an odd Dirichlet character and $r$ be an odd integer such that $1 \le r \le k-1$. Furthermore, if $\chi^2=1$, we assume that $r \neq 1$. Then, for any finite subset $S$ of places of $\Q$ that includes the archimedean place, and any $\sigma \in \Aut(\C)$, we have
  \begin{equation}\label{e:mainarith2}
   \sigma \left(\frac{L^S(r,\chi\otimes \sym^4\eta)}{(2\pi i)^{3r} G(\chi)^3 C(\eta)}\right) = \frac{L^S(r,{}^\sigma\!\chi\otimes\sym^4({}^\sigma\!\eta))}{(2\pi i)^{3r} G({}^\sigma\!\chi)^3 C({}^\sigma\!\eta)}.
  \end{equation}
  \end{enumerate}
\end{theorem}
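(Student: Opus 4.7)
The plan is to deduce Theorem \ref{t:ramshaintro} from Theorem \ref{t:mainintro} applied to the Ramakrishnan--Shahidi symmetric cube lift $\pi$ of $\eta$. Because $\eta$ is non-dihedral, the $\sym^3$-transfer produces a cuspidal automorphic representation $\pi$ of $\GSp_4(\A_\Q)$ with trivial central character whose spin $L$-function equals $L(s,\sym^3\eta)$; the descent to $\GSp_4$ (rather than merely $\GL_4$) is possible because $\sym^3:\SL_2(\C)\to\GL_4(\C)$ factors through $\Sp_4(\C)$, the dual group of $\PGSp_4$. A standard archimedean computation, tracking the parameter $z\mapsto\mathrm{diag}((z/\bar z)^{(k-1)/2},(z/\bar z)^{-(k-1)/2})$ of $\eta_\infty$ through $\sym^3$, shows that $\pi_\infty$ is the holomorphic discrete series of $\GSp_4(\R)$ of highest weight $(k_1,k_2)=(2k-1,k+1)$, equivalently that $\pi$ corresponds to a vector-valued Siegel cusp form of weight $\det^{k+1}\sym^{k-2}$. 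Since $k$ is even, $k_1$ and $k_2$ are both odd, $k_1\equiv k_2\pmod2$, and $k_2\ge 3$, so the hypotheses of Theorem \ref{t:mainintro} are met.

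I would next identify $L^S(s,\pi\boxtimes\chi,\varrho_5)=L^S(s,\chi\otimes\sym^4\eta)$. At unramified places this reduces to the representation-theoretic identity $\Lambda^2_0\circ\sym^3\cong\sym^4$ of $\SL_2(\C)$, where $\Lambda^2_0$ denotes the five-dimensional standard representation of $\Sp_4(\C)$ (i.e.\ $\Lambda^2$ of the standard four-dimensional representation with the symplectic invariant removed); this is verified directly by comparing Satake parameters $t^{\pm3},t^{\pm1}$ against $t^{\pm4},t^{\pm2},1$. At the remaining places the identity follows from the compatibility of the Kim--Shahidi symmetric cube transfer with the local Langlands correspondence, together with the functorial definition of the local factors of $\sym^4\eta_v$.

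Now set $C(\eta):=C(\pi)$. With $k_2=k+1$ odd, the conditions ``$\chi(-1)=(-1)^{k_2}$, $r\equiv k_2\pmod 2$, and $1\le r\le k_2-2$'' of Theorem \ref{t:mainintro} collapse exactly to ``$\chi$ odd, $r$ odd, and $1\le r\le k-1$,'' matching the hypotheses here; hence (2) of Theorem \ref{t:ramshaintro} is a direct translation of (iii) of Theorem \ref{t:mainintro}, provided ${}^\sigma\!\pi$ is the symmetric cube lift of ${}^\sigma\!\eta$. For property (1), note that $\psi^3=\psi$ for quadratic $\psi$, so $\sym^3(\eta\otimes\psi)\cong\sym^3\eta\otimes\psi$ on $\GL_4$, and upon descent the $\GL_4$-twist by $\psi$ corresponds to the $\GSp_4$-twist by $\psi\circ\mu$; thus the lift of $\eta\otimes\psi$ is $\pi\otimes(\psi\circ\mu)$, and Theorem \ref{t:mainintro}(ii) gives $C(\pi)=C(\pi\otimes(\psi\circ\mu))$, i.e.\ $C(\eta)=C(\eta\otimes\psi)$.

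The main technical obstacle is the verification of the $L$-function identification and of $\Aut(\C)$-equivariance of the lift at ramified places. For the latter, I would combine the local Langlands correspondence for $\GL_2$ with Clozel's results on the $\sigma$-behaviour of local $L$-factors; the agreement ${}^\sigma\!\pi=\sym^3({}^\sigma\!\eta)$ is first established at almost every place by manifest $\Aut(\C)$-equivariance of the unramified local parameters and then propagated everywhere by strong multiplicity one, exactly as in the proof of Theorem \ref{t:mainintro}.
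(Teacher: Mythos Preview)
Your approach is essentially the same as the paper's: define $C(\eta):=C(\pi)$ for $\pi$ the Ramakrishnan--Shahidi lift, check that $(k_1,k_2)=(2k-1,k+1)$ satisfies the hypotheses of Theorem~\ref{t:mainintro}, and use the identification $L^S(s,\pi\boxtimes\chi,\varrho_5)=L^S(s,\chi\otimes\sym^4\eta)$ to translate Theorem~\ref{t:mainintro} into the stated result. The paper's proof is one line, citing Theorem~A$'$ and Theorem~C of \cite{Ra-Sh} directly for the $L$-function identity (at all places) and the archimedean type, whereas you re-derive these via the representation-theoretic identity $\Lambda^2_0\circ\sym^3\cong\sym^4$ and the explicit Langlands parameter; this is more explanatory but not a genuinely different route.
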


Our proof of Theorem \ref{t:ramshaintro} relies on a result of Ramakrishnan and Shahidi \cite{Ra-Sh} which states that  given an elliptic cuspidal newform of even weight and trivial nebentypus, there exists a holomorphic vector-valued Siegel cusp form of genus $2$ such that the degree $5$ standard $L$-function of the Siegel modular form is equal to the symmetric fourth $L$-function of the elliptic newform. This allows us to derive Theorem \ref{t:ramshaintro} from Theorem \ref{t:mainintro}. One of the reasons to prove Theorem \ref{t:mainintro} with no restrictions on the non-archimedean components is the incomplete information regarding the congruence subgroup associated to the  Siegel modular form in \cite{Ra-Sh}.

Theorem \ref{t:mainintro} follows from an explicit integral representation (Theorem \ref{t:intrepintro} below) for the standard $L$-function $L(s,\pi \boxtimes \chi, \varrho_{2n+1})$ on $\GSp_{2n} \times \GL_1$, which may be viewed as the main technical achievement of this paper.  While Theorem \ref{t:intrepintro} is formally similar to the well-known pullback formula (or doubling method) mechanism, what distinguishes it from previous works is the generality of the setup and the fact that all constants are completely explicit.
We remark here that Theorem \ref{t:intrepintro} applies to any $n$; we restrict to $n=2$ only in Section \ref{nearly-holo-sec} of this paper, where we prove Theorem \ref{t:mainintro}.

In the rest of this introduction we will explain our approach to some of the points mentioned above.
\subsection{Integral representations for \texorpdfstring{$\GSp_{2n} \times \GL_1$}{} and the pullback formula}\label{s:intropullback}
The first integral representation for the standard (degree $2n+1$) $L$-function for automorphic representations of $\GSp_{2n} \times \GL_1$ was discovered by Andrianov and Kalinin \cite{andkal} in 1978. The integral representation of Andrianov--Kalinin applied to holomorphic scalar-valued Siegel cusp forms of even degree $n$ with respect to $\Gamma_0(N)$ type congruence subgroups, and involved a certain theta series.
The results of Harris \cite{harsieg} and Sturm \cite{sturm} mentioned earlier relied on this integral representation.

A remarkable new integral representation, commonly referred to as the pullback formula, was discovered in the early 1980s by Garrett \cite{gar83} and involved pullbacks of Eisenstein series. Roughly speaking, the pullback formula in the simplest form says that \begin{equation}\label{garrpullback}
 \int\limits_{\Sp_{2n}(\Z) \bs \H_n} F(-\overline{Z_1}) E_k\left(\mat{Z_1}{}{}{Z_2}, s    \right) dZ_1\approx L(2s+k-n,\pi,  \varrho_{2n+1})F(Z_2),
\end{equation}
where $F$ is a Siegel cusp form of degree $n$ and full level that is an eigenform for all the Hecke operators, $E_k(Z, s)$ is an Eisenstein series of degree $2n$ and full level (which becomes a holomorphic Siegel modular form of weight $k$ when $s=0$), $L(s,\pi,  \varrho_{2n+1})$ denotes the degree $2n+1$ $L$-function for $\pi$, and the symbol $\approx$ indicates that we are ignoring some unimportant factors.
The pullback formula was applied by B\"ocherer \cite{boch1985} to prove various results about the functional equation and algebraicity of critical $L$-values that went well beyond what had been possible by the Andrianov--Kalinin formula. Subsequently, Shimura generalized the pullback formula to a wide variety of contexts (including other groups). We refer to Shimura's books \cite{shibook1, shibook2} for further details.

In the last two decades, the pullback formula has been used to prove a host of results related to the arithmetic and analytic properties of $L$-functions associated to Siegel cusp forms. However, most of these results involve various kinds of restrictions. To get the most general results possible, it is necessary to extend \eqref{garrpullback} to a) incorporate \emph{characters}, b) include Siegel cusp forms with respect to arbitrary \emph{congruence subgroups}, and c) cover the case of \emph{vector-valued} Siegel cusp forms. While the first two of these objectives have to a large extent been achieved for scalar-valued Siegel cusp forms, the situation with vector-valued forms is quite different. Following the important work of B\"ocherer--Satoh--Yamazaki \cite{bochsatyama}, there have been a few results about vector-valued forms  by Takei \cite{takei92},  Takayanagi \cite{takayanagi93, takayanagi95}, Kozima \cite{kozima, kozima08}, and others. However, all these works are valid only for full level Siegel cusp forms and involve strong restrictions on the archimedean type.\footnote{There is also a survey talk by B\"ocherer (Hakuba, 2012) on the pullback formula for more general vector-valued forms, but it still restricts to full level and does not incorporate characters.} 

On the other hand, Piatetski-Shapiro and Rallis \cite{psrallis83, psrallis87}  discovered a very general  identity (the doubling method) on classical groups. When the group is $\Sp_{2n}$, this is essentially a generalized, adelic version of the pullback formula described above.\footnote{However, at least in the original formulation, the doubling method did not incorporate characters.} However,  Piatetski-Shapiro and Rallis computed the relevant local integrals only at the unramified places (where they chose the vectors to be the unramified vector). Thus, to get a more explicit integral representation in the general case, it is necessary to make specific choices of vectors at the ramified and archimedean primes such that one can exactly evaluate all the associated local integrals. So far, this has been carried out in very few situations.

In this paper, we begin by reproving the basic identity of Piatetski-Shapiro and Rallis in a somewhat different manner that is more convenient for our purposes. Let $\pi$ be a cuspidal automorphic representation of $\GSp_{2n}(\A)$ and
$\chi$ a Hecke character.
 Our central object of investigation is the global integral
\begin{equation}\label{maindefinition1}
 Z(s; f, \phi)(g) = \int\limits_{\Sp_{2n}(F) \bs\,g\cdot\Sp_{2n}(\A)} E^{\chi}((
 h  ,g), s, f) \phi(h)\,dh.
\end{equation}
Here, $E^{\chi}( - ,s, f)$ is an Eisenstein series on $\GSp_{4n}(\A)$ associated to a choice of section $f \in I(\chi, s)$, the pair $(h,g)$ represents an element of $\GSp_{4n}(\A)$ corresponding to the diagonal embedding of elements of $\GSp_{2n}(\A) \times  \GSp_{2n}(\A)$ with the same multiplier, and $\phi$ is an automorphic form in the space of $\pi$. The integral \eqref{maindefinition1} represents a meromorphic function of $s$ on all of $\C$ due to the analytic properties of the Eisenstein series. As we will prove, away from any poles, the function $g\mapsto Z(s; f, \phi)(g)$ is again an \emph{automorphic form in the space of $\pi$.}

Next, assume that $\phi = \otimes_v \phi_v$ and $f=\otimes_v f_v$ are factorizable. We define, for each place $v$, \emph{local zeta integrals}
\begin{equation}\label{Rsflocaleq1}
 Z_v(s;f_v,\phi_v):=\int\limits_{\Sp_{2n}(F_v)} f_v(Q_n \cdot (h, 1), s) \pi_v(h)\phi_v\,dh,
\end{equation}
where $Q_n$ is a certain explicit matrix in $\GSp_{4n}(\A)$. It turns out (see Proposition \ref{interopprop}) that $Z_v(s;f_v,\phi_v)$ converges to an element in the space of $\pi_v$ for real part of $s$ large enough.

Our ``Basic Identity'' (Theorem \ref{basicidentity}) asserts that the automorphic form $Z(s;f, \phi)$ corresponds to the pure tensor $\otimes_v Z_v(s;f_v, \phi_v)$.
Now assume that, for all $v$, the vectors $\phi_v$ and the sections $f_v$ can be chosen in such a way that $Z_v(s,f_v,\phi_v)=c_v(s)\phi_v$ for an explicitly computable function $c_v(s)$. Then we obtain a \emph{global integral representation}
\begin{equation}\label{globalintrepeq1}
 Z(s; f, \phi)(g)=\Big(\prod_vc_v(s)\Big)\phi.
\end{equation}
In this way the Euler product $\prod_vc_v(s)$, convergent for ${\rm Re}(s)$ large enough, inherits analytic properties from the left hand side of \eqref{globalintrepeq1}. If this Euler product, up to finitely many factors and up to ``known'' functions, represents an interesting $L$-function, one can thus derive various properties of said $L$-function.

Our main local task is therefore to choose the vectors $\phi_v$ and the sections $f_v$ such that $Z_v(s,f_v,\phi_v)=c_v(s)\phi_v$ for an explicitly computable function $c_v(s)$. For a ``good'' non-archime\-dean place $v$ we will make the obvious unramified choices. The \emph{unramified calculation}, Proposition \ref{unramifiedcalculationprop}, then shows that
\begin{equation}\label{unramcalceq}
 c_v(s)=\frac{L((2n+1)s+1/2,\pi_v \boxtimes \chi_v, \varrho_{2n+1})}{L((2n+1)(s+1/2), \chi_v)\prod\limits_{i=1}^nL((2n+1)(2s+1)-2i, \chi_v^2)}.
\end{equation}
If $v$ is non-archimedean and some of the data is \emph{ramified}, it is possible to choose $\phi_v$ and $f_v$ such that $c_v(s)$ is a non-zero constant function; see Proposition \ref{prop:badplaces}. The idea is to choose $\phi_v$ to be invariant under a small enough principal congruence subgroup, and make the support of $f_v$ small enough. This idea was inspired by \cite{morimoto}.

\subsection{The choice of archimedean vectors and our main formula}
We  now explain our choice of $\phi_v$ and $f_v$ at a real place $v$, which represents one of the main new ideas of this paper. We  only treat the case of $\pi_v$ being a holomorphic discrete series representation, since this is sufficient for our application to Siegel modular forms. Assume first that $\pi_v$ has scalar minimal $K$-type of weight $k$, i.e., $\pi_v$ occurs as the archimedean component attached to a classical Siegel cusp form of weight $k$. Then it is natural to take for $\phi_v$ a vector of weight $k$ (spanning the minimal $K$-type of $\pi_v$), and for $f_v$ a vector of weight $k$ in $I(\chi_v,s)$. Both $\phi_v$ and $f_v$ are unique up to scalars. The local integral in this case is calculated in Proposition \ref{scalarminKtypeprop} and turns out to be an exponential times a rational function. The calculation is made possible by the fact that we have a simple formula for the matrix coefficient of $\phi_v$; see \eqref{matrixcoeffeq}.

Now assume that $\pi_v$ is a holomorphic discrete series representation with more general minimal $K$-type $\rho_{(k_1,\ldots,k_n)}$, where $k_1\geq\ldots\geq k_n>n$; see Sect.~\ref{holdiscsec} for notation. In this case we will \emph{not} work with the minimal $K$-type, but with the scalar $K$-type $\rho_{(k,\ldots,k)}$, where $k:=k_1$. We will show in Lemma \ref{scalarKtypeslemma} that $\rho_{(k,\ldots,k)}$ occurs in $\pi_v$ with multiplicity one. Let $\phi_v$ be the essentially unique vector spanning this $K$-type, and let $f_v$ again be the vector of weight $k$ in $I(\chi_v,s)$. The function $c_v(s)$ in this case is again an exponential times a rational function; see Proposition \ref{archzetaprop}. We note that our calculation for general minimal $K$-type uses the result of the scalar case, so the scalar case cannot be subsumed in the more general case. One difficulty in the general case is that we do not know a formula for the matrix coefficient of $\phi_v$, or in fact \emph{any} matrix coefficient. Instead, we use a completely different method, realizing $\phi_v$ as a vector in an induced representation.

Our archimedean calculations require two different integral formulas, which are both expressions for the Haar measure on $\Sp_{2n}(\R)$. The first formula, used in the scalar minimal $K$-type case, is with respect to the $KAK$ decomposition; see \eqref{KAKintegrationeq}. The second formula, used in the general case, is with respect to the Iwasawa decomposition; see \eqref{Sp2niwasawainteq2}. We will always normalize the Haar measure on $\Sp_{2n}(\R)$ in the ``classical'' way, characterized by the property \eqref{Ghaarpropeq1}. It is necessary for us to determine the precise constants relating the $KAK$ measure and the Iwasawa measure to the classical normalization. This will be carried out in Appendix \ref{measureapp}.

Finally, combining the archimedean and the non-archimedean calculations, we obtain an explicit formula for the right hand side of \eqref{globalintrepeq1}, which is our aforementioned pullback formula for $L$-functions on $\GSp_{2n} \times \GL_1$. This is Theorem \ref{global-thm} in the main body of the paper; below we state a rough version of this theorem.
\begin{theorem}\label{t:intrepintro}Let $\pi\cong\otimes \pi_p$ be an irreducible cuspidal automorphic representation of \linebreak $\GSp_{2n}(\A_\Q)$ such that $\pi_\infty$ is a holomorphic discrete series representation of highest weight $(k_1,k_2, \ldots, k_n)$, where $k=k_1\ge\ldots\ge k_n>n$ and all $k_i$ have the same parity. Let $\chi=\otimes\chi_p$ be a character of $\Q^\times \bs \A^\times$ such that $\chi_\infty = \sgn^k$.  Then we have the identity
\begin{align*}\label{global-int-formula}
  Z(s, f, \phi)(g) =  &\frac{L^S((2n+1)s+1/2,\pi \boxtimes \chi, \varrho_{2n+1})}{L^S((2n+1)(s+1/2), \chi)\prod_{j=1}^nL^S((2n+1)(2s+1)-2j, \chi^2)} \nonumber \\ &\times  i^{nk}\,\pi^{n(n+1)/2}  c((2n+1)s- 1/2)\phi(g),
 \end{align*}
  where $c(z)$ is an explicit function that takes non-zero rational values for integers $z$ with $0 \le z \le k_n-n$ and
  where $L^{S}(\ldots)$ denotes the global $L$-functions without the local factors corresponding to a certain (explicit) finite set of places $S$.
\end{theorem}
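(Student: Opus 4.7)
The plan is to derive the formula by specializing the Basic Identity (Theorem \ref{basicidentity}) to factorizable data and then multiplying together the local zeta integrals place by place. Once we have chosen, at every place $v$, vectors $\phi_v$ and sections $f_v$ for which $Z_v(s; f_v, \phi_v) = c_v(s)\phi_v$ with $c_v(s)$ explicitly computable, the global integral identity \eqref{globalintrepeq1} immediately gives $Z(s; f, \phi)(g) = \big(\prod_v c_v(s)\big)\phi(g)$, and the theorem reduces to an evaluation of this Euler product.

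For the local choices I would proceed as follows. At every non-archimedean place $v$ outside some finite set $S_0$, take $\phi_v$ and $f_v$ to be the standard spherical vectors; Proposition \ref{unramifiedcalculationprop} then gives the unramified ratio \eqref{unramcalceq}, and multiplying these over all such $v$ produces exactly the quotient $L^S((2n+1)s+1/2, \pi\boxtimes\chi, \varrho_{2n+1}) / \big(L^S((2n+1)(s+1/2), \chi)\prod_{j=1}^n L^S((2n+1)(2s+1) - 2j, \chi^2)\big)$. At the remaining non-archimedean places, which are absorbed into $S = S_0 \cup \{\infty\}$, take $\phi_v$ invariant under a sufficiently small principal congruence subgroup and $f_v$ of sufficiently small support, so that Proposition \ref{prop:badplaces} yields a non-zero constant $c_v(s)$. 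At the archimedean place, take $\phi_\infty$ to span the scalar $K$-type $\rho_{(k,\ldots,k)}$ inside $\pi_\infty$, which occurs with multiplicity one by Lemma \ref{scalarKtypeslemma}, and take $f_\infty$ to be the weight-$k$ vector in $I(\chi_\infty, s)$. Proposition \ref{archzetaprop} then furnishes $c_\infty(s) = i^{nk}\pi^{n(n+1)/2} c((2n+1)s - 1/2)$ with the stated rational function $c(z)$. Assembling the three ingredients yields the formula claimed in the theorem.

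The main obstacle is the archimedean evaluation when the minimal $K$-type of $\pi_\infty$ is genuinely vector-valued (i.e.\ $k_1 > k_n$). In the scalar minimal $K$-type case one has an explicit formula \eqref{matrixcoeffeq} for the matrix coefficient of $\phi_\infty$, and the $KAK$ integration formula \eqref{KAKintegrationeq} reduces $Z_\infty(s, f_\infty, \phi_\infty)$ to a one-variable integral that can be computed directly (this is handled by Proposition \ref{scalarminKtypeprop}). In the general case no such matrix coefficient formula is available, and in fact we do not have any usable matrix coefficient at all. The idea is to abandon matrix coefficients entirely: having chosen $\phi_\infty$ in the scalar $K$-type $\rho_{(k,\ldots,k)}$, realize $\phi_\infty$ as a vector in a suitable induced representation, switch to the Iwasawa integration formula \eqref{Sp2niwasawainteq2}, and bootstrap the scalar calculation already performed. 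For this bootstrapping to produce the precise constants appearing in the theorem, one needs the exact ratios between the $KAK$ measure, the Iwasawa measure, and the classical Haar normalization characterised by \eqref{Ghaarpropeq1}, which are established in Appendix \ref{measureapp}.

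Finally, I would verify that the overall constant and the rational function $c(z)$ coming out of the archimedean computation match the statement, and confirm that the set $S$ can indeed be taken to consist exactly of the archimedean place together with the finitely many non-archimedean places where either $\pi_v$, $\chi_v$, or the chosen $f_v$ fails to be spherical. The non-vanishing and rationality of $c(z)$ on the stated range of integers $0 \le z \le k_n - n$ is a direct consequence of its explicit shape produced by Proposition \ref{archzetaprop}; I would record this as a short corollary of the archimedean computation rather than as a separate argument.
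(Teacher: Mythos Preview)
Your proposal is correct and follows essentially the same route as the paper: Theorem \ref{t:intrepintro} is the rough introductory version of Theorem \ref{global-thm}, whose proof simply combines the Basic Identity (Theorem \ref{basicidentity}) with the local computations at unramified places (Proposition \ref{unramifiedcalculationprop}), ramified places (Proposition \ref{prop:badplaces}), and the archimedean place (Proposition \ref{archzetaprop}), exactly as you outline. Your discussion of the archimedean bootstrapping from the scalar to the general case via the induced model and Iwasawa integration also matches the paper's treatment in Lemmas \ref{Blambdanlemma1}--\ref{Blambdanlemma2}, and your remark on the rationality of $c(z)$ corresponds to Remark \ref{rem:Akrational}.
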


The above formula can be reformulated in a classical language, which takes a similar form to \eqref{garrpullback} and involves functions $F(Z)$ and $E_{k,N}^\chi(Z, s)$ that correspond to $\phi$ and  $E^{\chi}( - ,s,f)$ respectively. In fact $F$ and $E_{k,N}^\chi( - , s)$ are (scalar-valued) smooth modular forms of weight $k$ (and degrees $n$ and $2n$ respectively) with respect to suitable congruence subgroups. We refer the reader to Theorem \ref{classical-integral-repn} for the full classical statement.  In  all previously proved classical pullback formulas \cite{bochsatyama, takayanagi93, takayanagi95} for $L(s, \pi \boxtimes \chi)$ with $\pi_\infty$ a general discrete series representation, the analogues of $F$ and $E_{k,N}^\chi( - , s)$ were vector-valued objects; in contrast, our formula involves only  scalar-valued functions. This is a key point of the present work.

We hope that our  pullback formula will be useful for arithmetic applications beyond what we pursue  in this paper. A particularly fruitful direction might be towards congruence primes and the Bloch-Kato conjecture, extending work of Agarwal, Berger, Brown, Klosin, and others. Initial steps towards this application have already been made by us in \cite{PSS19} where we build upon the results of this paper, and prove $p$-integrality and cuspidality of pullbacks of the Eisenstein series $E_{k,N}^\chi(Z, s)$. It also seems worth mentioning here the recent work of Zheng Liu \cite{Liuthesis} who uses the doubling method for vector-valued Siegel
modular forms and constructs a $p$-adic $L$-function.
\subsection{Nearly holomorphic modular forms and  arithmeticity}
To obtain results about the algebraicity of critical $L$-values, we delve deeper into the arithmetic nature of the two smooth modular forms given above. The general arithmetic principle here is that whenever a smooth modular/automorphic form is holomorphic, or close to being holomorphic, it is likely to have useful arithmetic properties.  In this case, if $0 \le m_0 \le \frac{k - n -1}{2}$ is an integer, then Shimura has proved that $E_{k,N}^\chi(Z, -m_0)$ is a \emph{nearly holomorphic} Siegel modular form (of degree $2n$) with nice arithmetic properties.

The next step is  to prove that the inner product of $F(Z)$ and $E_{k,N}^\chi( Z , -m_0)$  is $\Aut(\C)$ equivariant. It is here that we are forced to assume $n=2$. In this case, our recent investigation of lowest weight modules \cite{PSS14} of $\Sp_4(\R)$ and in particular the ``structure theorem" proved therein allows us to define an $\Aut(\C)$ equivariant isotypic projection map from the space of \emph{all} nearly holomorphic modular forms to the subspace of cusp forms corresponding to a particular infinity-type. Once this is known, Theorem \ref{t:mainintro} follows by a standard linear algebra argument going back at least to Garrett \cite{gar2}.

It is worth contrasting our approach here with previously proved results on the critical $L$-values of holomorphic vector-valued Siegel cusp forms such as the result of Kozima \cite{kozima} mentioned earlier. In Kozima's work, the modular forms involved in the integral representation are vector-valued and the cusp form holomorphic; ours involves two scalar-valued modular forms that are not holomorphic. Our approach allows us to incorporate everything smoothly into an adelic setup and exactly evaluate the archimedean integral. But the price we pay is that the arithmeticity of the automorphic forms is not  automatic (as we cannot immediately appeal to the arithmetic geometry inherent in holomorphic modular forms). In particular, this is the reason we are forced to restrict ourselves to $n=2$ in the final section of this paper, where we prove Theorem \ref{t:mainintro}. We expect that an analogue of the structure theorem for nearly holomorphic forms proved in \cite{PSS14} for $n=2$ should continue to hold for general $n$. This is the topic of ongoing work of the authors and will lead to an extension of Theorem \ref{t:mainintro} for any $n$.

\subsection{Acknowledgements} We would like to thank A. Raghuram for helpful comments regarding the subject of this paper. A.S. acknowledges the support of the EPSRC grant EP/L025515/1.
\section{Preliminaries}
\subsection{Basic notations and definitions}\label{basicnotdefsec}
Let $F$ be a totally real algebraic number field and $\A$ the ring of adeles of $F$. For a positive integer $n$ let $G_{2n}$ be the algebraic $F$-group $\GSp_{2n}$, whose $F$-points are given by
$$
 G_{2n}(F)=\{g\in\GL_{2n}(F)\,:\,^tgJ_ng=\mu_n(g)J_n,\:\mu_n(g)\in F^\times\},\qquad J_n=\mat{}{I_n}{-I_n}{}.
$$
The symplectic group $\Sp_{2n}$ consists of those elements $g\in G_{2n}$ for which the multiplier $\mu_n(g)$ is $1$. Let $P_{2n}$ be the Siegel parabolic subgroup of $G_{2n}$, consisting of matrices whose lower left $n\times n$-block is zero. Let $\delta_{P_{2n}}$ be the modulus character of $P_{2n}(\A)$. It is given by
\begin{equation}\label{modulus-char}
 \delta_{P_{2n}}(\mat{A}{X}{}{v\,^t\!A^{-1}}) = |v^{-\frac{n(n+1)}2}\det(A)^{n+1}|, \qquad \text{ where } A \in \GL_n(\A),\: v \in \GL_1(\A),
\end{equation}
and $|\cdot|$ denotes the global absolute value, normalized in the standard way.

Fix the following embedding of $H_{2a,2b} := \{(g, g') \in G_{2a} \times G_{2b}: \mu_a(g) = \mu_b(g') \}$ in $G_{2a+2b}$:
\begin{equation}\label{embedding-defn}
H_{2a,2b} \ni (\mat{A_1}{B_1}{C_1}{D_1}, \mat{A_2}{B_2}{C_2}{D_2}) \longmapsto \left[\begin{smallmatrix}A_1&&-B_1\\&A_2&&B_2\\-C_1&&D_1\\&C_2&&D_2\end{smallmatrix}\right] \in \GSp_{2a+2b}.
\end{equation}
We will also let $H_{2a,2b}$ denote its image in $G_{2a+2b}$.

Let $G$ be any reductive algebraic group defined over $F$. For a place $v$ of $F$ let $(\pi_v,V_v)$ be an \emph{admissible representation} of $G(F_v)$. If $v$ is non-archimedean, then this means that every vector in $V_v$ is smooth, and that for every open-compact subgroup $\Gamma$ of $G(F_v)$ the space of fixed vectors $V_v^\Gamma$ is finite-dimensional. If $v$ is archimedean, then it means that $V_v$ is an admissible $(\mathfrak{g},K)$-module, where $\mathfrak{g}$ is the Lie algebra of $G(F_v)$ and $K$ is a maximal compact subgroup of $G(F_v)$. We say that $\pi_v$ is \emph{unitary} if there exists a $G(F_v)$-invariant (non-archimedean case) resp.\ $\mathfrak{g}$-invariant (archimedean case) hermitian inner product on $V_v$. In this case, and assuming that $\pi_v$ is irreducible, we can complete $V_v$ to a unitary Hilbert space representation $\bar V_v$, which is unique up to unitary isomorphism. We can recover $V_v$ as the subspace of $\bar V_v$ consisting of smooth (non-archimedean case) resp.\ $K$-finite (archimedean case) vectors.

We define automorphic representations as in \cite{BorelJacquet1979}. In particular, when we say ``automorphic representation of $G(\A)$'', we understand that at the archimedean places we do not have actions of $G(F_v)$, but of the corresponding $(\mathfrak{g},K)$-modules. All automorphic representations are assumed to be irreducible. Cuspidal automorphic representations are assumed to be unitary. Any such representation $\pi$ is isomorphic to a restricted tensor product $\otimes\pi_v$, where $\pi_v$ is an irreducible, admissible, unitary representation of $G(F_v)$.

For a place $v$ of $F$, let $\sigma,\chi_1,\cdots, \chi_n$ be characters of $F_v^\times$. We denote by $\chi_1\times \cdots \times \chi_n\rtimes\sigma$ the representation of $G_{2n}(F_v)$ parabolically induced from the character
\begin{equation}\label{GnBorelinducedeq}
 \left[\begin{smallmatrix}
  a_1&\cdots&*&*&\cdots&*\\&\ddots&\vdots&\vdots&&\vdots\\&&a_n&*&\cdots&*\\&&&a_0 a_1^{-1}&&\\&&&\vdots&\ddots&\\&&&*&\cdots&a_0a_n^{-1}
 \end{smallmatrix}\right]
 \longmapsto\chi_1(a_1)\cdot\ldots\cdot\chi_n(a_n)\sigma(a_0)
\end{equation}
of the standard Borel subgroup of $G_{2n}(F_v)$. Restricting all functions in the standard model of this representation to $\Sp_{2n}(F_v)$, we obtain a Borel-induced representation of $\Sp_{2n}(F_v)$ which is denoted by $\chi_1\times\ldots\times\chi_n\rtimes1$.

We also define parabolic induction from $P_{2n}(F_v)$. Let $\chi$ and $\sigma$ be characters of $F_v^\times$. Then $\chi\rtimes\sigma$ is the representation of $G_{2n}(F_v)$ parabolically induced from the character
\begin{equation}\label{GnSiegelinducedeq}
 \mat{A}{*}{}{u\,^t\!A^{-1}}\longmapsto\sigma(u)\chi(\det(A))
\end{equation}
of $P_{2n}(F_v)$. The center of $G_{2n}(F_v)$ acts on $\chi\rtimes\sigma$ via the character $\chi^n\sigma^2$. Restricting the functions in the standard model of $\chi\rtimes\sigma$ to $\Sp_{2n}(F_v)$, we obtain the Siegel-induced representation of $\Sp_{2n}(F_v)$ denoted by $\chi\rtimes1$.

We fix a Haar measure on $\Sp_{2n}(\R)$, as follows. Let $\H_n$ be the Siegel upper half space of degree $n$, consisting of all complex, symmetric $n\times n$ matrices $X+iY$ with $X,Y$ real and $Y$ positive definite. The group $\Sp_{2n}(\R)$ acts transitively on $\H_n$ in the usual way. The stabilizer of the point $I:=i1_n\in\H_n$ is the maximal compact subgroup $K=\Sp_{2n}(\R)\cap{\rm O}(2n)\cong U(n)$. We transfer the classical $\Sp_{2n}(\R)$-invariant measure on $\H_n$ to a left-invariant measure on $\Sp_{2n}(\R)/K$. We also fix a Haar measure on $K$ so that $K$ has volume $1$. The measures on $\Sp_{2n}(\R)/K$ and $K$ define a unique Haar measure on $\Sp_{2n}(\R)$. Let $F$ be a measurable function on $\Sp_{2n}(\R)$ that is right $K$-invariant. Let $f$ be the corresponding function on $\H_n$, i.e., $F(g)=f(gI)$. Then, by these definitions,
\begin{equation}\label{Ghaarpropeq1}
 \int\limits_{\Sp_{2n}(\R)}F(h)\,dh=\int\limits_{\H_n}f(Z)\det(Y)^{-(n+1)}\,dX\,dY.
\end{equation}
We shall always use the Haar measure on $\Sp_{2n}(\R)$ characterized by the property \eqref{Ghaarpropeq1}.

Haar measures on $\Sp_{2n}(F)$, where $F$ is a non-archimedean field with ring of integers $\OF$, will be fixed by requiring that the open-compact subgroup $\Sp_{2n}(\OF)$ has volume $1$. The Haar measure on an adelic group $\Sp_{2n}(\A)$ will always be taken to be the product measure of all the local measures.

\subsection{Some coset decompositions}
For $0 \le r \le n$, let $\alpha_r \in \Sp_{4n}(\Q)$ be the matrix
\begin{equation}\label{alphardefeq}
 \alpha_r = \left[\begin{smallmatrix}I_n &0&0&0 \\ 0& I_n&0&0\\ 0&\widetilde{I_r}&I_n&0\\ \widetilde{I_r}&0&0&I_n \end{smallmatrix}\right],
\end{equation}
where the $n\times n$ matrix $\widetilde{I_r}$ is given by  $\widetilde{I_r}= \mat{0_{n-r}}{0}{0}{I_r}$.

\begin{proposition}\label{doublecosetdec}
 We have the double coset decomposition
 $$
  G_{4n}(F) = \bigsqcup_{r=0}^n  P_{4n}(F)\alpha_r H_{2n,2n}(F).
 $$
\end{proposition}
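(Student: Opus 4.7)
The plan is to give a geometric proof by identifying $P_{4n}(F)\backslash G_{4n}(F)$ with the set $\mathcal{L}$ of Lagrangian subspaces of the symplectic space $V := F^{4n}$ (equipped with the form $J_{2n}$), via the bijection $P_{4n}(F) g \mapsto g^{-1} L_0$ where $L_0 := \mathrm{span}(e_1,\ldots,e_{2n})$ is the standard Lagrangian stabilized by $P_{4n}(F)$. Under this bijection, the double cosets $P_{4n}(F)\backslash G_{4n}(F)/H_{2n,2n}(F)$ correspond to $H_{2n,2n}(F)$-orbits on $\mathcal{L}$. A direct inspection of the embedding \eqref{embedding-defn} shows that $H_{2n,2n}$ preserves an orthogonal symplectic decomposition $V = V_1 \perp V_2$, where $V_1 := \mathrm{span}(e_1,\ldots,e_n,e_{2n+1},\ldots,e_{3n})$ and $V_2$ is the span of the remaining basis vectors; each $V_j$ is non-degenerate of dimension $2n$, and (modulo an inner conjugation by $\mathrm{diag}(I_n,-I_n)$ on the first factor) the embedded action is the diagonal action of $\GSp_{2n}\times\GSp_{2n}$ with matched similitudes.

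Next I would introduce the orbit invariant $d(L) := \dim_F(L \cap V_1)$. Since $L \cap V_1$ is isotropic in $V_1$, we have $0 \le d(L) \le n$; moreover, $V_1^\perp = V_2$ gives $(L + V_1)^\perp = L \cap V_2$, whence a dimension count yields $\dim(L \cap V_2) = \dim(L \cap V_1)$, so no independent invariant arises from $V_2$. Using $\alpha_r^{-1} = I - N$ (valid because $\alpha_r = I + N$ with $N^2 = 0$), a short direct computation gives $\alpha_r^{-1} L_0 \cap V_1 = \mathrm{span}(e_1,\ldots,e_{n-r})$, so $d(\alpha_r^{-1} L_0) = n - r$. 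As $r$ ranges over $\{0,1,\ldots,n\}$, the invariant $d$ takes all admissible values, proving the $n+1$ cosets $P_{4n}(F) \alpha_r H_{2n,2n}(F)$ are pairwise disjoint.

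The remaining and most substantive step is to show that $d$ is a complete invariant. Given $L, L' \in \mathcal{L}$ with $d(L) = d(L') = n - r$, I would first invoke Witt's extension theorem inside each $V_j$ to produce $(g_1,g_2) \in \Sp_{2n}(F) \times \Sp_{2n}(F) \subset H_{2n,2n}(F)$ sending $L \cap V_j$ to $L' \cap V_j$, reducing to the case $L \cap V_j = L' \cap V_j =: K_j$. Then both $L$ and $L'$ are contained in $K_1^{\perp_1} \oplus K_2^{\perp_2}$, and their images modulo $K_1 \oplus K_2$ are graphs of ``anti-symplectic'' isomorphisms $\phi,\phi' \colon K_1^{\perp_1}/K_1 \to K_2^{\perp_2}/K_2$ between the induced non-degenerate symplectic quotients of dimension $2n-2r$. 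The Levi factor of the parabolic stabilizer of $K_j$ in $\Sp(V_j)$ surjects onto $\Sp(K_j^{\perp_j}/K_j)$, so adjusting $(g_1,g_2)$ within these Levi factors would allow me to match $\phi$ with $\phi'$ and thereby carry $L$ to $L'$. The principal obstacle lies in precisely this gluing step: one must carefully identify the induced symplectic forms on the quotient spaces and verify the surjectivity of the Levi action onto the corresponding symplectic group. Once this is established, the argument reduces to standard Witt-theoretic linear algebra.
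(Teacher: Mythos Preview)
Your argument is correct. The invariant $d(L)=\dim(L\cap V_1)$ is well-defined and $H_{2n,2n}$-stable, the computation $d(\alpha_r^{-1}L_0)=n-r$ is accurate, and the completeness step via Witt's theorem is sound: once $L\cap V_j=L'\cap V_j=K_j$, the images of $L,L'$ in $(K_1^{\perp_1}/K_1)\oplus(K_2^{\perp_2}/K_2)$ are graphs of anti-symplectic isomorphisms $\phi,\phi'$, and $\phi'\circ\phi^{-1}$ lies in $\Sp(K_2^{\perp_2}/K_2)$, which is exactly the image of the $\Sp_{2r}$-factor of the Levi $\GL_{n-r}\times\Sp_{2r}$ of the parabolic stabilizing $K_2$. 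So the ``principal obstacle'' you flag is in fact already resolved by the standard structure of maximal parabolics in $\Sp_{2n}$. Note also that every step can be carried out inside $\Sp_{2n}(F)\times\Sp_{2n}(F)\subset H_{2n,2n}(F)$, so the matched-similitude constraint never interferes.

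The paper, by contrast, gives no argument at all: it simply cites Shimura~\cite[Prop.~2.4]{shibook1}. Your proof is therefore not the paper's proof but rather a self-contained reconstruction of (essentially) what Shimura does---his argument is likewise a linear-algebraic analysis of the relative position of a Lagrangian with respect to the splitting $V=V_1\perp V_2$, though phrased in terms of matrix representatives rather than the Lagrangian Grassmannian. What your approach buys is transparency: the geometric viewpoint makes clear why there are exactly $n+1$ orbits and why $r$ is the natural parameter, whereas a purely matrix-based argument tends to obscure this. What the citation buys is brevity, which is all the authors need here since the proposition is only a stepping stone to the basic identity.
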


\begin{proof}
This follows from Shimura~\cite[Prop.\ 2.4]{shibook1}.
\end{proof}

For our purposes, it is nicer to work with the coset representatives $Q_r := \alpha_r \cdot (I_{4n-2r}, J_r)$ where $J_r = \mat{}{I_r}{-I_r}{}$. It is not hard to see that $(I_{4n-2r}, J_r) \in H_{4n-2r,2r}$ is actually an element of $H_{2n,2n} $, so that
$$
 P_{4n}(F)\alpha_r H_{2n,2n}(F) =  P_{4n}(F)Q_r H_{2n,2n}(F).
$$
One can write down the matrix $Q_r$ explicitly as follows,
\begin{equation}\label{Qrdefeq}
 Q_r = \left[\begin{smallmatrix}I_n &0&0&0 \\ 0& I'_{n-r}&0&\widetilde{I_r}\\ 0&0&I_n&\widetilde{I_r}\\ \widetilde{I_r}&-\widetilde{I_r}&0&I'_{n-r} \end{smallmatrix}\right],
\end{equation}
where $I'_{n-r}=I_n-\tilde I_r=\mat{I_{n-r}}{0}{0}{0}$.

For $0 \le r \le n$, let $P_{2n, r}$ be the maximal parabolic subgroup (proper if $r \neq n$) of $G_{2n}$ consisting of matrices whose lower-left $(n+r) \times (n-r)$ block is zero. Its Levi component is isomorphic to $\GL_{n-r}\times G_r$. Note that $P_{2n,0} = P_{2n}$ and $P_{2n,n} = G_{2n}$. Let $N_{2n,r}$ denote the unipotent radical of $P_{2n, r}$.

The next lemma expresses the reason why $\{Q_r\}$ is more convenient than  $\{\alpha_r\}$ for the double coset representatives. Let $d:P_{4n}\to\GL_1$ be the homomorphism defined by $$d(p)=v^{-n}\det(A)$$ for $p=\mat{A}{}{}{v\,^t\!A^{-1}}n'$ with $v\in\GL_1, A\in\GL_{2n}$ and $n' \in N_{4n,0}$. Note that $\delta_{P_{4n}}(p)=|d(p)|^{2n+1}$ for $p\in P_{4n}(\A)$.
\begin{lemma}\label{lemmaqrprop}
 \begin{enumerate}
  \item Let $0 \leq r < n$ and suppose that
   $$
    p_1 =   \left[\begin{smallmatrix}g_1\\&I_r&&&\\&&\,^tg_1^{-1}\\&&&&I_r\end{smallmatrix}\right]\cdot n_1,\quad  p_2 =   \left[\begin{smallmatrix}g_2\\&I_r&&&\\&&\,^tg_2^{-1}\\&&&&I_r\end{smallmatrix}\right]\cdot n_2, \quad g_1, g_2\in\GL_{n-r}, \quad n_1, n_2\in N_{2n,r}$$      are certain elements of $P_{2n, r}$. Then the matrix $X = Q_r(p_1,p_2) Q_r^{-1}$ lies in $P_{4n}$ and satisfies $d(X)=\det(g_1)\det(g_2)$.
  \item  Let $0 \leq r < n$. Let $x_1\in\Sp_{2r}$ and $x=(1,x_1)\in\Sp_{2n}$. Then the matrix $X = Q_r(x,x) Q_r^{-1}$ lies in $P_{4n}$ and satisfies $d(X) = 1$.
  \item Let $g \in G_{2n}$. Then the matrix $X = Q_n(g,g) Q_n^{-1}$ lies in $P_{4n}$ and satisfies $d(X) = 1$.
\end{enumerate}
\end{lemma}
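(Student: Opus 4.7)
The plan is to establish each statement by direct block-matrix calculation, using the explicit form \eqref{Qrdefeq} of $Q_r$ together with the embedding \eqref{embedding-defn} of $H_{2n,2n}$ in $G_{4n}$. Membership in $P_{4n}$ amounts to showing that the lower-left $2n\times 2n$ block of $X=Q_r(p,q)Q_r^{-1}$ vanishes; once that is verified, the value of $d(X)$ can be read off from the upper-left $2n\times 2n$ block together with the multiplier. The conceptual content is that the coset representatives $Q_r$ are chosen so that the subgroup of $H_{2n,2n}$ described in parts (i)--(iii) is precisely (an image of) the stabilizer $Q_r^{-1}P_{4n}Q_r\cap H_{2n,2n}$, which is why these particular elements conjugate into $P_{4n}$.

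For part (i), I would reduce to two cases by writing $p_i = m_i n_i$ with $m_i$ the Levi element and $n_i\in N_{2n,r}$. In the Levi case, conjugation $Q_r \cdot (m_1,m_2)\cdot Q_r^{-1}$ only mixes the ``$\GL_{n-r}$-block'' rows and columns with the rows and columns associated to the $\widetilde{I_r}$ positions, and these end up strictly above the block antidiagonal. Computing the upper-left $2n\times 2n$ block in terms of $g_1,g_2$ yields $d(X)=\det(g_1)\det(g_2)$. For the unipotent pieces, one observes that $N_{2n,r}$ consists of upper-block-triangular matrices in a very restrictive shape, so $Q_r(n_1,n_2)Q_r^{-1}$ is again unipotent upper triangular (hence in $P_{4n}$ with $d=1$). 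Multiplying the two contributions gives the asserted formula.

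For part (ii), the element $x=(1,x_1)$ acts trivially on the ``first $n-r$ coordinates'' in each half. The only nontrivial blocks of $Q_r$ that do not commute with $x$ are those containing $\widetilde{I_r}$, and these lie precisely on the $\Sp_{2r}$ component where $x_1$ lives; the similitude relation for $x_1$ then causes the would-be lower-left block of $X$ to cancel. Part (iii) is the special case $r=n$, where $\widetilde{I_n}=I_n$ and $I'_0=0$, so $Q_n$ takes a simple $2\times 2$ block-matrix form. A direct computation of $Q_n(g,g)Q_n^{-1}$, expanding $g=\mat{A}{B}{C}{D}$ with multiplier $\mu$ and using $^tgJ_ng=\mu J_n$, shows the lower-left block vanishes; the two copies of $g$ contribute $\mu$ and $\mu^{-1}$ to the scalar part of the upper block, giving $d(X)=1$.

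The main obstacle is just bookkeeping: in part (i) the conjugation involves permuting four sets of $n$-by-$n$ (or $r$-by-$r$) blocks, and keeping track of which entries land where requires care. I would handle this by splitting $n$-indexed rows/columns into an $(n-r)$-part and an $r$-part, effectively working with $8\times 8$ block matrices; in this refined splitting the action of $Q_r$ is simply a permutation-plus-shear, making the calculation mechanical. Parts (ii) and (iii) are then essentially one-line verifications, with (iii) being the standard computation underlying the doubling method.
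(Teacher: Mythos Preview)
Your proposal is correct and is precisely the approach the paper takes: the paper's own proof reads, in its entirety, ``This follows by direct verification.'' Your outline of the block-matrix computation, including the $8\times8$ block splitting for part~(i) and the use of the similitude relation in parts~(ii)--(iii), is exactly the kind of verification the authors have in mind.
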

\begin{proof}
This follows by direct verification.
\end{proof}

Next we provide a set of coset representatives for $P_{4n}(F) \bs  P_{4n}(F)Q_r H_{2n,2n}(F)$.

\begin{proposition}\label{rightcosetdec}
 For each $0 \le r \le n$, we have the coset decomposition
 $$
  P_{4n}(F)Q_r H_{2n,2n}(F) = \bigsqcup_{\substack{x = (1, x_1) \in\Sp_{2n}(F)\\ x_1 \in \Sp_{2r}(F) }}\;\bigsqcup_{\substack{y \in P_{2n, r}(F) \bs G_{2n}(F) \\ z \in P_{2n, r}(F) \bs G_{2n}(F)}} P_{4n}(F) \cdot Q_r \cdot (xy, z),
 $$
 where we choose the representatives $y,z$ to be in $\Sp_{2n}(F)$.
\end{proposition}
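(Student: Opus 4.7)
The approach is to translate the problem via the natural bijection $P_{4n}(F) \bs P_{4n}(F) Q_r H_{2n,2n}(F) \cong P_r^H \bs H_{2n,2n}(F)$, where $P_r^H := Q_r^{-1} P_{4n}(F) Q_r \cap H_{2n,2n}(F)$, and to produce an explicit set of coset representatives for $P_r^H$ in $H_{2n,2n}(F)$. Since the Siegel Levi of $G_{2n}$ surjects onto the multiplier group $F^\times$, one has $G_{2n}(F) = P_{2n,r}(F) \cdot \Sp_{2n}(F)$, hence $P_{2n,r}(F) \bs G_{2n}(F) \cong (P_{2n,r}(F) \cap \Sp_{2n}(F)) \bs \Sp_{2n}(F)$; this justifies choosing $y, z \in \Sp_{2n}(F)$ as coset representatives.

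The core argument treats $0 \le r < n$. Given $(h_1, h_2) \in H_{2n,2n}(F)$, I would write $h_i = p_i w_i$ with $p_i \in P_{2n,r}(F)$ and $w_i \in \Sp_{2n}(F)$ a fixed coset representative, and then decompose $p_i = \ell_i m_i n_i$ using the Levi factorization $P_{2n,r} = \GL_{n-r} \cdot G_r \cdot N_{2n,r}$ (so $\mu(m_1) = \mu(m_2) =: v$ is forced by the matching-multiplier condition on $H_{2n,2n}$). Lemma~\ref{lemmaqrprop}(i) furnishes elements of $P_r^H$ acting by independent $\GL_{n-r}$-Levi and unipotent translations on the two factors; applying such an element on the left cancels $\ell_i$ and $n_i$ and reduces $(h_1, h_2)$ to the form $((1, m_1) w_1, (1, m_2) w_2)$, where $(1, m_i)$ denotes the $G_r$-Levi embedding of $m_i$. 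The diagonal pair $((1, m_2^{-1}), (1, m_2^{-1}))$ lies in $P_r^H$ by the (extended) Lemma~\ref{lemmaqrprop}(ii): the computation $Q_r ((1, g), (1, g)) Q_r^{-1} \in P_{4n}$ extends verbatim from $g \in \Sp_{2r}(F)$ to all $g \in G_r(F)$, since the relevant conjugation is insensitive to the multiplier. Left-multiplying yields $((1, x_1) w_1, w_2)$ with $x_1 := m_2^{-1} m_1 \in G_r(F)$ of multiplier $v^{-1} \cdot v = 1$, hence $x_1 \in \Sp_{2r}(F)$. Setting $y := w_1$, $z := w_2$, $x := (1, x_1)$ produces the required representative.

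For injectivity, suppose $(q_1, q_2)(xy, z) = (x'y', z')$ for some $(q_1, q_2) \in P_r^H$. The inclusion $P_r^H \subseteq P_{2n,r}(F)^2$, verified directly from the block form of $Q_r$ together with the Siegel-parabolic condition defining $P_{4n}$, allows me to project modulo $P_{2n,r}(F)$ on each factor; since $x, x'$ lie in the $G_r$-Levi of $P_{2n,r}$, the projections yield $y = y'$ and $z = z'$. Then $q_2 = 1$ and $q_1 = x' x^{-1} = (1, x_1' x_1^{-1})$, and the constraint $(q_1, 1) \in P_r^H$ combined with the matching-$G_r$-Levi condition forces $x_1' x_1^{-1} = 1$, so $x = x'$. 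The case $r = n$ follows from Lemma~\ref{lemmaqrprop}(iii) almost immediately: the diagonal $G_{2n}(F) \subseteq P_n^H$ reduces any $(h_1, h_2)$ to $(h_2^{-1} h_1, 1)$, and $h_2^{-1} h_1 \in \Sp_{2n}(F)$ by the multiplier condition.

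The main subtlety I anticipate is verifying the extension of Lemma~\ref{lemmaqrprop}(ii) to $g \in G_r(F)$ of arbitrary multiplier — this is needed precisely when the common multiplier $v$ is not a square in $F^\times$, so that scalar elements $(\lambda I_{2n}, \lambda I_{2n}) \in P_r^H$ alone do not suffice. While the extension should follow from a direct repetition of the proof of Lemma~\ref{lemmaqrprop}(ii), making this explicit is a necessary technical step before the reduction chain above goes through uniformly.
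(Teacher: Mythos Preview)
Your argument is essentially correct and provides a genuinely different, self-contained proof from the paper's. The paper itself does not argue directly: it simply invokes Shimura~\cite[Prop.~2.7]{shibook1} for the decomposition and then remarks that the ambiguity in the choice of $y,z$ is absorbed by Lemma~\ref{lemmaqrprop}~(i),~(ii). So where the paper outsources the structural work to Shimura's book, you rebuild it from the description of the stabilizer $P_r^H = Q_r^{-1}P_{4n}(F)Q_r \cap H_{2n,2n}(F)$ using the block calculations encoded in Lemma~\ref{lemmaqrprop}. Your route has the advantage of being internal to the paper's setup; the paper's route has the advantage of brevity and of resting on a standard reference.

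Two points in your write-up deserve to be made explicit rather than asserted. First, the extension of Lemma~\ref{lemmaqrprop}~(ii) to $g \in G_r(F)$ with nontrivial multiplier: you correctly flag this as the main subtlety, and it is indeed a direct block computation (essentially the same one that proves part~(iii) for $r=n$), but it should be written out since the paper only states the $\Sp_{2r}$ case. Second, your injectivity step relies on the assertion that $P_r^H$ is contained in the subset of $P_{2n,r}(F)^2$ where the $G_r$-Levi components agree; you invoke this as ``the matching-$G_r$-Levi condition'' without proof. This is exactly the content of Shimura's stabilizer computation, and while it does follow from inspecting $Q_r(h_1,h_2)Q_r^{-1}$ block by block, that inspection is the substance of the proposition and should not be left implicit. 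Once both computations are supplied, your argument is complete.
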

\begin{proof} This follows from Shimura~\cite[Prop.\ 2.7]{shibook1}. Note that the choice of representatives $y$ and $z$ is irrelevant by i) and ii) of Lemma \ref{lemmaqrprop}.
\end{proof}
\section{Eisenstein series and zeta integrals}
\subsection{Degenerate principal series representations}
Let $\chi$ be a character of $F^\times \backslash \A^\times$. We define a character on $P_{4n}(\A)$, also denoted by $\chi$, by $\chi(p) = \chi(d(p))$. For a complex number $s$, let
\begin{equation}\label{ind-repn}
 I(\chi,s) = {\rm Ind}^{G_{4n}(\A)}_{P_{4n}(\A)} ( \chi \delta_{P_{4n}}^s).
\end{equation}
Thus, $f(\,\cdot\,, s) \in I(\chi,s)$ is a smooth complex-valued function satisfying
\begin{equation}\label{ind-repn-fctn}
 f(pg, s)  = \chi(p) \delta_{P_{4n}}(p)^{s + \frac12} f(g,s)
\end{equation}
for all $p \in P_{4n}(\A)$ and $g \in G_{4n}(\A)$. Note that these functions are invariant under the center of $G_{4n}(\A)$. Let $I_v(\chi_v,s)$ be the analogously defined local representation. Using the notation introduced in Sect.~\ref{basicnotdefsec}, we have
\begin{equation}\label{Ichisaltnoteq}
 I_v(\chi_v,s)=\chi_v|\cdot|_v^{(2n+1)s}\,\rtimes\,\chi_v^{-n}|\cdot|_v^{-n(2n+1)s}.
\end{equation}
We have $I(\chi,s)\cong\otimes I_v(\chi_v,s)$ in a natural way. Given $f\in I(\chi,s)$, it follows from iii) of Lemma \ref{lemmaqrprop} that, for all $h\in\Sp_{2n}(\A)$,
\begin{equation}\label{Qninveq1}
 f(Q_n \cdot (gh, g), s)=f(Q_n \cdot (h, 1), s)\qquad\text{for }g\in G_{2n}(\A).
\end{equation}
We will mostly use this observation in the following form. Let $f_v\in I_v(\chi_v,s)$ and $K$ a maximal compact subgroup of $\Sp_{2n}(F_v)$. Then
\begin{equation}\label{Qninveq2}
 f_v(Q_n \cdot (k_1hk_2, 1), s)=f_v(Q_n \cdot (h, 1)(k_2,k_1^{-1}), s)\qquad\text{for }k_1,k_2\in K
\end{equation}
and all $h\in\Sp_{2n}(F_v)$.

In preparation for the next result, and for the unramified calculation in Sect.~\ref{unramifiedcalcsec}, we recall some facts concerning the unramified Hecke algebra at a non-archimedean place $v$ of $F$. We fix a uniformizer $\varpi$ of $F_v$. Let $K=\Sp_{2n}(\OF_v)$. Recall the Cartan decomposition
\begin{align}
 \label{cartaneq2c}\Sp_{2n}(F)&=\bigsqcup_{\substack{e_1,\cdots, e_n\in\Z\\0\leq e_1\leq \cdots \leq e_n}}K_{e_1,\cdots, e_n},
\end{align}
where
\begin{align}
 \label{cartaneq2d}K_{e_1,\cdots,e_n}&=K\,{\rm diag}(\varpi^{e_1}, \cdots, \varpi^{e_n},\varpi^{-e_1},\cdots,\varpi^{-e_n}) K.
\end{align}
Consider the spherical Hecke algebra $\mathcal{H}_n$ consisting of left and right $K$-invariant compactly supported functions on $\Sp_{2n}(F)$. The structure of this Hecke algebra is described by the Satake isomorphism
\begin{equation}\label{satakeisoeq}
 \mathcal{S}:\:\mathcal{H}_n\longrightarrow\C[X_1^{\pm1}, \cdots, X_n^{\pm1}]^W,
\end{equation}
where the superscript $W$ indicates polynomials that are invariant under the action of the Weyl group of $\Sp_{2n}$. Let $T_{e_1,\ldots,e_n}$ be the characteristic function of the set $K_{e_1,\ldots,e_n}$ defined in (\ref{cartaneq2d}). Then $T_{e_1,\ldots,e_n}$ is an element of the Hecke algebra $\mathcal{H}_n$. The values $\mathcal{S}(T_{e_1,\ldots,e_n})$ are encoded in the \emph{rationality theorem}
\begin{equation}\label{bocherersformulaeq}
 \sum_{\substack{e_1,\ldots,e_n\in\Z\\0\leq e_1\leq\ldots\leq e_n}}\mathcal{S}(T_{e_1,\ldots,e_n})Y^{e_1+\ldots+e_n}=\frac{1-Y}{1-q^nY}\prod_{i=1}^n\frac{1-q^{2i}Y^2}{(1-X_iq^nY)(1-X_i^{-1}q^nY)}.
\end{equation}
This identity of formal power series is the main result of \cite{boch1986}.

Let $\chi_1,\ldots,\chi_n$ be unramified characters of $F_v^\times$. Let $\pi$ be the unramified constituent of $\chi_1\times\ldots\times\chi_n\rtimes1$. The numbers $\alpha_i=\chi_i(\varpi)$, $i=1,\ldots,n$, are called the \emph{Satake parameters} of $\pi$. Let $v_0$ be a spherical vector in $\pi$. It is unique up to scalars. Hence, if we act on $v_0$ by an element $T$ of $\mathcal{H}_n$, we obtain a multiple of $v_0$. This multiple is given by evaluating $\mathcal{S}T$ at the Satake parameters, i.e.,
\begin{equation}\label{Heckesatakeactioneq}
 \int\limits_{\Sp_{2n}(F)}T(x)\pi(x)v_0\,dx=(\mathcal{S}T)(\alpha_1,\ldots,\alpha_n)v_0.
\end{equation}

Now assume that $v$ is a real place. Let $K=\Sp_{2n}(\R)\cap{\rm O}(2n)\cong U(n)$ be the standard maximal compact subgroup of $\Sp_{2n}(\R)$. Let $\mathfrak{g}$ be the Lie algebra of $\Sp_{2n}(\R)$, and let $\mathfrak{a}$ be the subalgebra consisting of diagonal matrices. Let $\Sigma$ be the set of restricted roots with respect to $\mathfrak{a}$. If $e_i$ is the linear map sending ${\rm diag}(a_1,\ldots,a_n,-a_1,\ldots,-a_n)$ to $a_i$, then $\Sigma$ consists of all $\pm(e_i-e_j)$ for $1\leq i<j\leq n$ and $\pm(e_i+e_j)$ for $1\leq i\leq j\leq n$. As a positive system we choose
\begin{equation}\label{posrestrictedrootseq}
 \Sigma^+=\{e_j-e_i:1\leq i<j\leq n\}\,\cup\,\{-e_i-e_j:1\leq i\leq j\leq n\}
\end{equation}
(what is more often called a \emph{negative} system). Then the positive Weyl chamber is
\begin{equation}\label{posweylchambereq}
 \mathfrak{a}^+=\{{\rm diag}(a_1,\ldots,a_n,-a_1,\ldots,-a_n):a_1<\ldots<a_n<0\}.
\end{equation}
By Proposition 5.28 of \cite{knapp1986}, or Theorem 5.8 in Sect.~I.5 of \cite{Helgason1984}, we have the integration formula
\begin{equation}\label{KAKintegrationeq}
 \int\limits_{\Sp_{2n}(\R)}\phi(h)\,dh=\alpha_n\int\limits_{K}\int\limits_{\mathfrak{a}^+}\int\limits_{K}\bigg(\prod_{\lambda\in\Sigma^+}\sinh(\lambda(H))\bigg)\phi(k_1\exp(H)k_2)\,dk_1\,dH\,dk_2,
\end{equation}
which we will use for continuous, non-negative functions $\phi$ on $\Sp_{2n}(\R)$. The measure $dH$ in \eqref{KAKintegrationeq} is the Lebesgue measure on $\mathfrak{a}^+\subset\R^n$. The positive constant $\alpha_n$ relates the Haar measure given by the integration on the right hand side to the Haar measure $dh$ on $\Sp_{2n}(\R)$ we fixed once and for all by \eqref{Ghaarpropeq1}. We will calculate $\alpha_n$ explicitly in Appendix \ref{KAKmeasureapp}.
\subsection{Local zeta integrals}\label{convsec}
\begin{lemma}\label{convergencelemma}
 \begin{enumerate}
  \item Let $v$ be any place of $F$ and let $f_v\in I_v(\chi_v,s)$. Then, for ${\rm Re}(s)$ large enough, the function on $\Sp_{2n}(F_v)$ defined by $h\mapsto f_v(Q_n\cdot(h,1),s)$ is in $L^1(\Sp_{2n}(F_v))$.
  \item Let $f\in I(\chi,s)$. Then, for ${\rm Re}(s)$ large enough, the function on $\Sp_{2n}(\A)$ defined by $h\mapsto f(Q_n\cdot(h,1),s)$ is in $L^1(\Sp_{2n}(\A))$.
 \end{enumerate}
\end{lemma}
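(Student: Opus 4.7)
The plan is to reduce both statements to a standard product of one-variable Tate-type integrals via the Iwasawa decomposition of the ambient group $G_{4n}(F_v)$.

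For part (i), fix a maximal compact subgroup $K_v \subset G_{4n}(F_v)$ and write $Q_n \cdot (h, 1) = p_h \cdot k_h$ with $p_h \in P_{4n}(F_v)$ and $k_h \in K_v$. The transformation law \eqref{ind-repn-fctn} then gives
$$
|f_v(Q_n \cdot (h, 1), s)| = \delta_{P_{4n}}(p_h)^{\mathrm{Re}(s) + 1/2}\,|f_v(k_h, s)|,
$$
and boundedness of the section $f_v$ on the compact set $K_v$ controls $|f_v(k_h, s)|$ uniformly in $h$. It therefore suffices to show that $\int_{\Sp_{2n}(F_v)} \delta_{P_{4n}}(p_h)^{\mathrm{Re}(s) + 1/2}\,dh$ is finite for $\mathrm{Re}(s)$ large. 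I would decompose $h = n a k'$ via the Iwasawa decomposition of $\Sp_{2n}(F_v)$ relative to its Siegel parabolic, with $a = \mathrm{diag}(a_1, \ldots, a_n, a_1^{-1}, \ldots, a_n^{-1})$; by \eqref{Qninveq2} the right $k'$-translate contributes only a bounded factor. A direct matrix computation, patterned on Lemma \ref{lemmaqrprop}, then expresses $\delta_{P_{4n}}(p_{na})$ as an explicit product $\prod_i |a_i|_v^{c_i}$, so that the resulting integral over the unipotent radical and the split torus, weighted by the appropriate modulus factor, factors as a product of Tate-type integrals that converge once $\mathrm{Re}(s)$ exceeds an explicit threshold.

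For part (ii), the local estimates from (i) handle the finitely many places at which $f_v$ is not spherical. At the remaining unramified non-archimedean places, the relevant local integral is, by the unramified computation leading to \eqref{unramcalceq}, an explicit ratio of local $L$-factors whose Euler product converges absolutely in a common right half-plane. Combining the finitely many local $L^1$ bounds with the convergence of this Euler product yields the global $L^1$ bound.

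The main technical obstacle is the explicit determination of $\delta_{P_{4n}}(p_{na})$ as a function of $(n, a)$. Rather than attempt the Iwasawa decomposition of $Q_n \cdot (h, 1)$ via a brute-force $4n \times 4n$ matrix manipulation, the cleaner route is to exploit the open double coset $P_{4n} Q_n H_{2n,2n}$ from Proposition \ref{doublecosetdec}: Lemma \ref{lemmaqrprop}(iii) pins down the $d$-function of $p_h$ along the diagonal copy of $G_{2n}$, and a supplementary computation of the left action by the Siegel Levi $\GL_n$ reduces the modulus calculation to a standard estimate on $\GL_n$.
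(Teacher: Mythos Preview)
Your reduction in part~(i)---writing $Q_n\cdot(h,1)=p_hk_h$ and bounding $|f_v(k_h,s)|$ uniformly by compactness/smoothness---is sound, and reduces the problem to showing $\int_{\Sp_{2n}(F_v)}\delta_{P_{4n}}(p_h)^{\mathrm{Re}(s)+1/2}\,dh<\infty$ for $\mathrm{Re}(s)$ large. The gap is in the next step: the assertion that $\delta_{P_{4n}}(p_{na})=\prod_i|a_i|_v^{c_i}$ is false. Lemma~\ref{lemmaqrprop} concerns conjugation $Q_r(g,g)Q_r^{-1}$ and says nothing about the Iwasawa decomposition of $Q_n\cdot(na,1)$ inside $G_{4n}$. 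In fact the $P_{4n}$-part $p_{na}$ depends nontrivially on the unipotent variable $n$; you can see this explicitly in the archimedean computation of Lemma~\ref{Blambdanlemma1}, where after extracting the obvious torus factor one is still left with $f_k$ evaluated at a matrix involving the full unipotent data, and the resulting unipotent integral $C_k(s)$ is a genuine computation, not a constant. So the integral does not ``factor as a product of Tate-type integrals'' in the way you describe, and the unipotent integration is precisely the missing work.

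The paper avoids this by using the Cartan decomposition $KAK$ of $\Sp_{2n}(F_v)$ rather than Iwasawa. After averaging $|f_v|$ over $K\times K$ via \eqref{Qninveq2}, one obtains a bi-$K$-invariant function, whose value on the diagonal torus is an explicit power of $q^{-(e_1+\cdots+e_n)}$ times something bounded by smoothness; see \eqref{convergencelemmaeq4}. What remains is the sum $\sum_{0\le e_1\le\cdots\le e_n}\mathrm{vol}(K_{e_1,\ldots,e_n})\,c(s)^{e_1+\cdots+e_n}$, and the double-coset volumes are identified as Satake transforms evaluated at the parameters of the \emph{trivial} representation of $\Sp_{2n}(F_v)$, so that the rationality theorem \eqref{bocherersformulaeq} gives convergence.

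Your part~(ii) also conflates two different integrals. The unramified computation leading to \eqref{unramcalceq} evaluates the zeta integral $\int f_v(Q_n(h,1),s)\,\pi_v(h)v_0\,dh$ in terms of $L$-factors for $\pi_v\boxtimes\chi_v$, but the $L^1$-norm $\int|f_v(Q_n(h,1),s)|\,dh$ has nothing to do with $\pi_v$. At a spherical place it is rather the zeta integral for the \emph{trivial} representation of $\Sp_{2n}(F_v)$ against the positive spherical section, and it is this Euler product---obtained by substituting the Satake parameters $\alpha_i=q^i$ of the trivial representation into \eqref{bocherersformulaeq}, as in \eqref{convergencelemmaeq7}---whose convergence one must control.
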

\begin{proof}
Since ii) follows from i) by definition of the adelic measure, we only have to prove the local statement. To ease notation, we will omit all subindices $v$. Define a function $f'(g,s)$ by
\begin{equation}\label{convergencelemmaeq1}
 f'(g,s)=\int\limits_{K}\int\limits_{K}|f(g(k_2,k_1^{-1}), s)|\,dk_1\,dk_2,\qquad g\in G_{4n}(F).
\end{equation}
From \eqref{ind-repn-fctn}, we see that
\begin{equation}\label{ind-repn-fctn2}
 f'(pg, s)  = |\chi(p) \delta_{P_{4n}}(p)^{s + \frac12}|\,f'(g,s)
\end{equation}
for all $p \in P_{4n}(F)$ and $g \in G_{4n}(F)$. Equation \eqref{Qninveq2} implies that
\begin{equation}\label{convergencelemmaeq2}
 \int\limits_{\Sp_{2n}(F)}|f(Q_n \cdot (h, 1), s)|\,dh=\int\limits_{\Sp_{2n}(F)}|f'(Q_n \cdot (h, 1), s)|\,dh.
\end{equation}
Now assume that $v$ is a non-archimedean place. It follows from \eqref{cartaneq2c} that
\begin{align}\label{convergencelemmaeq3}
 &\int\limits_{\Sp_{2n}(F)}| f'(Q_n \cdot (h, 1), s)|\,dh=\sum_{\substack{e_1,\cdots, e_n\in\Z\\0\leq e_1\leq \cdots \leq e_n}}\;\:\int\limits_{K_{e_1,\cdots, e_n}}|f'(Q_n \cdot (h, 1), s)|\,dh\nonumber\\
 &\hspace{5ex}=\sum_{\substack{e_1,\cdots, e_n\in\Z\\0\leq e_1\leq \cdots \leq e_n}}{\rm vol}(K_{e_1,\cdots, e_n})\:\big|f'(Q_n \cdot ({\rm diag}(\varpi^{e_1}, \cdots, \varpi^{e_n},\varpi^{-e_1},\cdots,\varpi^{-e_n}), 1), s)\big|.
\end{align}
From \eqref{ind-repn-fctn2}, we find
\begin{align}\label{convergencelemmaeq4}
 &f'(Q_n \cdot({\rm diag}(\varpi^{e_1}, \cdots, \varpi^{e_n},\varpi^{-e_1}, \cdots,\varpi^{-e_n}), 1), s)\nonumber\\
 &\hspace{8ex}=\Big(|\chi(\varpi)|q^{-(2n+1)({\rm Re}(s)+1/2)}\Big)^{e_1+\cdots+e_n}f'(\left[\begin{smallmatrix}I_n\\&I_n\\&A&I_n\\A&&&I_n\end{smallmatrix}\right]\left[\begin{smallmatrix}I_n\\&&&I_n\\&&I_n\\&-I_n\end{smallmatrix}\right],s)
\end{align}
with $A={\rm diag}(\varpi^{e_1},\ldots,\varpi^{e_n})$. By smoothness, the term $f'(\ldots)$ in the second line of \eqref{convergencelemmaeq4} takes only finitely many values, and can therefore be estimated by a constant $C$ independent of $e_1,\ldots,e_n$. Thus
\begin{equation}\label{convergencelemmaeq5}
 \int\limits_{\Sp_{2n}(F)}| f'(Q_n \cdot (h, 1), s)|\,dh\leq C\sum_{\substack{e_1,\cdots, e_n\in\Z\\0\leq e_1\leq \cdots \leq e_n}}{\rm vol}(K_{e_1,\cdots, e_n})\:c(s)^{e_1+\cdots+e_n},
\end{equation}
where $c(s)=|\chi(\varpi)|\,q^{-(2n+1)({\rm Re}(s)+1/2)}$. Consider the trivial representation $1_{\Sp_{2n}(F)}$ of $\Sp_{2n}(F)$. Since it is a subrepresentation of $|\cdot|^{-n}\times\ldots\times|\cdot|^{-1}$, its Satake parameters are $\alpha_i=q^i$ for $i=1,\ldots,n$. Let $v_0$ be a spanning vector of $1_{\Sp_{2n}(F)}$. Then
$T_{e_1,\ldots,e_n}v_0={\rm vol}(K_{e_1,\cdots, e_n})v_0.$ By \eqref{Heckesatakeactioneq} it follows that ${\rm vol}(K_{e_1,\cdots, e_n})=\mathcal{S}(T_{e_1,\ldots,e_n})(\alpha_1,\ldots,\alpha_n)$, where $\alpha_i=q^i$. Substituting $\alpha_i=q^i$ into \eqref{bocherersformulaeq}, we get
\begin{equation}\label{convergencelemmaeq7}
 \sum_{\substack{0\leq e_1\leq\ldots\leq e_n}}{\rm vol}(K_{e_1,\cdots, e_n})Y^{e_1+\ldots+e_n}= \frac{1-Y}{1-q^nY} \prod_{i=1}^n\frac{1+q^iY}{1-q^{n+i}Y},
\end{equation}
an identity of formal power series in $Y$. We see that \eqref{convergencelemmaeq5} is convergent if $c(s)$ is sufficiently small, i.e., if ${\rm Re}(s)$ is sufficiently large.

Next assume that $v$ is a real place. By \eqref{KAKintegrationeq}, \eqref{Qninveq2} and \eqref{convergencelemmaeq2},
\begin{equation}\label{convergencelemmaeq8}
 \int\limits_{\Sp_{2n}(\R)}|f(Q_n \cdot (h, 1), s)|\,dh=\alpha_n\int\limits_{\mathfrak{a}^+}\bigg(\prod_{\lambda\in\Sigma^+}\sinh(\lambda(H))\bigg)|f'(Q_n \cdot (\exp(H), 1), s)|\,dH.
\end{equation}
It follows from \eqref{ind-repn-fctn2} that, with $|\chi|=|\cdot|^d$ and $H={\rm diag}(a_1,\ldots,a_n,-a_1,\ldots,-a_n)$,
\begin{equation}\label{convergencelemmaeq9}
 f'(Q_n \cdot (\exp(H), 1), s)=\Big(e^{d+(2n+1)({\rm Re}(s)+1/2)}\Big)^{a_1+\cdots+a_n}f'(\left[\begin{smallmatrix}I_n\\&I_n\\&A&I_n\\A&&&I_n\end{smallmatrix}\right]\left[\begin{smallmatrix}I_n\\&&&I_n\\&&I_n\\&-I_n\end{smallmatrix}\right],s)
\end{equation}
with $A={\rm diag}(e^{a_1},\ldots,e^{a_n})$. Since the $a_i$'s are negative, the term $f'(\ldots)$ on the right hand side can be estimated by a constant $C$. Hence
\begin{equation}\label{convergencelemmaeq10}
 \int\limits_{\Sp_{2n}(\R)}|f(Q_n \cdot (h, 1), s)|\,dh\leq \alpha_nC\int\limits_{\mathfrak{a}^+}\bigg(\prod_{\lambda\in\Sigma^+}\sinh(\lambda(H))\bigg)\:c(s)^{a_1+\cdots+a_n}\,dH,
\end{equation}
where $c(s)=e^{d+(2n+1)({\rm Re}(s)+1/2)}$. Writing out the expressions for $\sinh(\lambda(H))$, it is easy to see that the integral on the right converges for real part of $s$ large enough.
\end{proof}

\begin{proposition}\label{interopprop}
 Let $\chi$ be a character of $F^\times\backslash\A^\times$.
 \begin{enumerate}
  \item For a place $v$ of $F$ let $f_v\in I_v(\chi_v,s)$. Let $(\pi_v,V_v)$ be an admissible, unitary representation of $\Sp_{2n}(F_v)$. If ${\rm Re}(s)$ is sufficiently large, then the integral
   \begin{equation}\label{Rsflocaleq}
    Z_v(s;f_v, w_v):=\int\limits_{\Sp_{2n}(F_v)} f_v(Q_n \cdot (h, 1), s) \pi_v(h)w_v\,dh
   \end{equation}
   converges absolutely to an element of $\bar V_v$, for any $w_v$ in the Hilbert space completion $\bar V_v$. If $w_v$ is in $V_v$, then so is $Z_v(s;f_v, w_v)$.
  \item Let $\pi\cong\otimes\pi_v$ be a cuspidal, automorphic representation of $G_{2n}(\A)$. Let $V$ be the space of automorphic forms on which $\pi$ acts. If ${\rm Re}(s)$ is sufficiently large, then the function
   \begin{equation}\label{interoppropeq2}
    g\longmapsto\int\limits_{\Sp_{2n}(\A)} f(Q_n \cdot (h, 1), s)\phi(gh)\,dh
   \end{equation}
   is an element of $V$, for every $\phi\in V$. If $f=\otimes f_v$ with $f_v\in I_v(\chi_v,s)$, and if $\phi$ corresponds to the pure tensor $\otimes w_v$, then the function \eqref{interoppropeq2} corresponds to the pure tensor $\otimes Z_v(s;f_v, w_v)$.
 \end{enumerate}
\end{proposition}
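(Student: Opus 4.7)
The plan has three parts: local convergence and regularity for (i), global convergence and automorphy for (ii), and the factorization identity.

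For (i), I first bound the integrand using unitarity of $\pi_v$: for $w_v\in\bar V_v$,
$$\|f_v(Q_n\cdot(h,1),s)\pi_v(h)w_v\|_{\bar V_v}=|f_v(Q_n\cdot(h,1),s)|\,\|w_v\|_{\bar V_v}.$$
Absolute convergence of the Bochner integral defining $Z_v(s;f_v,w_v)$ in $\bar V_v$ follows immediately from Lemma \ref{convergencelemma}(i), for $\mathrm{Re}(s)$ large enough. To promote $Z_v(s;f_v,w_v)$ from an element of $\bar V_v$ to an element of $V_v$ when $w_v\in V_v$, I exploit the identity \eqref{Qninveq2}: after the change of variables $h\mapsto k^{-1}h$ with $k$ in a maximal compact subgroup $K\subset\Sp_{2n}(F_v)$,
$$\pi_v(k)Z_v(s;f_v,w_v)=\int_{\Sp_{2n}(F_v)}f_v(Q_n\cdot(h,1)(1,k),s)\,\pi_v(h)w_v\,dh.$$
In the non-archimedean case, smoothness of $f_v$ and $w_v$ exhibits an open-compact subgroup fixing $Z_v(s;f_v,w_v)$. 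In the archimedean case, the right $(1,K)$-translates of $f_v$ span a finite-dimensional space, giving $K$-finiteness of $Z_v(s;f_v,w_v)$; smoothness under $\g$ then follows by differentiation under the integral sign, justified by estimates analogous to those in Lemma \ref{convergencelemma}(i) applied to derivatives of $f_v$.

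For (ii), absolute convergence of the integral in \eqref{interoppropeq2} follows from Lemma \ref{convergencelemma}(ii) combined with the boundedness of cusp forms. Writing $T\phi$ for the resulting function, left $G_{2n}(F)$-invariance of $T\phi$ is immediate from the automorphy of $\phi$: for $\gamma\in G_{2n}(F)$ one has $\phi(\gamma gh)=\phi(gh)$, whence $(T\phi)(\gamma g)=(T\phi)(g)$. To place $T\phi$ inside $V$ (not merely in the ambient automorphic spectrum), I view the integrand as a Bochner-integrable function $h\mapsto f(Q_n\cdot(h,1),s)\,\pi(h)\phi$ taking values in the closed $G_{2n}(\A)$-invariant subspace $\bar V$, where $(\pi(h)\phi)(g)=\phi(gh)$; the integral then a priori lies in $\bar V$, and to refine it into $V$ one applies the smoothness and $K$-finiteness arguments from part (i) at every place.

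For the factorization statement, when $f=\otimes f_v$ and $\phi$ corresponds to $\otimes w_v$ under the isomorphism $V\cong\otimes\pi_v$, the product structure of the adelic Haar measure on $\Sp_{2n}(\A)$ together with Fubini (justified by the absolute convergence just established) factors the global integral into the restricted tensor product of the local integrals $Z_v(s;f_v,w_v)$. The main technical obstacle throughout is showing that the Bochner integral lies in the smooth/$K$-finite space $V_v$ (resp.\ $V$) rather than only in the Hilbert completion; the right $K$-finiteness of the section $f_v$ in $I_v(\chi_v,s)$ together with \eqref{Qninveq2} is what makes this refinement possible.
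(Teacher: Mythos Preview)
Your proposal is correct and follows essentially the same approach as the paper: absolute convergence via Lemma~\ref{convergencelemma}, refinement from $\bar V_v$ to $V_v$ (resp.\ $\bar V$ to $V$) using the $K$-finiteness supplied by \eqref{Qninveq2}, and factorization via the product measure/unitary isomorphism. The paper's proof is terser but identical in substance; your write-up simply unpacks the ``straightforward way using \eqref{Qninveq2}'' that the paper leaves to the reader.
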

\begin{proof}
i) The absolute convergence follows from Lemma \ref{convergencelemma} i). The second assertion can be verified in a straightforward way using \eqref{Qninveq2}.

ii) Lemma \ref{convergencelemma} ii) implies that the integral
\begin{equation}\label{interoppropeq3}
 \int\limits_{\Sp_{2n}(\A)}  f(Q_n \cdot(h  ,1), s) (R(h)\phi)\,dh,
\end{equation}
where $R$ denotes right translation, converges absolutely to an element in the Hilbert space completion $\bar V$ of $V$. With the same argument as in the local case we see that this element has the required smoothness properties that make it an automorphic form, thus putting it into $V$. Evaluating at $g$, we obtain the first assertion. The second assertion follows by applying a unitary isomorphism $\pi\cong\otimes\pi_v$ to \eqref{interoppropeq3}.
\end{proof}
\subsection{The basic identity}
Let $I(\chi,s)$ be as in \eqref{ind-repn}, and let $f(\cdot,s)$ be a section whose restriction to the standard maximal compact subgroup of $G_{4n}(\A)$ is independent of $s$. Consider the Eisenstein series on $G_{4n}(\A)$ which, for $\Re(s)>\frac12$, is given by the absolutely convergent series
\begin{equation}\label{Eis-ser-defn}
 E(\mathbf{g},s,f) = \sum\limits_{\gamma \in P_{4n}(F) \backslash G_{4n}(F)} f(\gamma \mathbf{g}, s),
\end{equation}
and defined by analytic continuation outside this region. Let $\pi$ be a cuspidal automorphic representation of $G_{2n}(\A)$. Let $V_\pi$ be the space of cuspidal automorphic forms realizing $\pi$. For any automorphic form $\phi $ in $V_\pi$ and any $s\in \C$ define a function $Z(s;f, \phi)$ on $G_{2n}(F) \bs G_{2n}(\A)$ by
\begin{equation}\label{maindefinition}
 Z(s; f, \phi) (g) = \int\limits_{\Sp_{2n}(F) \bs\,g\cdot\Sp_{2n}(\A)} E((
 h  ,g), s, f) \phi(h)\,dh.
\end{equation}

\begin{remark}Note that  $g\cdot\Sp_{2n}(\A) = \{h \in G_{2n}(\A):\mu_n(h) = \mu_n(g)\}.$ \end{remark}

 \begin{remark} $E(g,s,f)$ is slowly increasing away from its poles and $\phi$ is rapidly decreasing. So   $Z(s; f, \phi)(g)$ converges  absolutely for $s \in \C$ away from the poles of the Eisenstein series and defines an automorphic form on $G_{2n}$. We will see soon that $Z(s;f, \phi)$ in fact belongs to $V_\pi$.
  \end{remark}Using Propositions~\ref{doublecosetdec} and~\ref{rightcosetdec}, we conclude that
$
 Z(s; f, \phi)(g) = \sum_{r=0}^n Z_r(s;f, \phi)(g),
$
where
$$
 Z_r(s;f, \phi)(g) = \int\limits_{\Sp_{2n}(F) \bs \, g\cdot\Sp_{2n}(\A)}\,\sum_{\substack{x = (1, x_1) \in \Sp_{2n}(F)\\ x_1 \in \Sp_{2r}(F) }}\;\sum_{\substack{y \in P_{2n, r}(F) \bs G_{2n}(F) \\ z \in P_{2n, r}(F) \bs G_{2n}(F)}}f(Q_r \cdot(xyh,zg), s) \phi(h)\,dh.
$$

\begin{proposition}\label{vanishingprop}Let $0 \le r <n$. Then $Z_r(s;f, \phi)(g) = 0.$
\end{proposition}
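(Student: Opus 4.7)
The idea is the standard unfolding argument for the doubling method, exploiting the cuspidality of $\phi$: I would manipulate $Z_r$ so that an inner integral becomes the constant term of $\phi$ along a proper parabolic of $\Sp_{2n}$, which then vanishes.

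Write $P' \subset P_{2n,r}$ for the subgroup $\GL_{n-r}\ltimes N_{2n,r}$, i.e.\ $P_{2n,r}$ with its $\Sp_{2r}$-Levi factor trivialized. The key structural fact is the product decomposition $(P_{2n,r}\cap\Sp_{2n})(F) = \Sp_{2r}(F)\cdot P'(F)$ with trivial intersection. This lets me combine the double sum $\sum_{x_1\in\Sp_{2r}(F)}\sum_{y\in(P_{2n,r}\cap\Sp_{2n})(F)\bs\Sp_{2n}(F)}$ into a single sum $\sum_{y'\in P'(F)\bs\Sp_{2n}(F)}$ via $y' = xy$. Using the left $\Sp_{2n}(F)$-invariance of $\phi$, the standard unfolding trick then absorbs the $y'$-sum into the outer integration, giving
$$Z_r(s;f,\phi)(g) = \int\limits_{P'(F)\bs g\Sp_{2n}(\A)}\;\sum_{z\in(P_{2n,r}\cap\Sp_{2n})(F)\bs\Sp_{2n}(F)}f(Q_r\cdot(h,zg),s)\,\phi(h)\,dh.$$

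Next, I would split the integration according to the Levi-unipotent decomposition $P' = \GL_{n-r}\ltimes N_{2n,r}$, peeling off an inner integral over $N_{2n,r}(F)\bs N_{2n,r}(\A)$. Lemma \ref{lemmaqrprop}(i), applied with $p_1\in N_{2n,r}(\A)$ (so $g_1 = I$) and $p_2 = 1$, yields $Q_r\cdot(p_1,1)\cdot Q_r^{-1}\in P_{4n}(\A)$ with $d$-value equal to $1$, so that $f(Q_r\cdot(nh,zg),s) = f(Q_r\cdot(h,zg),s)$ for every $n\in N_{2n,r}(\A)$. Pulling $f$ out of the $N_{2n,r}$-integration leaves $\int_{N_{2n,r}(F)\bs N_{2n,r}(\A)}\phi(nh)\,dn$, the constant term of $\phi$ along the parabolic $P_{2n,r}\cap\Sp_{2n}$. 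Since $r<n$ makes this a proper parabolic of $\Sp_{2n}$, cuspidality of $\phi$ kills the inner integral, hence $Z_r=0$.

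The only potential obstacle is bookkeeping: carefully verifying the recombination of sums (which rests on the decomposition $P_{2n,r}\cap\Sp_{2n} = \Sp_{2r}\cdot P'$) and the measure-theoretic factorization of the adelic quotient $P'(F)\bs g\Sp_{2n}(\A)$ into an outer quotient times $N_{2n,r}(F)\bs N_{2n,r}(\A)$. No input deeper than Lemma \ref{lemmaqrprop} and the cuspidality of $\phi$ enters the argument.
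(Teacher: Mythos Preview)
Your argument is correct and follows essentially the same route as the paper's proof: unfold the $y$-sum into the outer integral, peel off the unipotent integral over $N_{2n,r}(F)\bs N_{2n,r}(\A)$, use Lemma~\ref{lemmaqrprop}(i) to pull $f$ out, and invoke cuspidality. The only organizational difference is that you first absorb the $\Sp_{2r}$-sum over $x$ into the $y$-sum via the decomposition $(P_{2n,r}\cap\Sp_{2n})(F)=\Sp_{2r}(F)\cdot P'(F)$, whereas the paper keeps the $x$-sum separate, commutes $x$ past $n$ using that $x$ normalizes $N_{2n,r}$, and then applies Lemma~\ref{lemmaqrprop}(i); both devices accomplish the same thing and neither is materially simpler.
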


\begin{proof} Let $N_{2n,r}$ be the unipotent radical of the maximal parabolic subgroup $P_{2n, r}$. Let $P'_{2n, r}=P_{2n, r}\cap\Sp_{2n}$. Then
\begin{align*}
 Z_r(s;f, \phi)(g) &= \int\limits_{\Sp_{2n}(F) \bs \, g\cdot\Sp_{2n}(\A)}\;\sum_{\substack{x = (1, x_1) \in \Sp_{2n}(F)\\ x_1 \in \Sp_{2r}(F) }}\;\sum_{\substack{y \in P'_{2n, r}(F) \bs \Sp_{2n}(F) \\ z \in P_{2n, r}(F) \bs G_{2n}(F)}}f(Q_r \cdot(xy h  ,zg), s) \phi(h)\,dh\\
 &= \int\limits_{P'_{2n, r}(F) \bs \, g\cdot\Sp_{2n}(\A)}\;\sum_{\substack{x = (1, x_1) \in \Sp_{2n}(F)\\ x_1 \in \Sp_{2r}(F) }}\;\sum_{z \in P_{2n, r}(F) \bs G_{2n}(F)}f(Q_r \cdot(xh  ,zg), s) \phi(h)\,dh\\
 &= \int\limits_{N_{2n,r}(\A)P'_{2n, r}(F) \bs \,g\cdot\Sp_{2n}(\A)}\;\int\limits_{N_{2n,r}(F) \bs N_{2n,r}(\A)}  \sum_{x,z}f(Q_r \cdot(xnh  ,zg), s) \phi(nh)\,dn\,dh.
\end{align*}
Since the element $x$ normalizes $N_{2n,r}(\A)$ and $N_{2n,r}(F)$, we can commute $x$ and $n$. Then $n$ can be omitted by i) of Lemma \ref{lemmaqrprop}. Hence
$$
 Z_r(s;f, \phi)(g)= \int\limits_{N_{2n,r}(\A)P'_{2n, r}(F) \bs \,g\cdot\Sp_{2n}(\A)} \;\int\limits_{N_{2n,r}(F) \bs N_{2n,r}(\A)}\bigg(\sum_{x,z}f(Q_r \cdot(xh  ,zg), s)\bigg)\phi(nh)\,dn\,dh,
$$
and the cuspidality of $\phi$ implies that this is zero.
\end{proof}

For the following theorem, which is the main result of this section, recall the local integrals defined in \eqref{Rsflocaleq}.

\begin{theorem}[Basic identity]\label{basicidentity}
 Let $\phi \in V_\pi$ be a cusp form which corresponds to a pure tensor $\otimes_v \phi_v$ via the isomorphism $\pi\cong\otimes\pi_v$. Assume that $f\in I(\chi,s)$ factors as $\otimes f_v$ with $f_v\in I(\chi_v,s)$. Let the function $Z(s;f, \phi)$ on $G_{2n}(F)\bs G_{2n}(\A)$ be defined as in~\eqref{maindefinition}. Then $Z(s;f, \phi)$ also belongs to $V_\pi$ and corresponds to the pure tensor $\otimes_v Z_v(s;f_v, \phi_v)$.
\end{theorem}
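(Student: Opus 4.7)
The plan is to reduce $Z(s;f,\phi)(g)$ to the form of the integral in \eqref{interoppropeq2} of Proposition~\ref{interopprop}, and then to invoke that proposition. The skeleton is already in place: from the double-coset decomposition of Proposition~\ref{doublecosetdec} (combined with Proposition~\ref{rightcosetdec}) we have $Z(s;f,\phi) = \sum_{r=0}^{n} Z_r(s;f,\phi)$, and Proposition~\ref{vanishingprop} kills every term with $r<n$ by cuspidality. So my first step is simply to record $Z(s;f,\phi) = Z_n(s;f,\phi)$.

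Second, I would unwrap the $r=n$ term. Since $P_{2n,n} = G_{2n}$, the coset space $P_{2n,n}(F)\bs G_{2n}(F)$ is a singleton, so the $y$- and $z$-sums contribute only the identity representative. The constraint $x=(1,x_1)$ with $x_1\in\Sp_{2n}(F)$ now allows $x$ to range over all of $\Sp_{2n}(F)$. Using that $\phi$ is $G_{2n}(F)$-invariant I rewrite $\phi(h)=\phi(xh)$ inside the sum, and the standard unfolding of a sum over $\Sp_{2n}(F)$ against an integral over $\Sp_{2n}(F)\bs\,g\cdot\Sp_{2n}(\A)$ yields
$$
Z_n(s;f,\phi)(g) = \int_{g\cdot\Sp_{2n}(\A)} f(Q_n\cdot(h,g),s)\,\phi(h)\,dh.
$$

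Third, I would substitute $h\mapsto gh$ with $h\in\Sp_{2n}(\A)$ and apply the invariance \eqref{Qninveq1} to strip the $g$ from the first slot of $Q_n$, producing
$$
Z_n(s;f,\phi)(g) = \int_{\Sp_{2n}(\A)} f(Q_n\cdot(h,1),s)\,\phi(gh)\,dh,
$$
which is exactly the expression analyzed in Proposition~\ref{interopprop}(ii). That proposition then delivers both conclusions simultaneously: the output lies in $V_\pi$, and under the isomorphism $\pi\cong\otimes\pi_v$ with $\phi\leftrightarrow\otimes\phi_v$ it corresponds to $\bigotimes_v Z_v(s;f_v,\phi_v)$. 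The identification is initially valid for $\Re(s)$ large enough to secure absolute convergence of all integrals; general $s$ follows by the meromorphic continuation of the Eisenstein series on the left and of each local zeta integral on the right.

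The main point to watch is the Fubini step that licenses the unfolding: one must interchange the sum over $x\in\Sp_{2n}(F)$ with the fundamental-domain integral. This is legitimized by Lemma~\ref{convergencelemma}(ii) together with the rapid decay of the cusp form $\phi$, which together guarantee absolute convergence in the half-plane where the integral representation is set up. Apart from that convergence checkpoint the argument is essentially formal bookkeeping, assembling the pieces already put in place by Propositions~\ref{doublecosetdec},~\ref{rightcosetdec},~\ref{vanishingprop}, and~\ref{interopprop}.
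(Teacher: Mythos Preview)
Your proof is correct and follows the paper's argument essentially line for line: reduce to $Z_n$ via Proposition~\ref{vanishingprop}, unfold the $\Sp_{2n}(F)$-sum, apply \eqref{Qninveq1} to shift $g$, and invoke Proposition~\ref{interopprop}(ii). Your aside about meromorphic continuation to general $s$ is a slight over-reach here (the paper defers that to Theorem~\ref{global-thm}, and continuation of the local integrals is not yet established), but the core argument matches exactly.
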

\begin{proof}
By Proposition~\ref{vanishingprop}, we have that $Z(s;f, \phi)(g) = Z_n(s;f, \phi)(g).$  But
\begin{align*}
 Z_n(s;f, \phi)(g) &= \int\limits_{\Sp_{2n}(F) \bs \, g\cdot\Sp_{2n}(\A)}  \sum_{x  \in \Sp_{2n}(F) } f(Q_n \cdot(x h  ,g), s) \phi(h)\,dh \\
 &= \int\limits_{ g\cdot\Sp_{2n}(\A)}  f(Q_n \cdot(
 h  ,g), s) \phi(h)\,dh \\
 &= \int\limits_{\Sp_{2n}(\A)}  f(Q_n \cdot(h  ,1), s) \phi(gh)\,dh,
\end{align*}
where the last step follows from \eqref{Qninveq1}. The theorem now follows from Proposition \ref{interopprop}.
\end{proof}

Our goal will be to choose, at all places, the vectors $\phi_v$ and the sections $f_v$ in such a way that $Z_v(s,f_v,\phi_v)=c_v(s)\phi_v$ for an explicitly computable function $c_v(s)$.
\section{The local integral at finite places}\label{unramifiedcalcsec}
In this section we define suitable local sections and calculate the local integrals \eqref{Rsflocaleq} for all finite places $v$. We will drop the subscript $v$ throughout. Hence, let $F$ be a non-archimedean local field of characteristic zero. Let $\OF$ be its ring of integers, $\varpi$ a uniformizer, and $\p=\varpi\OF$ the maximal ideal.

\subsection{Unramified places}
We begin with the unramified case.
Let $\chi$ be an unramified character of $F^\times$, and let $\pi$ be a spherical representation of $G_{2n}(F)$. Let $f\in I(\chi,s)$ be the normalized unramified vector, i.e., $f:\:G_{4n}\rightarrow\C$ is given by
\begin{equation}\label{sphericalsectioneq}
 f(\mat{A}{*}{}{u\,^t\!A^{-1}}k)=\chi(u^{-n}\det(A))\big|u^{-n}\det(A)\big|^{(2n+1)(s+1/2)}
\end{equation}
for $A\in\GL_{2n}(F)$, $u\in F^\times$ and $k\in G_{4n}(\OF)$. Let $v_0$ be a spherical vector in $\pi$. We wish to calculate the local integral
\begin{equation}\label{Rsflocaleq2}
 Z(s;f,v_0)=\int\limits_{\Sp_{2n}(F)} f(Q_n \cdot (h, 1), s) \pi(h)v_0 \, dh.
\end{equation}

Let $\sigma,\chi_1,\cdots, \chi_n$ be unramified characters of $F^\times$ such that $\pi$ is the unique spherical constituent of $\chi_1\times \cdots \times \chi_n\rtimes\sigma$. Let $\alpha_i=\chi_i(\varpi)$, $i=1,\ldots,n$, and $\alpha_0=\sigma(\varpi)$ be the Satake parameters of $\pi$. The dual group of $\GSp_{2n} \times \GL_1$ is $\GSpin_{2n+1}(\C) \times \GL_1(\C)$. There is a natural map $\varrho_{2n+1}: \GSpin_{2n+1}(\C) \rightarrow \SO_{2n+1}(\C)\rightarrow \GL_{2n+1}(\C)$. Consequently we get a tensor product map from $\GSpin_{2n+1}(\C) \times \GL_1(\C)$ into $\GL_{2n+1}(\C)$ which we denote also by $\varrho_{2n+1}$. The $L$-function $L(s,\pi \boxtimes \chi, \varrho_{2n+1})$ is then given as follows,
\begin{equation}\label{Lspichidefeq}
 L(s,\pi \boxtimes \chi, \varrho_{2n+1})=\frac1{1-\chi(\varpi)q^{-s}}
 \prod_{i=1}^n\frac1{(1-\chi(\varpi)\alpha_iq^{-s})
 (1-\chi(\varpi)\alpha_i^{-1}q^{-s})}.
\end{equation}
 We also define $L(s, \chi)=(1-\chi(\varpi)q^{-s})^{-1}$, as usual.
\begin{proposition}\label{unramifiedcalculationprop}
 Using the above notations and hypotheses, the local integral \eqref{Rsflocaleq2} is given by
 $$
  Z(s;f,v_0)=\frac{L((2n+1)s+1/2,\pi \boxtimes \chi, \varrho_{2n+1})}{L((2n+1)(s+1/2), \chi)\prod\limits_{i=1}^nL((2n+1)(2s+1)-2i, \chi^2)}\,v_0
 $$
 for real part of $s$ large enough.
\end{proposition}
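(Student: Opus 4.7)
The proof is the standard unramified computation underlying the doubling method. First I would observe that the scalar-valued factor $h \mapsto f(Q_n \cdot (h, 1), s)$ is bi-$K$-invariant on $\Sp_{2n}(F)$, where $K = \Sp_{2n}(\OF)$: since $(k,1)$ and $(1,k)$ both lie in $G_{4n}(\OF)$ for $k \in K$, right-$G_{4n}(\OF)$-invariance of $f$ together with \eqref{Qninveq2} gives both left- and right-$K$-invariance of the integrand. Since $v_0$ is spherical, the Cartan decomposition \eqref{cartaneq2c} therefore lets me rewrite
\begin{equation*}
 Z(s;f,v_0) = \sum_{0\leq e_1\leq\ldots\leq e_n} f(Q_n \cdot (a_{\underline e}, 1), s) \int\limits_{K_{e_1,\ldots,e_n}} \pi(h) v_0\,dh,
\end{equation*}
where $a_{\underline e} = {\rm diag}(\varpi^{e_1},\ldots,\varpi^{e_n},\varpi^{-e_1},\ldots,\varpi^{-e_n})$. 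By \eqref{Heckesatakeactioneq}, the inner integral equals $\mathcal{S}(T_{e_1,\ldots,e_n})(\alpha_1,\ldots,\alpha_n)\,v_0$.

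The heart of the matter is the claim $f(Q_n \cdot (a_{\underline e},1), s) = C^{e_1+\ldots+e_n}$ with $C = \chi(\varpi)\,q^{-(2n+1)(s+1/2)}$. To prove it I would use the factorization $Q_n = \alpha_n \cdot (I_{2n}, J_n)$ stated just before \eqref{Qrdefeq}, noting that the image of $(I_{2n}, J_n)$ in $G_{4n}$ is a signed permutation with entries in $\{-1,0,1\}$ and hence lies in $G_{4n}(\OF)$. Since $(I_{2n}, J_n)(a_{\underline e},1) = (a_{\underline e}, I_{2n})(I_{2n}, J_n)$ inside $H_{2n,2n}$, right-$G_{4n}(\OF)$-invariance of $f$ reduces the claim to evaluating $f(\alpha_n \cdot (a_{\underline e}, I_{2n}), s)$. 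For this I would exhibit an explicit Iwasawa decomposition: right-multiplying the argument by a lower unipotent element of $\Sp_{4n}(\OF)$ whose symmetric lower $2n \times 2n$ block is $\mat{0}{A}{A}{0}$ with $A = {\rm diag}(\varpi^{e_i})$ produces the block-diagonal element of $P_{4n}$ with Levi piece ${\rm diag}(A, I_n, A^{-1}, I_n)$ and multiplier $v=1$; reading off $d(p) = \det A = \varpi^{e_1+\ldots+e_n}$ and applying \eqref{ind-repn-fctn} then yields the claim.

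Combining the two reductions, I have
\begin{equation*}
 Z(s;f,v_0) = \bigg[\sum_{0\leq e_1\leq\ldots\leq e_n} \mathcal{S}(T_{e_1,\ldots,e_n})(\alpha_1,\ldots,\alpha_n)\,Y^{e_1+\ldots+e_n}\bigg]_{Y=C} v_0,
\end{equation*}
absolutely convergent for ${\rm Re}(s)$ large by Lemma \ref{convergencelemma}. I would then apply the rationality formula \eqref{bocherersformulaeq} with $X_i = \alpha_i$ and $Y=C$. Using the substitutions $q^nY = \chi(\varpi)q^{-(2n+1)s-1/2}$ and $q^{2i}Y^2 = \chi(\varpi)^2 q^{2i-(2n+1)(2s+1)}$, each linear factor in the resulting closed form is recognized as the inverse of a local $L$-factor via \eqref{Lspichidefeq}: the denominator $(1-q^nY)\prod_i (1-\alpha_i q^n Y)(1-\alpha_i^{-1} q^n Y)$ is precisely $L((2n+1)s+1/2,\pi\boxtimes\chi,\varrho_{2n+1})^{-1}$, and the numerator $(1-Y)\prod_i(1-q^{2i}Y^2)$ is $L((2n+1)(s+1/2),\chi)^{-1}\prod_i L((2n+1)(2s+1)-2i,\chi^2)^{-1}$, yielding the claimed ratio.

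The only genuine obstacle is the explicit Iwasawa step in the second paragraph: one must identify the correct symmetric lower-unipotent $k \in \Sp_{4n}(\OF)$, with the symmetry being forced by the symplectic condition on $k$. Once $k$ is chosen, what remains is a direct block matrix computation and the routine bookkeeping matching the formal Satake sum to the product of local $L$-factors.
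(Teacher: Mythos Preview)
Your proof is correct and follows essentially the same route as the paper's. The paper also reduces to a sum over the Cartan decomposition, evaluates $f$ on the diagonal representatives by citing the identity already established in \eqref{convergencelemmaeq4} (which is exactly the Iwasawa factorization you spell out, with the lower unipotent having symmetric block $\mat{}{A}{A}{}$ and the right factor $(I_{2n},J_n)\in G_{4n}(\OF)$), and then applies \eqref{Heckesatakeactioneq} together with B\"ocherer's rationality formula \eqref{bocherersformulaeq} to identify the resulting power series with the asserted ratio of $L$-factors.
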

\begin{proof}
The calculation is similar to that in the proof of Lemma \ref{convergencelemma}. By \eqref{cartaneq2c}, \eqref{Qninveq2} and \eqref{convergencelemmaeq4},
\begin{align*}
 Z(s;f,v_0)&=\sum_{\substack{e_1, \cdots, e_n\in\Z\\0\leq e_1\leq \cdots \leq e_n}}f(Q_n \cdot {\rm diag}(\varpi^{e_1}, \cdots, \varpi^{e_n},\varpi^{-e_1}, \cdots,\varpi^{-e_n}), 1), s)\int\limits_{K_{e_1, \cdots, e_n}}\pi(h)v_0 \, dh\\
  &=\sum_{\substack{e_1, \cdots, e_n\in\Z\\0\leq e_1\leq \cdots \leq e_n}}\Big(\chi(\varpi)q^{-(2n+1)(s+1/2)}\Big)^{e_1+\cdots+e_n}\,T_{e_1,\ldots,e_n}v_0,
\end{align*}
where $T_{e_1,\ldots,e_n}v_0$ is the Hecke operator introduced in Sect.~\ref{convsec}. The restriction of $\pi$ to $\Sp_{2n}(F)$ equals the spherical constituent of $\chi_1\times\ldots\times\chi_n\rtimes1$, which has Satake parameters $\alpha_i=\chi_i(\varpi)$. The assertion now follows from \eqref{Heckesatakeactioneq} and \eqref{bocherersformulaeq}.
\end{proof}
\subsection{Ramified places}\label{s:badplaces}
Now we deal with the ramified places. For a non-negative integer $m$, let $\Gamma_{2n}(\p^m)= \{g \in \Sp_{2n}(\OF) :  g \equiv I_{2n}\bmod \p^m\}$.

\begin{lemma}\label{keylemmabadplaces}
 Let $m$ be a positive integer. Let $p \in P_{4n}(F)$ and $h \in \Sp_{2n}(F)$ be such that $Q_n^{-1} p\,Q_n (h, 1) \in \Gamma_{4n}(\p^m)$. Then $h \in \Gamma_{2n}(\p^{m})$ and $p\in P_{4n}(F)\cap\Gamma_{4n}(\p^m)$.
\end{lemma}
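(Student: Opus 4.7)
The plan is to reduce the hypothesis to a single matrix congruence $pT\equiv I_{4n}\pmod{\p^m}$ and then extract the desired congruences block by block. First, I would note that $X:=Q_n^{-1}p\,Q_n(h,1)\in\Gamma_{4n}(\p^m)\subset\Sp_{4n}(\OF)$ has similitude $1$, and since $Q_n,Q_n^{-1},(h,1)$ all have similitude $1$, multiplicativity forces $\mu_{2n}(p)=1$. Thus $p\in P_{4n}(F)\cap\Sp_{4n}(F)$, and in $2n\times 2n$ block form $p=\mat{\mathcal{A}}{\mathcal{B}}{0}{\mathcal{D}}$ with $\mathcal{D}={}^t\mathcal{A}^{-1}$. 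Next, since $Q_n\in\Sp_{4n}(\OF)$, conjugation by $Q_n$ preserves $\Gamma_{4n}(\p^m)$, so the hypothesis is equivalent to $p\cdot T\in\Gamma_{4n}(\p^m)$, where $T:=Q_n(h,1)Q_n^{-1}$. A direct computation, using the explicit form of $Q_n^{-1}$, gives (in $n\times n$ block notation, with $h=\mat{A}{B}{C}{D}$)
\[
T=\left[\begin{smallmatrix}A&B&-B&0\\0&I_n&0&0\\-C&I_n-D&D&0\\A-I_n&B&-B&I_n\end{smallmatrix}\right].
\]

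The core step is to analyze the bottom two block-rows of $pT\equiv I_{4n}\pmod{\p^m}$. Since the lower-left $2n\times 2n$ block of $p$ vanishes, these rows equal $\mathcal{D}$ times the bottom two block-rows of $T$. Writing $\mathcal{D}=\mat{\alpha}{\beta}{\gamma}{\delta}$ in $n\times n$ blocks and matching with the bottom two block-rows of $I_{4n}$, one obtains eight block equations modulo $\p^m$, which I label (i)--(viii) (with (i)--(iv) coming from row~3 and (v)--(viii) from row~4). Equations (iv) and (viii) read $\beta\equiv 0$ and $\delta\equiv I_n$ directly; adding (vi) and (vii) cancels $D$ and $B$, giving $\gamma\equiv 0$; similarly adding (ii) and (iii) gives $\alpha\equiv I_n$. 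At this point $\alpha,\delta\in I_n+\p^m M_n(\OF)\subset\GL_n(\OF)$ while $\beta,\gamma\in\p^m M_n(\OF)$. Equation (vii), rewritten as $\delta B=\gamma D+O(\p^m)$, then yields $B=\mu_1 D+\nu_1$ with $\mu_1,\nu_1\in\p^m M_n(\OF)$, and analogously (v) gives $A-I_n=\mu_2 C+\nu_2$ with $\mu_2,\nu_2\in\p^m M_n(\OF)$. Substituting $B=\mu_1 D+\nu_1$ into (iii) and absorbing cross terms of order $\p^{2m}$, one gets $(\alpha-\beta\mu_1)D\equiv I_n\pmod{\p^m}$; since $\alpha-\beta\mu_1\equiv I_n$ is invertible over $\OF$, inversion yields $D\equiv I_n\pmod{\p^m}$, hence $B\equiv 0\pmod{\p^m}$. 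Substituting $A-I_n=\mu_2 C+\nu_2$ into (i) gives $(\alpha-\beta\mu_2)C\equiv 0\pmod{\p^m}$, so $C\equiv 0\pmod{\p^m}$ and then $A\equiv I_n\pmod{\p^m}$. Thus $h\equiv I_{2n}\pmod{\p^m}$, i.e., $h\in\Gamma_{2n}(\p^m)$.

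Finally, once $h\in\Gamma_{2n}(\p^m)$ is established, the embedded element $(h,1)$ lies in $\Gamma_{4n}(\p^m)$, and therefore $T=Q_n(h,1)Q_n^{-1}\in\Gamma_{4n}(\p^m)$ as well. Hence $p=(pT)\,T^{-1}$ is a product of two elements of the group $\Gamma_{4n}(\p^m)$, which forces $p\in\Gamma_{4n}(\p^m)\cap P_{4n}(F)$, as desired. The main technical obstacle is the bootstrapping in the middle paragraph: a priori the entries of $\mathcal{D}$ and of $h$ only lie in $F$, so the purely structural observations $\beta,\gamma\in\p^m M_n(\OF)$ and $\alpha,\delta\in\GL_n(\OF)$ (which come from the $I_{4n}$-matching \emph{before} any information about $A,B,C,D$ is extracted) are what legitimately let one invert the near-identity factors and convert equations of the form $\delta X\equiv Y\pmod{\p^m}$ into genuine $\p^m$-congruences for the entries of $h$.
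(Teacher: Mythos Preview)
Your proof is correct and follows essentially the same route as the paper: rewrite the hypothesis as $pT\in\Gamma_{4n}(\p^m)$ with $T=Q_n(h,1)Q_n^{-1}$, read off $\mathcal{D}\equiv I_{2n}$ from the bottom two block rows, deduce $h\in\Gamma_{2n}(\p^m)$, and then conclude $p\in\Gamma_{4n}(\p^m)$. The only difference is in the penultimate step: where you bootstrap through equations (i)--(viii), the paper simply multiplies $pT$ on the left by $p^{-1}$ (whose lower-right block $\mathcal{D}^{-1}$ is now known to lie in $I_{2n}+M_{2n}(\p^m)$) and reads the congruences for $A,B,C,D$ directly off the lower-left $2n\times 2n$ block $\mat{-C}{I_n-D}{A-I_n}{B}$ of $T$.
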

\begin{proof}
Since $Q_n$ normalizes $\Gamma_{4n}(\p^m)$, the hypothesis is equivalent to $p\,Q_n (h, 1) Q_n^{-1}\in \Gamma_{4n}(\p^{m})$. Write $p=\mat{*}{*}{}{g}$ with $g=\mat{g_1}{g_2}{g_3}{g_4}\in\GL_{2n}(F)$ and $h=\mat{A}{B}{C}{D}$. Then
\begin{equation}\label{keylemmabadplaceseq1}
 \left[\begin{smallmatrix}*&*&*&*\\ *&*&*&*\\&&g_1&g_2\\&&g_3&g_4\end{smallmatrix}\right]\left[\begin{smallmatrix}A&&-B\\&I_n\\-C\:&\,I_n-D\,&\:D\\A-I_n&B&-B&\:I_n\end{smallmatrix}\right]\in\Gamma_{4n}(\p^m).
\end{equation}
From the last two rows and last three columns we get \begin{alignat}{3}\label{keylemmabadplaceseq2}
 g_1(I_n-D)+g_2B&\in M_n(\p^m),&\qquad g_1D-g_2B&\in I_n+M_n(\p^m),&\qquad g_2&\in M_n(\p^m),\\
 g_3(I_n-D)+g_4B&\in M_n(\p^m),&g_3D-g_4B&\in M_n(\p^m),&g_4&\in I_n+M_n(\p^m),
\end{alignat}
where $M_n(\p^m)$ is the set of $n\times n$ matrices with entries in $\p^m$.
Hence $g\in I_{2n}+M_{2n}(\p^m)=\Gamma_{2n}(\p^m)$. Multiplying \eqref{keylemmabadplaceseq1} from the left by $p^{-1}$
and looking at the lower left block, we see that $\mat{-C}{I_n-D}{A-I_n}{B}\in M_{2n}(\p^m)$, i.e., $h\in\Gamma_{2n}(\p^m)$. Then it follows from \eqref{keylemmabadplaceseq1} that $p\in\Gamma_{4n}(\p^m)$.
\end{proof}

Let $m$ be a \emph{positive}  integer such that $\chi |_{(1+\p^m)\cap \OF^\times} = 1$. Let $f(g,s)$ be the unique function on $G_{4n}(F) \times \C$ such that
\begin{enumerate}
 \item $f(pk, s)  = \chi(p) \delta_{P_{4n}}(p)^{s + \frac12} $ for all $p\in P_{4n}(F)$ and $k\in \Gamma_{4n}(\p^m)$.
 \item $f(g, s) = 0$ if $g \notin  P_{4n}(F)\Gamma_{4n}(\p^m)$.
\end{enumerate}
It is easy to see that $f$ is well-defined. Evidently, $f\in I(\chi,s)$.
Furthermore, for each $h \in G_{4n}(\OF)$, define $f^{(h)} \in I(\chi,s)$ by the equation
$$
 f^{(h)} (g,s) = f(gh^{-1},s).
$$
\begin{proposition}\label{prop:badplaces}
 Let $\pi$ be any irreducible admissible representation of $G_{2n}(F)$. Let $m$ be a positive integer such that $\chi |_{(1+\p^m)\cap \OF^\times} = 1$ and such that there exists a vector $\phi$ in $\pi$ fixed by $\Gamma_{2n}(\p^m)$. Let $\tau \in \OF^\times$ and $Q_\tau := \mat{\tau I_{2n}}{}{}{I_{2n}}Q_n \mat{\tau^{-1} I_{2n}}{}{}{I_{2n}}$. Then $Z(s;f^{(Q_\tau)}, \phi) =\chi(\tau)^{-n} r_m \phi$, where $r_m=\mathrm{vol}(\Gamma_{2n}(\p^m))$ is a non-zero rational number depending only on $m$.
\end{proposition}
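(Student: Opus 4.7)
The plan is to show that, as a function of $h\in\Sp_{2n}(F)$, the integrand $f(Q_n(h,1)Q_\tau^{-1},s)$ in \eqref{Rsflocaleq} is supported on $\Gamma_{2n}(\p^m)$ and equals the constant $\chi(\tau)^{-n}$ there. Granting this, the conclusion is immediate: since $\pi(h)\phi=\phi$ for $h\in\Gamma_{2n}(\p^m)$, pulling $\chi(\tau)^{-n}$ outside the integral gives
\begin{equation*}
Z(s;f^{(Q_\tau)},\phi)\;=\;\chi(\tau)^{-n}\,\vol(\Gamma_{2n}(\p^m))\,\phi,
\end{equation*}
and $r_m:=\vol(\Gamma_{2n}(\p^m))=[\Sp_{2n}(\OF):\Gamma_{2n}(\p^m)]^{-1}$ is a positive rational by our normalization of the Haar measure on $\Sp_{2n}(F)$.

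The key group-theoretic input is the following factorization. Set $\tilde t:=\mat{\tau I_n}{0}{0}{I_n}\in G_{2n}(F)$ (multiplier $\tau$). A direct inspection of the embedding \eqref{embedding-defn} shows that the block-diagonal element $t:=\mat{\tau I_{2n}}{0}{0}{I_{2n}}\in G_{4n}(F)$ coincides with $(\tilde t,\tilde t)\in H_{2n,2n}$, so $Q_\tau=t\,Q_n\,t^{-1}$. Using the decomposition $(h,1)=(\tilde t,\tilde t)(\tilde t^{-1}h,\tilde t^{-1})$ in $H_{2n,2n}$ together with part (iii) of Lemma~\ref{lemmaqrprop} applied to $(\tilde t,\tilde t)$, I expect to derive the identity
\begin{equation*}
 Q_n(h,1)\,Q_\tau^{-1}\;=\;p_1\cdot Q_n(h',1)\,Q_n^{-1}\cdot t^{-1},
\end{equation*}
where $h':=\tilde t^{-1}h\tilde t\in\Sp_{2n}(F)$ and $p_1:=Q_n(\tilde t,\tilde t)Q_n^{-1}\in P_{4n}(F)$ satisfies $d(p_1)=1$, hence $\chi(p_1)=\delta_{P_{4n}}(p_1)=1$.

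Because $\tau\in\OF^\times$, conjugation by $t^{\pm1}$ and $Q_n^{\pm1}$ preserves $\Gamma_{4n}(\p^m)$, and conjugation by $\tilde t$ preserves $\Gamma_{2n}(\p^m)$. Using this, the condition $Q_n(h,1)Q_\tau^{-1}\in P_{4n}(F)\Gamma_{4n}(\p^m)$ becomes equivalent to $Q_n(h',1)Q_n^{-1}\in P_{4n}(F)\Gamma_{4n}(\p^m)$; writing such a decomposition as $Q_n(h',1)Q_n^{-1}=pk$ with $p\in P_{4n}(F)$, $k\in\Gamma_{4n}(\p^m)$ and rearranging gives $Q_n^{-1}p^{-1}Q_n(h',1)\in\Gamma_{4n}(\p^m)$, so Lemma~\ref{keylemmabadplaces} forces $h'\in\Gamma_{2n}(\p^m)$. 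Conjugation by $\tilde t$ then yields $h\in\Gamma_{2n}(\p^m)$; conversely, for any such $h$ the displayed identity directly exhibits $Q_n(h,1)Q_\tau^{-1}=(p_1t^{-1})\cdot k_1$ for some $k_1\in\Gamma_{4n}(\p^m)$. A short computation of the character $d$ on the Siegel Levi yields $d(t^{-1})=\tau^{-n}$, and since $|\tau|=1$ the value of $f$ on the support is $\chi(\tau)^{-n}$, as claimed.

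The step I expect to require the most care is the factorization identity in the second paragraph: Lemma~\ref{lemmaqrprop}(iii) must be applied to $(\tilde t,\tilde t)$ where $\tilde t$ lies in $G_{2n}\setminus\Sp_{2n}$ (having multiplier $\tau$), which is allowed by the statement but not the typical use; moreover, the residual factor $t^{-1}$ on the right-hand side of the identity cannot be absorbed into $p_1$ and is precisely what produces the twist $\chi(\tau)^{-n}$ in the final formula. The rest of the argument is bookkeeping with congruence subgroups, made clean by the hypothesis $\tau\in\OF^\times$.
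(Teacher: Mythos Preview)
Your proof is correct and follows essentially the same route as the paper's: both arguments use Lemma~\ref{lemmaqrprop}(iii) to handle the factor $Q_n(\tilde t,\tilde t)Q_n^{-1}$, invoke Lemma~\ref{keylemmabadplaces} to pin down the support as $\Gamma_{2n}(\p^m)$, and read off the value $\chi(\tau)^{-n}$ from $d(t^{-1})=\tau^{-n}$. The only cosmetic difference is that you pass through the conjugated variable $h'=\tilde t^{-1}h\tilde t$ before applying Lemma~\ref{keylemmabadplaces}, whereas the paper manipulates the equation $Q_n(h,1)Q_\tau^{-1}=p\gamma$ directly into the form $Q_n^{-1}p'Q_n(h,1)\in\Gamma_{4n}(\p^m)$ and applies the lemma to $h$ itself.
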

\begin{proof} By definition, $$
 Z(s;f^{(Q_\tau)}, \phi)=\int\limits_{\Sp_{2n}(F)} f(Q_n \cdot (h, 1)Q_\tau^{-1}, s) \pi(h)\phi\,dh.$$ The integrand vanishes unless \begin{equation}\label{e:4.3}Q_n \cdot (h, 1)Q_\tau^{-1}= p \gamma\end{equation} for some $p \in P_{4n}(F)$ and $\gamma \in \Gamma_{4n}(\p^m)$. Put $t = \mat{\tau I_n}{}{}{I_n}$. Then \eqref{e:4.3} is equivalent to $$(Q_n (t,t)Q_n^{-1}) (t^{-1}, t^{-1}) p^{-1} Q_n(h,1)Q_n^{-1}  =  \gamma'$$
for some $\gamma' \in \Gamma_{4n}(\p^m)$. Put $p' = (Q_n (t,t)Q_n^{-1}) (t^{-1}, t^{-1}) p^{-1}$. By the last part of Lemma \ref{lemmaqrprop}, $p' \in P_{4n}(F)$ with $\chi(p') = \chi(p)^{-1} \chi(\tau)^{-n}.$ Now, using  Lemma \ref{keylemmabadplaces},  we conclude that $\chi(p) = \chi(\tau)^{-n}$ and $h \in \Gamma_{2n}(\p^m)$. So we obtain \begin{equation}
\int\limits_{\Sp_{2n}(F)} f(Q_n  (h, 1)Q^{-1}, s) \pi(h)\phi\,dh=\chi(\tau)^{-n} \int\limits_{\Gamma_{2n}(\p^m)} \pi(h)\phi\,dh.
\end{equation}
This last integral equals $\text{vol}(\Gamma_{2n}(\p^m))\phi$, completing the proof.
\end{proof}

We end this subsection with an alternate definition of the function $f(g,s)$ defined above. Define the Siegel-type congruence subgroup $\Gamma_{0,4n}(\p^m) = \{g \in \Sp_{4n}(\OF) :  g \equiv \mat{*}{*}{0_{2n}}{*}\bmod \p^m\}$, and for any $k \in  \Gamma_{0,4n}(\p^m)$, we denote $\chi(k) = \chi(\det(A))$ if $k = \mat{A}{B}{C}{D}$. It is easy to check now that $f(g,s)$  is the unique function such that
\begin{enumerate}
 \item $f(pk, s)  = \chi(p) \chi(k) \delta_{P_{4n}}(p)^{s + \frac12}$ for all $p \in P_{4n}(F) $ and $k\in \Gamma_{0,4n}(\p^m)$.
 \item $f(g, s) = 0$ if $g \notin  P_{4n}(F)\Gamma_{0,4n}(\p^m)$.
\end{enumerate}
The main difference between the above description and our original definition is that it uses the Siegel type congruence subgroup rather than the principal congruence subgroup. The fact that makes this alternate description possible is that $P_{4n}(F)\Gamma_{0,4n}(\p^m) = P_{4n}(F)\Gamma_{4n}(\p^m)$. In particular, this shows that our local section $f(g,s)$ is essentially identical to that used by Shimura \cite{shimura83,shibook2} in his work on Eisenstein series, which will be a key point for us later on.

\section{The local integral at a real place}\label{holdiscsec}
\subsection{Holomorphic discrete series representations}
We provide some preliminaries on holomorphic discrete series representations of $\Sp_{2n}(\R)$. We fix the standard maximal compact subgroup
$$
 K=K^{(n)}=\{\mat{A}{B}{-B}{A}\in\GL_{2n}(\R):\:A\,^t\!A+B\,^tB=1,\;A\,^tB=B\,^t\!A\}.
$$
We have $K\cong U(n)$ via $\mat{A}{B}{-B}{A}\mapsto A+iB$. Let
$$
 j(g,Z)=\det(CZ+D)\qquad\text{for }g=\mat{A}{B}{C}{D}\in\Sp_{2n}(\R)
$$
and $Z$ in the Siegel upper half space $\mathbb{H}_n$. Then, for any integer $k$, the map
\begin{equation}\label{scalarKtypekeq}
 K\ni g\longmapsto j(g,I)^{-k},\qquad\text{where }I=iI_n,
\end{equation}
is a character of $K$.

Let $\mathfrak{h}$ be the compact Cartan subalgebra and $e_1,\ldots,e_n$ the linear forms on the complexification $\mathfrak{h}_\C$ defined in \cite{ASch}. A system of positive roots is given by $e_i\pm e_j$ for $1\leq i<j\leq n$ and $2e_j$ for $1\leq j\leq n$. The positive compact roots are the $e_i-e_j$ for $1\leq i<j\leq n$. The $K$-types are parametrized by the analytically integral elements $k_1e_1+\ldots+k_ne_n$, where the $k_i$ are integers with $k_1\geq\ldots\geq k_n$. We write $\rho_{\mathbf{k}}$, $\mathbf{k}=(k_1,\ldots,k_n)$, for the $K$-type with highest weight $k_1e_1+\ldots+k_ne_n$. If $k_1=\ldots=k_n=k$, then $\rho_{\mathbf{k}}$ is the $K_\infty$-type given in \eqref{scalarKtypekeq}; we simply write $\rho_k$ in this case.

The holomorphic discrete series representations of $\Sp_{2n}(\R)$ are parametrized by elements $\lambda=\ell_1e_1+\ldots +\ell_ne_n$ with integers $\ell_1>\ldots>\ell_n>0$. The representation corresponding to the \emph{Harish-Chandra parameter} $\lambda$ contains the $K$-type $\rho_{\mathbf{k}}$, where $\mathbf{k}=\lambda+\sum_{j=1}^nje_j$, with multiplicity one; see Theorem 9.20 of \cite{knapp1986}. We denote this representation by $\dot\pi_\lambda$ or by $\pi_{\mathbf{k}}$; sometimes one or the other notation is more convenient. If $\mathbf{k}=(k,\ldots,k)$ with a positive integer $k>n$, then we also write $\pi_k$ for $\pi_{\mathbf{k}}$.

Let $G_{2n}(\R)^+$ be the index two subgroup of $G_{2n}(\R)$ consisting of elements with positive multiplier. We may extend a holomorphic discrete series representation $\dot\pi_\lambda$ of $\Sp_{2n}(\R)$ in a trivial way to $G_{2n}(\R)^+\cong\Sp_{2n}(\R)\times\R_{>0}$. This extension induces irreducibly to $G_{2n}(\R)$. We call the result a holomorphic discrete series representation of $G_{2n}(\R)$ and denote it by the same symbol $\dot\pi_\lambda$ (or $\pi_{\mathbf{k}}$). These are the archimedean components of the automorphic representations corresponding to vector-valued holomorphic Siegel modular forms of degree $n$.

\begin{lemma}\label{holdiscserembeddinglemma}
 Let $\lambda=\ell_1e_1+\ldots+\ell_ne_n$ with integers $\ell_1>\ldots>\ell_n>0$.
 \begin{enumerate}
  \item The holomorphic discrete series representation $\dot\pi_\lambda$ of $\Sp_{2n}(\R)$ embeds into
   \begin{equation}\label{holdiscserembeddinglemmaeq1}
    \sgn^{\ell_n+n}|\cdot|^{\ell_n}\times\ldots\times\sgn^{\ell_1+1}|\cdot|^{\ell_1}\rtimes1,
   \end{equation}
   and in no other principal series representation of $\Sp_{2n}(\R)$.
  \item The holomorphic discrete series representation $\dot\pi_\lambda$ of $G_{2n}(\R)$ embeds into
   \begin{equation}\label{holdiscserembeddinglemmaeq2}
    \sgn^{\ell_n+n}|\cdot|^{\ell_n}\times\ldots\times\sgn^{\ell_1+1}|\cdot|^{\ell_1}\rtimes\sgn^\varepsilon|\cdot|^{-\frac12(\ell_1+\ldots+\ell_n)},
   \end{equation}
   where $\varepsilon$ can be either $0$ or $1$, and in no other principal series representation of $G_{2n}(\R)$.
 \end{enumerate}
\end{lemma}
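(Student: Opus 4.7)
My plan is to prove (i) via Casselman's subrepresentation theorem combined with analysis of infinitesimal characters and minimal $K$-types, and to deduce (ii) by handling the extension of $\dot\pi_\lambda$ from $\Sp_{2n}(\R)$ to $G_{2n}(\R)$.

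For (i), Casselman's subrepresentation theorem guarantees that the $(\mathfrak{g},K)$-module $\dot\pi_\lambda$ embeds as a submodule of some principal series $\sgn^{\varepsilon_1}|\cdot|^{s_1}\times\cdots\times\sgn^{\varepsilon_n}|\cdot|^{s_n}\rtimes 1$ induced from the Borel. By the Harish-Chandra isomorphism, the infinitesimal character of $\dot\pi_\lambda$ is $\lambda = (\ell_1,\ldots,\ell_n)$, while that of the principal series is the $W(C_n)$-orbit of $(s_1,\ldots,s_n)$; hence $(s_1,\ldots,s_n)$ must be a permutation of $(\pm\ell_1,\ldots,\pm\ell_n)$. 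The signs $\varepsilon_j$ are then pinned down by Frobenius reciprocity: a nonzero $K$-equivariant map from the minimal $K$-type $\rho_\mathbf{k}$ (with $k_j = \ell_j+j$) into the principal series requires that the restriction of the inducing character to $M\cong\{\pm 1\}^n\subset K$ appear in $\rho_\mathbf{k}|_M$, which after tracking the embedding of the Borel yields the congruence $\varepsilon_j\equiv\ell_{n-j+1}+(n-j+1)\pmod{2}$ in each factor.

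To fix the correct permutation and thereby prove uniqueness, I would invoke the structure of the Jacquet module of $\dot\pi_\lambda$ at the Borel, via the Hecht-Schmid character identity or equivalently the theory of leading exponents of matrix coefficients of tempered representations. For a holomorphic discrete series, the leading exponents lie in a specific chamber dictated by the positivity of the noncompact roots, and precisely one Weyl translate of $\lambda$ lies in this chamber, namely the reversed tuple $(\ell_n,\ldots,\ell_1)$ appearing in \eqref{holdiscserembeddinglemmaeq1}. In all other Weyl translates of the same infinitesimal character, $\dot\pi_\lambda$ appears as a quotient or as an intermediate subquotient but not as a submodule, yielding the ``in no other principal series'' conclusion. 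This step is the main obstacle: matching infinitesimal characters and signs alone leaves several candidate principal series, and distinguishing submodule embeddings from quotient/subquotient ones requires the nontrivial machinery of leading exponents, or equivalently Knapp-Zuckerman's classification of discrete series embeddings.

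For (ii), recall $G_{2n}(\R) = G_{2n}(\R)^+ \sqcup (-I_{2n})G_{2n}(\R)^+$ with $G_{2n}(\R)^+ \cong \Sp_{2n}(\R)\times\R_{>0}$ via the multiplier map. The trivial extension of $\dot\pi_\lambda$ across the $\R_{>0}$-factor induces irreducibly to $G_{2n}(\R)$, and the two possible actions on the sign component produce the free parameter $\varepsilon\in\{0,1\}$ in the statement. Inducing from the Borel of $G_{2n}(\R)$ rather than that of $\Sp_{2n}(\R)$ introduces an extra Siegel character $\sigma$ on the scalar $\GL_1$-factor in \eqref{GnBorelinducedeq}; the triviality of the central character on $\R_{>0}$ forces $\sigma = \sgn^\varepsilon|\cdot|^{-(\ell_1+\cdots+\ell_n)/2}$. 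The uniqueness assertion in (ii) then transfers from (i) by restriction to $\Sp_{2n}(\R)$.
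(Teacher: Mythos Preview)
Your approach for (i) is substantially more ambitious than the paper's, which simply cites the main result of Yamashita (1989) for both the embedding and its uniqueness, with no further argument. Your plan---Casselman's subrepresentation theorem, infinitesimal character matching, then leading-exponent analysis---is in effect a sketch of how one would reprove Yamashita's theorem in the holomorphic case, and the broad outline is sound.

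One caution on a specific step: your Frobenius reciprocity argument for pinning down the signs $\varepsilon_j$ is weaker than you claim. The condition it yields is only that the character $\chi|_M$ of $M\cong\{\pm1\}^n$ occur in $\rho_{\mathbf{k}}|_M$, but for a non-scalar minimal $K$-type this restriction generally contains several distinct $M$-characters (the weights of $\rho_{\mathbf{k}}$ need not share a common parity pattern), so Frobenius reciprocity alone does not determine the $\varepsilon_j$. In practice both the signs and the Weyl-chamber ordering are pinned down simultaneously by the leading-exponent or $\mathfrak{n}$-homology analysis you already invoke in the next paragraph; the fix is simply to absorb the sign determination into that step rather than treating it separately via the $K$-type. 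With that adjustment the strategy is correct. What your route buys is a self-contained argument from general principles; what the paper's one-line citation buys is brevity, since Yamashita's theorem packages exactly this machinery.

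For (ii) your deduction from (i) is correct and more detailed than the paper's, which merely records that $\dot\pi_\lambda$ on $G_{2n}(\R)$ is invariant under twisting by $\sgn$, so that both values of $\varepsilon$ occur.
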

\begin{proof}
i) follows from the main result of \cite{Yamashita1989}. Part ii) can be deduced from i), observing that the holomorphic discrete series representations of $G_{2n}(\R)$ are invariant under twisting by the sign character.
\end{proof}

\begin{lemma}\label{degprincserhollemma}
 Let $k$ be a positive integer. Consider the degenerate principal series representation of $G_{2n}(\R)$ given by
 \begin{equation}\label{degprincserhollemmaeq1}
  J(s):=\sgn^k|\cdot|^{s-\frac{n+1}2}\rtimes\sgn^\varepsilon|\cdot|^{\frac{n(n+1)}4-\frac{ns}2},
 \end{equation}
 where $\varepsilon\in\{0,1\}$. Then $J(s)$ contains the holomorphic discrete series representation $\pi_k$ of $G_{2n}(\R)$ as a subrepresentation if and only if $s=k$.
\end{lemma}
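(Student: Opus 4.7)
The plan is to handle the two directions of the ``if and only if'' separately, using induction in stages for necessity and a $K$-type multiplicity argument for sufficiency.

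For the ``only if'' direction, I would apply induction in stages. The character $\chi:=\sgn^k|\cdot|^{s-(n+1)/2}$ of $\GL_n(\R)$ (regarded via composition with $\det$) embeds as a subrepresentation of the Borel-induced principal series $\chi|\cdot|^{-(n-1)/2}\times\cdots\times\chi|\cdot|^{(n-1)/2}$ of $\GL_n(\R)$, by the classical subrepresentation structure of a unitary character of $\GL_n(\R)$. Applying (exact) parabolic induction from the Siegel parabolic to $G_{2n}(\R)$ then yields
\[
 J(s)\hookrightarrow \sgn^k|\cdot|^{s-n}\times\sgn^k|\cdot|^{s-n+1}\times\cdots\times\sgn^k|\cdot|^{s-1}\rtimes\sgn^\varepsilon|\cdot|^{n(n+1)/4-ns/2}.
\]
Call the full principal series on the right $\pi'(s)$. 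If $\pi_k\hookrightarrow J(s)$, then $\pi_k\hookrightarrow\pi'(s)$. On the other hand, Lemma~\ref{holdiscserembeddinglemma}(ii) applied with Harish-Chandra parameter $\ell_j=k-j$ (so $\mathbf{k}=(k,\ldots,k)$) shows $\pi_k\hookrightarrow\pi'(k)$ and that $\pi'(k)$ is the unique such principal series. Comparing the multisets of inducing characters (equivalently, the infinitesimal characters) forces $\pi'(s)\cong\pi'(k)$, hence $s=k$.

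For the ``if'' direction, at $s=k$ both $J(k)$ and $\pi_k$ are subrepresentations of $\pi'(k)$. I would show $\pi_k\subseteq J(k)$ via a multiplicity argument for the minimal $K$-type $\rho_k=\det^k$ of $\pi_k$. By Frobenius reciprocity combined with the Iwasawa decomposition, $J(k)|_K\cong\Ind_{K\cap P_{2n}}^K(\chi|_{K\cap P_{2n}})$; since $K\cap P_{2n}\cong O(n)$ with $\chi|_{O(n)}=\det^k=\rho_k|_{O(n)}$, the $K$-type $\rho_k$ appears in $J(k)$ with multiplicity one. Similarly, $\pi'(k)|_K\cong\Ind_{K\cap B}^K(\chi_B|_{K\cap B})$, and the compact torus $K\cap B\cong\{\pm1\}^n$ receives both $\rho_k$ and $\chi_B$ as the common character $(\epsilon_1,\ldots,\epsilon_n)\mapsto(\epsilon_1\cdots\epsilon_n)^k$, so $\rho_k$ has multiplicity one in $\pi'(k)$ as well.

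Since the $\rho_k$-isotypic component of $\pi'(k)$ is one-dimensional, the two subrepresentations $\pi_k$ and $J(k)$ must share this unique line, giving $\pi_k\cap J(k)\neq 0$; the irreducibility of $\pi_k$ then forces $\pi_k\subseteq J(k)$. The main technical obstacle is the multiplicity-one statement for $\rho_k$ inside the Borel-induced $\pi'(k)$: one must carefully identify $K\cap B$ with the compact torus $\{\pm1\}^n$, check the character-matching calculation, and verify that this agreement holds independently of the central-character choice $\varepsilon\in\{0,1\}$ that distinguishes the $\GSp_{2n}$ setting from the $\Sp_{2n}$ one.
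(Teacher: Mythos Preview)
Your proposal is correct and follows essentially the same approach as the paper. For the ``only if'' direction, the paper simply says ``by infinitesimal character considerations,'' which is exactly your argument via Lemma~\ref{holdiscserembeddinglemma}(ii); for the ``if'' direction, the paper writes down the explicit weight-$k$ vector $f_s\in J(s)$ given by the formula~\eqref{degprincserhollemmaeq3} and observes it is the unique (up to scalars) vector of weight $k$ in the larger Borel-induced $J'(s)$, which is precisely the multiplicity-one statement you establish abstractly via Frobenius reciprocity on the compact picture.
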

\begin{proof}
By infinitesimal character considerations, we only need to prove the ``if'' part. Since $\pi_k$ is invariant under twisting by ${\rm sgn}$, we may assume that $\varepsilon=0$. Consider the Borel-induced representation
\begin{equation}\label{degprincserhollemmaeq2}
 J'(s):=\sgn^k|\cdot|^{s-n}\times\sgn^k|\cdot|^{s-n+1}\times\ldots\times\sgn^k|\cdot|^{s-1}\rtimes|\cdot|^{\frac{n(n+1)}4-\frac{ns}2}.
\end{equation}
By Lemma \ref{holdiscserembeddinglemma} ii), $\pi_k$ is a subrepresentation of $J'(k)$. Since $|\cdot|^{s-n}\times|\cdot|^{s-n+1}\times\ldots\times|\cdot|^{s-1}$ contains the trivial representation of $\GL_n(\R)$ twisted by $|\cdot|^{s-\frac{n+1}2}$, it follows that $J(s)\subset J'(s)$. Let $f_s$ be the function on $G_{2n}(\R)$ given by
\begin{equation}\label{degprincserhollemmaeq3}
 f_s(\mat{A}{*}{}{u\,^t\!A^{-1}}g,s)= \sgn^k(\det(A)) |u|^{-\frac{ns}2}|\det(A)|^s\,j(g,I)^{-k}
\end{equation}
for $A\in\GL_n(\R)$, $u\in\R^\times$ and $g\in K$. Then $f_s$ is a well-defined element of $J(s)$. Since $f_s$ is the unique up to multiples vector of weight $k$ in $J'(s)$, $\pi_k$ lies in the subspace $J(k)$ of $J'(k)$.
\end{proof}

Our method to calculate the local archimedean integrals \eqref{Rsflocaleq} will work for holomorphic discrete series representations $\dot\pi_\lambda$, where $\lambda=\ell_1e_1+\ldots+\ell_ne_n$ with $\ell_1>\ldots>\ell_n>0$ satisfies
\begin{equation}\label{lambdaconditioneq}
 \ell_j-\ell_{j-1}\text{ is odd for }2\leq j\leq n.
\end{equation}
Equivalently, we work with the holomorphic discrete series representations $\pi_{\mathbf{k}}$, where $\mathbf{k}=k_1e_1+\ldots+k_ne_n$ with $k_1\ge\ldots\ge k_n>n$ and all $k_i$ of the same parity; this last condition can be seen to be equivalent to \eqref{lambdaconditioneq}. An example for $\lambda$ satisfying \eqref{lambdaconditioneq} is $(k-1)e_1+\ldots+(k-n)e_n$, the Harish-Chandra parameter of $\pi_k$. The next lemma implies that whenever \eqref{lambdaconditioneq} is satisfied, then $\dot\pi_\lambda$ contains a convenient scalar $K$-type to work with.
\begin{lemma}\label{scalarKtypeslemma}
 Assume that $\lambda=\ell_1e_1+\ldots+\ell_ne_n$ with $\ell_1>\ldots>\ell_n>0$ satisfying \eqref{lambdaconditioneq}. Let $m$ be a non-negative, even integer. Then the holomorphic discrete series representation $\dot\pi_\lambda$ contains the $K$-type
 \begin{equation}\label{scalarKtypeslemmaeq1}
  \rho_{\mathbf{m}},\qquad \mathbf{m}=(\ell_1+1+m)(e_1+\ldots+e_n),
 \end{equation}
 with multiplicity one.
\end{lemma}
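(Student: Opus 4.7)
The plan is to exploit the Harish-Chandra decomposition of $\dot\pi_\lambda$ as a $K$-module, and then invoke a classical multiplicity-free branching rule for symmetric matrices. Since the non-compact positive roots $\{e_i+e_j:1\leq i\leq j\leq n\}$ all pair positively with the minimal $K$-type weight $\mathbf{k}_0=\lambda+\sum_jje_j=(\ell_1+1,\,\ell_2+2,\ldots,\ell_n+n)$, the HDS $\dot\pi_\lambda$ is a lowest-weight $(\mathfrak{g}_\C,K)$-module: its minimal $K$-type $V_{\mathbf{k}_0}$ sits at the bottom of $\dot\pi_\lambda$ and is annihilated by the span $\mathfrak{p}^-$ of the negative non-compact root spaces, and the whole $\dot\pi_\lambda$ is generated by $V_{\mathbf{k}_0}$ under $U(\mathfrak{g}_\C)$. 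Because $\ell_1>\cdots>\ell_n>0$ places $\lambda$ in the range where the associated generalized Verma module is irreducible, PBW yields the $K$-equivariant isomorphism
\[
\dot\pi_\lambda\big|_K \;\cong\; V_{\mathbf{k}_0}\otimes S(\mathfrak{p}^+).
\]

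Next I would identify $\mathfrak{p}^+\cong\mathrm{Sym}^2(\C^n)$ as a $K=U(n)$-module and invoke the classical multiplicity-free decomposition
\[
S(\mathrm{Sym}^2\C^n)\;\cong\;\bigoplus_{\mu}V_{2\mu},
\]
where $\mu=(\mu_1\geq\cdots\geq\mu_n\geq 0)$ runs over all partitions of length $\leq n$ and $V_{2\mu}$ denotes the irreducible $U(n)$-representation with highest weight $2\mu_1e_1+\cdots+2\mu_ne_n$; this is a standard consequence of $(\GL_n,\GL_n)$-Howe duality on symmetric matrices. Since $\mathbf{m}=N(e_1+\cdots+e_n)$ with $N:=\ell_1+1+m$, the $K$-type $\rho_{\mathbf{m}}$ is the one-dimensional character $\det^N$ of $U(n)$, and Frobenius reciprocity gives
\[
\dim\mathrm{Hom}_K\bigl(\rho_{\mathbf{m}},\dot\pi_\lambda\bigr) \;=\; \dim\mathrm{Hom}_K\bigl(V_{\mathbf{k}_0}^\vee\otimes\det\nolimits^N,\;S(\mathfrak{p}^+)\bigr).
\]
Since $V_{\mathbf{k}_0}^\vee$ has highest weight $(-k_{0,n},\ldots,-k_{0,1})$, the tensor product on the left is the irreducible $K$-module of highest weight $\nu:=(N-k_{0,n},\,N-k_{0,n-1},\,\ldots,\,N-k_{0,1})$, so the desired multiplicity equals the multiplicity of this $K$-type in $S(\mathfrak{p}^+)$, which is $1$ if $\nu=2\mu$ for some partition $\mu$ of length $\leq n$ and $0$ otherwise.

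It remains to verify that $\nu$ is of the form $2\mu$. Non-increasing monotonicity of the entries of $\nu$ follows from $\mathbf{k}_0$ being a partition, and non-negativity from the last entry being $N-k_{0,1}=m\geq 0$. The crux is the parity check: the $j$-th entry of $\nu$ equals $m+(\ell_1-\ell_{n+1-j})+(j-n)$, and hypothesis~\eqref{lambdaconditioneq} together with telescoping yields $\ell_1-\ell_i\equiv i-1\pmod 2$, so modulo $2$ this entry is $m+(n-j)+(j-n)\equiv m\equiv 0$ since $m$ is even. Hence $\mu:=\nu/2$ is a valid partition of length $\leq n$, uniquely determined, and the required multiplicity is exactly one. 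The main technical step to nail down is the $K$-isomorphism $\dot\pi_\lambda|_K\cong V_{\mathbf{k}_0}\otimes S(\mathfrak{p}^+)$ -- specifically, that the generalized Verma module is actually irreducible in the relevant range -- but this is automatic because $\ell_n>0$ puts us outside the walls where reducibility of the scalar generalized Verma module can occur.
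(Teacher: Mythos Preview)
Your argument is correct and takes a genuinely different route from the paper's proof. The paper also starts from the $K$-module isomorphism $\dot\pi_\lambda\cong S(\mathfrak{p}_\C^+)\otimes\rho_{\mathbf{k}_0}$, but uses it only to reduce from general even $m$ to $m=0$ (via an element $D_+\in S(\mathfrak{p}_\C^+)$ of weight $2(e_1+\cdots+e_n)$, citing Johnson and Howe--Kraft); for $m=0$ it then invokes Blattner's multiplicity formula and runs an induction on $n$, reducing the alternating sum over $W_K\cong S_n$ to the analogous sum for $\Sp_{2(n-1)}$. Your approach bypasses both Blattner and the induction entirely: you use the multiplicity-free decomposition $S(\mathrm{Sym}^2\C^n)\cong\bigoplus_\mu V_{2\mu}$ together with tensor--hom adjunction to convert the question into checking that a single explicit weight $\nu$ is an even partition, and the parity condition \eqref{lambdaconditioneq} does exactly this. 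This is shorter and more transparent; it also makes clear in one stroke that the hypothesis \eqref{lambdaconditioneq} and the evenness of $m$ are precisely what force the required parity. Two minor comments: what you call ``Frobenius reciprocity'' is really just tensor--hom adjunction for $K$-modules, and in your final sentence the generalized Verma module in question is not scalar (its inducing $K$-type $V_{\mathbf{k}_0}$ need not be one-dimensional); the irreducibility you need is the standard fact that for Harish-Chandra parameters in the holomorphic discrete series range the parabolic Verma module built from the minimal $K$-type is already irreducible, which is indeed well known (and implicit in the paper's own assertion of the $K$-module isomorphism).
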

\begin{proof}
By Theorem 8.1 of \cite{knapp1986} we need only show that $\rho_{\mathbf{m}}$ occurs in $\dot\pi_\lambda$. We will use induction on $n$. The result is obvious for $n=1$. Assume that $n>1$, and that the assertion has already been proven for $n-1$.

Using standard notations as in \cite{ASch}, we have $\mathfrak{g}_\C=\mathfrak{p}_\C^-\oplus\mathfrak{k}_\C\oplus\mathfrak{p}_\C^+$. The universal enveloping algebra of $\mathfrak{p}_\C^+$ is isomorphic to the symmetric algebra $S(\mathfrak{p}_\C^+)$. We have $\dot\pi_\lambda\cong S(\mathfrak{p}_\C^+)\otimes\rho_\lambda$ as $K$-modules. Let $I$ be the subalgebra of $S(\mathfrak{p}_\C^+)$ spanned by the highest weight vectors of its $K$-types. By Theorem A of \cite{Johnson1980}, there exists in $I$ an element $D_+$ of weight $2(e_1+\ldots+e_n)$. By the main result of \cite{HoweKraft1998}, the space of $K$-highest weight vectors of $\dot\pi_\lambda$ is acted upon freely by $I$. It follows that we need to prove our result only for $m=0$.

We will use the Blattner formula proven in \cite{HechtSchmid1975}. It says that the multiplicity with which $\rho_{\mathbf{m}}$ occurs in $\dot\pi_\lambda$ is given by
\begin{equation}\label{scalarKtypeslemmaeq2}
 {\rm mult}_\lambda(\mathbf{m})=\sum_{w\in W_K}\varepsilon(w)Q(w(\mathbf{m}+\rho_c)-\lambda-\rho_n).
\end{equation}
Here, $W_K$ is the compact Weyl group, which in our case is isomorphic to the symmetric group $S_n$, and $\varepsilon$ is the sign character on $W_K$; the symbols $\rho_c$ and $\rho_n$ denote the half sums of the positive compact and non-compact roots, respectively; and $Q(\mu)$ is the number of ways to write $\mu$ as a sum of positive non-compact roots. In our case
\begin{equation}\label{scalarKtypeslemmaeq3}
 \rho_c=\frac12\sum_{j=1}^n(n+1-2j)e_j,\qquad\rho_n=\frac{n+1}2\sum_{j=1}^ne_j.
\end{equation}
Hence
\begin{align}\label{scalarKtypeslemmaeq4}
 {\rm mult}_\lambda(\mathbf{m})&=\sum_{\sigma\in S_n}\varepsilon(\sigma)Q\Big(\sigma\Big((\ell_1+1+m)\sum_{j=1}^ne_j+\frac12\sum_{j=1}^n(n+1-2j)e_j\Big)-\lambda-\frac{n+1}2\sum_{j=1}^ne_j\Big)\nonumber\\
  &=\sum_{\sigma\in S_n}\varepsilon(\sigma)Q\Big(\sum_{j=1}^n(\ell_1+1+m-\ell_j)e_j-\sigma\Big(\sum_{j=1}^nje_j\Big)\Big).
\end{align}
Now assume that $m=0$. Then
\begin{equation}\label{scalarKtypeslemmaeq5}
 {\rm mult}_\lambda(\mathbf{m})=\sum_{\sigma\in S_n}\varepsilon(\sigma)Q\Big(\sum_{j=1}^n(\ell_1+1-\ell_j)e_j-\sigma\Big(\sum_{j=1}^nje_j\Big)\Big).
\end{equation}
If $\sigma(1)\neq1$, then the coefficient of $e_1$ is negative, implying that $Q(\ldots)=0$. Hence
\begin{align}\label{scalarKtypeslemmaeq6}
 {\rm mult}_\lambda(\mathbf{m})&=\sum_{\substack{\sigma\in S_n\\\sigma(1)=1}}\varepsilon(\sigma)Q\Big(\sum_{j=2}^n(\ell_1+1-\ell_j)e_j-\sigma\Big(\sum_{j=2}^nje_j\Big)\Big)\nonumber\\
 &=\sum_{\substack{\sigma\in S_n\\\sigma(1)=1}}\varepsilon(\sigma)Q\Big(\sum_{j=1}^{n-1}(\ell_1-\ell_{j+1})e_{j+1}-\sigma\Big(\sum_{j=1}^{n-1}je_{j+1}\Big)\Big).
\end{align}
If we set $e'_j=e_{j+1}$ and $m'=\ell_1-\ell_2-1$, then this can be written as
\begin{equation}\label{scalarKtypeslemmaeq7}
 {\rm mult}_\lambda(\mathbf{m})=\sum_{\sigma\in S_{n-1}}\varepsilon(\sigma)Q\Big(\sum_{j=1}^{n-1}(\ell_2+1+m'-\ell_{j+1})e'_j-\sigma\Big(\sum_{j=1}^{n-1}je'_j\Big)\Big).
\end{equation}
We see that this is the formula \eqref{scalarKtypeslemmaeq4}, with $n-1$ instead of $n$ and $m'$ instead of $m$; note that $m'$ is even and non-negative by our hypotheses. There are two different $Q$-functions involved, for $n$ and for $n-1$, but since the argument of $Q$ in \eqref{scalarKtypeslemmaeq6} has no $e_1$, we may think of it as the $Q$-function for $n-1$. Therefore \eqref{scalarKtypeslemmaeq7} represents the multiplicity of the $K^{(n-1)}$-type $(\ell_2+1+m')(e'_1+\ldots+e'_{n-1})$ in the holomorphic discrete series representation $\dot\pi_{\lambda'}$ of $\Sp_{2(n-1)}(\R)$, where $\lambda'=\ell_2e'_1+\ldots+\ell_ne'_{n-1}$. By induction hypothesis, this multiplicity is $1$, completing our proof.
\end{proof}
\subsection{Calculating the integral}\label{reallocalsec}
In the remainder of this section we fix a real place $v$ and calculate the local archimedean integral \eqref{Rsflocaleq} for a certain choice of vectors $f$ and $w$. To ease notation, we omit the subscript $v$. We assume that the underlying representation $\pi$ of $G_{2n}(\R)$ is a holomorphic discrete series representation $\dot\pi_\lambda$, where $\lambda=\ell_1e_1+\ldots+\ell_ne_n$ with $\ell_1>\ldots>\ell_n>0$ satisfies \eqref{lambdaconditioneq}. Set $k=\ell_1+1$. By Lemma \ref{scalarKtypeslemma}, the $K$-type $\rho_k$ appears in $\pi$ with multiplicity $1$. Let $w_\lambda$ be a vector spanning this one-dimensional $K$-type. We choose $w=w_\lambda$ as our vector in the zeta integral $Z(s,f,w)$.

To explain our choice of $f$, let $J(s)$ be the degenerate principal series representation of $G_{4n}(\R)$ defined in Lemma \ref{degprincserhollemma} (hence, we replace $n$ by $2n$ in \eqref{degprincserhollemmaeq1}). We see from \eqref{Ichisaltnoteq} that $I(\sgn^k,s)$ equals $J((2n+1)(s+\frac12))$ for appropriate $\varepsilon\in\{0,1\}$. Let $f_k(\cdot,s)$ be the vector spanning the $K$-type $K^{(2n)}\ni g\longmapsto j(g,I)^{-k}$ and normalized by $f_k(1,s)=1$. Explicitly,
\begin{equation}\label{degprincserhollemmaeq3c}
 f_k(\mat{A}{*}{}{u\,^t\!A^{-1}}g,s)= \sgn^k(\det(A))\,\sgn^{nk}(u)\,|u^{-n}\det(A)|^{(2n+1)(s+\frac12)}\,j(g,I)^{-k}
\end{equation}
for $A\in\GL_{2n}(\R)$, $u\in\R^\times$ and $g\in K^{(2n)}$. Then $f=f_k$ is the section which we will put in our local archimedean integral $Z(s,f,w)$.

Thus consider $Z(s,f_k,w_\lambda)$. By Proposition \ref{interopprop} i), this integral is a vector in $\pi$. The observation \eqref{Qninveq2}, together with the transformation properties of $f_k$, imply that for $g \in K$
\begin{equation}\label{Zfkwkeq1}
 \pi(g)Z(s,f_k,w_\lambda)=j(g,I)^{-k}Z(s,f_k,w_\lambda).
\end{equation}
Since the $K$-type $\rho_k$ occurs only once in $\pi$, it follows that
\begin{equation}\label{Zfkwkeq2}
 Z(s,f_k,w_\lambda)=B_\lambda(s)w_\lambda
\end{equation}
for a constant $B_\lambda(s)$ depending on $s$ and the Harish-Chandra parameter $\lambda$. The rest of this section is devoted to calculating $B_\lambda(s)$ explicitly.
\subsubsection*{The scalar minimal $K$-type case}
We first consider the case $\lambda=(k-1)e_1+\ldots+(k-n)e_n$ with $k>n$. Then $\dot\pi_\lambda=\pi_k$, the holomorphic discrete series representation of $G_{2n}(\R)$ with minimal $K$-type $\rho_k$. Let $w_k$ be a vector spanning this minimal $K$-type. Let $\langle\,,\,\rangle$ be an appropriately normalized invariant hermitian inner product on the space of $\pi_k$ such that $\langle w_k,w_k\rangle=1$. As proved in the appendix of \cite{KnightlyLi2016}, we have the following simple formula for the corresponding matrix coefficient,
\begin{equation}\label{matrixcoeffeq}
 \langle\pi_k(h)w_k,w_k\rangle=\begin{cases}
       \displaystyle\frac{\mu_n(h)^{nk/2}\,2^{nk}}{\det{(A+D+i(C-B))^k}}&\text{for }\mu(h)>0,\\[2ex]
       0&\text{for }\mu(h)<0.
                           \end{cases}
\end{equation}
Here, $h=\mat{A}{B}{C}{D}\in G_{2n}(\R)$. We will need the following result.
\begin{lemma}\label{Tintegrallemma}
 For a complex number $z$, let
 \begin{equation}\label{Tintegrallemmaeq1}
  \gamma_n(z)=\int\limits_T\bigg(\prod_{1\leq i<j\leq n}(t_i^2-t_j^2)\bigg)\Big(\prod_{j=1}^nt_j\Big)^{-z-n}\,d\mathbf{t},
 \end{equation}
 where $d\mathbf{t}$ is the Lebesgue measure and
 \begin{equation}\label{Blambdaformulaeq4}
  T=\{(t_1,\ldots,t_n)\in\R^n\::\:t_1>\ldots>t_n>1\}.
 \end{equation}
 Then, for real part of $z$ large enough,
 \begin{equation}\label{gammanlemmaeq2}
  \gamma_n(z)=\prod_{m=1}^{n}(m-1)!\prod_{j=1}^m\frac{1}{z-m-1+2j}.
 \end{equation}
\end{lemma}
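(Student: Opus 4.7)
The plan is to reduce the integral to a Selberg integral via two changes of variables, then apply Selberg's classical formula and simplify the resulting $\Gamma$-product.

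First, I would substitute $s_j = 1/t_j$, mapping $T$ to the ordered simplex $\{0 < s_1 < \ldots < s_n < 1\}$. Using $t_i^2 - t_j^2 = (s_j^2 - s_i^2)/(s_i^2 s_j^2)$, $\prod_j t_j^{-z-n} = \prod_j s_j^{z+n}$, and $d\mathbf{t} = \prod_j s_j^{-2}\,d\mathbf{s}$, the powers of each $s_j$ collapse to exponent $z-n$, giving
\[
\gamma_n(z) = \int_{0 < s_1 < \ldots < s_n < 1}\prod_{i<j}(s_j^2 - s_i^2)\prod_j s_j^{z-n}\,d\mathbf{s}.
\]
A further substitution $v_j = s_j^2$ converts the Vandermonde in squares into a genuine Vandermonde. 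Since $\prod_j v_j^{(z-n-1)/2}$ is symmetric and the Vandermonde is non-negative on the ordered region, I can pass to the full cube at the cost of $1/n!$, obtaining
\[
\gamma_n(z) = \frac{1}{2^n n!}\int_{[0,1]^n}\prod_{i<j}|v_j - v_i|\prod_j v_j^{(z-n-1)/2}\,d\mathbf{v}.
\]

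This is a Selberg integral with parameters $\alpha = (z-n+1)/2$, $\beta = 1$, $\gamma = 1/2$. Applying Selberg's formula and reindexing $m = j+1$, the factor $\Gamma((m+1)/2)\Gamma((m+2)/2)/\Gamma(3/2)$ simplifies to $2^{1-m}m!$ via the Legendre duplication formula $\Gamma(w)\Gamma(w+\tfrac12) = 2^{1-2w}\sqrt{\pi}\,\Gamma(2w)$. Collecting powers of $2$, factorials, and the prefactor $1/(2^n n!)$ yields the intermediate form
\[
\gamma_n(z) = 2^{-n(n+1)/2}\prod_{k=1}^{n-1}k!\;\prod_{m=1}^n\frac{\Gamma((z-n+m)/2)}{\Gamma((z+m+1)/2)}.
\]

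To match the claimed closed form, I would invoke the elementary identity $\prod_{j=1}^m(a+j) = \Gamma(a+m+1)/\Gamma(a+1)$ with $a = (z-m-1)/2$, which gives
\[
\prod_{j=1}^m(z-m-1+2j) = 2^m\,\frac{\Gamma((z+m+1)/2)}{\Gamma((z-m+1)/2)}.
\]
Substituting into the right-hand side of the claim and observing that the multisets $\{\Gamma((z-n+m)/2) : 1 \le m \le n\}$ and $\{\Gamma((z-m+1)/2) : 1 \le m \le n\}$ are both equal (after reindexing) to $\{\Gamma((z-n+1)/2), \ldots, \Gamma(z/2)\}$, the two expressions coincide. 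The computation is largely bookkeeping; the substantive inputs are Selberg's integral, which I would cite, and the careful pairing of half-integer-shifted $\Gamma$-factors via duplication. The one place to be careful is to keep track of exactly which of the three Gamma quotients in Selberg's formula yields the factorials $(m-1)!$ appearing in the target, so that the powers of $2$ balance in the end.
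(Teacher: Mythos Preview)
Your proposal is correct and follows exactly the approach indicated in the paper, which simply states that after some straightforward variable transformations the integral reduces to the Selberg integral. Your two substitutions $s_j=1/t_j$ and $v_j=s_j^2$, the symmetrization to the full cube, the identification of the Selberg parameters $(\alpha,\beta,\gamma)=((z-n+1)/2,1,1/2)$, and the final bookkeeping with the duplication formula and the reindexing of the $\Gamma$-arguments all check out; you have supplied the details the paper omits.
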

\begin{proof}
After some straightforward variable transformations, our integral reduces to the Selberg integral; see \cite{Selberg1944} or \cite{ForresterWarnaar2008}.
\end{proof}

\begin{proposition}\label{scalarminKtypeprop}
 Assume that $k>n$ and $\lambda=(k-1)e_1+\ldots+(k-n)e_n$, so that $\dot\pi_\lambda=\pi_k$. Then
 \begin{equation}\label{scalarminKtypepropeq1}
  B_\lambda(s)=\frac{\pi^{n(n+1)/2}}{\prod_{m=1}^n(m-1)!}\,i^{nk}\,2^{-n(2n+1)s+3n/2}\,\gamma_n\Big((2n+1)s-\frac12+k\Big),
 \end{equation}
 where  $\gamma_n$ is the rational function \eqref{gammanlemmaeq2}.
\end{proposition}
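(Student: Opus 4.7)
The strategy is to reduce the vector-valued identity $Z(s,f_k,w_k)=B_\lambda(s)w_k$ to a scalar integral of a matrix coefficient, then to use the $KAK$ decomposition of $\Sp_{2n}(\R)$ to further reduce it to an integral over $\mathfrak{a}^+$, and finally to match the resulting integrand against the Selberg-type integral of Lemma \ref{Tintegrallemma}.  To begin, one pairs both sides with $w_k$ under the normalized invariant hermitian inner product (so $\langle w_k,w_k\rangle=1$) to obtain
$$B_\lambda(s)=\int_{\Sp_{2n}(\R)}f_k(Q_n\cdot(h,1),s)\,\langle\pi_k(h)w_k,w_k\rangle\,dh,$$
which converges absolutely for $\Re(s)$ large by Proposition \ref{interopprop}(i), and into which one substitutes the explicit matrix coefficient formula \eqref{matrixcoeffeq}.

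Next, apply the $KAK$ integration formula \eqref{KAKintegrationeq}.  Under $h\mapsto k_1hk_2$ with $k_1,k_2\in K$, the transformation rule \eqref{Qninveq2} and the embedding \eqref{embedding-defn} show that $f_k(Q_n(k_1hk_2,1),s)$ picks up the factor $j((k_2,k_1^{-1}),iI_{2n})^{-k}$, which a direct block-matrix computation identifies with $\det(u_1)^{-k}\det(u_2)^{-k}$ under the identification $K\cong U(n)$, $k_j\leftrightarrow u_j$.  On the other hand, since the $K$-type of $w_k$ is one-dimensional with character $k'\mapsto\det(u')^k$, the matrix coefficient transforms with the reciprocal character $\det(u_1)^k\det(u_2)^k$, so the two contributions cancel.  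The inner $K$-integrations then contribute only $\mathrm{vol}(K)^2=1$, leaving an integral over $\mathfrak{a}^+$ weighted by $\prod_{\mu\in\Sigma^+}\sinh\mu(H)$.  Evaluating each factor at $\exp H=\mathrm{diag}(a_1,\ldots,a_n,a_1^{-1},\ldots,a_n^{-1})$: the matrix coefficient is $2^{nk}\prod_ja_j^k(1+a_j^2)^{-k}$; the holomorphic-type formula $f_k(g,s)=(\det Y)^{[(2n+1)(s+1/2)-k]/2}\,j(g,iI_{2n})^{-k}$ together with a direct computation of $Q_n(\exp H,1)\cdot iI_{2n}$ gives $f_k=i^{nk}\prod_ja_j^{(2n+1)(s+1/2)}(1+a_j^2)^{-(2n+1)(s+1/2)}$; and the sinh product expands to $2^{-n^2}\prod_ja_j^{-2n}\prod_{i<j}(a_j^2-a_i^2)\prod_{i\leq j}(1-a_i^2a_j^2)$.

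The final step, and the main obstacle, is the algebraic reduction of the resulting integral to the Selberg form of Lemma \ref{Tintegrallemma}.  After substituting $t_j=a_j^{-1}$ (so that $\mathfrak{a}^+$ maps to the chamber $T$ of \eqref{Blambdaformulaeq4}), the integrand becomes a product of $\prod_{i<j}(t_i^2-t_j^2)$, $\prod_{i\leq j}(t_i^2t_j^2-1)$, $\prod_j(t_j^2+1)^{-(2n+1)(s+1/2)-k}$, and a monomial in the $t_j$.  A Cayley-type substitution $w_j=(t_j^2-1)/(t_j^2+1)\in(0,1)$ converts all the $t_j^2\pm 1$ and $t_i^2t_j^2\pm 1$ factors into polynomials in the $w_j$; the essential calculation is to verify that, after this substitution, the non-Vandermonde parts cancel to leave precisely the Selberg-type integrand $\prod_{i<j}(t_i^2-t_j^2)\prod_jt_j^{-z-n}$ with $z=(2n+1)s-1/2+k$, up to a constant involving only powers of $2$.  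This can be checked directly when $n=1$, where the substitutions $u=t^2$ and $w=(u-1)/(u+1)$ reduce the integral to a standard Euler beta integral.  Invoking Lemma \ref{Tintegrallemma} and the evaluation of $\alpha_n$ from Appendix \ref{KAKmeasureapp} then delivers \eqref{scalarminKtypepropeq1}.
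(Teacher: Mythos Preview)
Your proposal follows the paper's approach through the first two stages (pairing with $w_k$ to get a scalar integral, then applying the $KAK$ formula), but the final reduction step contains a genuine gap. After your substitution $t_j=a_j^{-1}$ you correctly arrive at an integrand involving the extra factor $\prod_{i\leq j}(t_i^2t_j^2-1)$ and the weight $\prod_j(t_j^2+1)^{-(2n+1)(s+1/2)-k}$. You then assert that a Cayley substitution $w_j=(t_j^2-1)/(t_j^2+1)$ makes ``the non-Vandermonde parts cancel to leave precisely the Selberg-type integrand $\prod_{i<j}(t_i^2-t_j^2)\prod_j t_j^{-z-n}$.'' This is not what happens: under that substitution one has $t_i^2-t_j^2=2(w_i-w_j)/[(1-w_i)(1-w_j)]$ and $t_i^2t_j^2-1=2(w_i+w_j)/[(1-w_i)(1-w_j)]$, so the integrand in the $w$-variables carries both $\prod_{i<j}(w_i-w_j)$ and $\prod_{i\leq j}(w_i+w_j)$ together with powers of $1\pm w_j$; this is \emph{not} the integrand of Lemma~\ref{Tintegrallemma}, and no cancellation back to $\prod t_j^{-z-n}$ occurs. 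Your check for $n=1$ works only because the troublesome cross-factor $\prod_{i<j}(t_i^2t_j^2-1)$ is empty there.

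The paper avoids this difficulty by a different (and simpler) substitution. Working in Lie-algebra coordinates $H=\mathrm{diag}(a_1,\ldots,a_n,-a_1,\ldots,-a_n)$ and using the positive system $\{e_i\pm e_j\}$, one sets $t_j=\cosh(a_j)$. The point is the hyperbolic identity
\[
\sinh(a_i-a_j)\,\sinh(a_i+a_j)=\cosh^2 a_i-\cosh^2 a_j=t_i^2-t_j^2,
\]
which collapses the pair of root factors for $i<j$ directly into the Vandermonde, while $\sinh(2a_j)=2\sinh(a_j)\cosh(a_j)$ supplies the Jacobian $dt_j=\sinh(a_j)\,da_j$ and a single power of $t_j$. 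Since both $f_k$ and the matrix coefficient are already powers of $e^{a_j}+e^{-a_j}=2\cosh(a_j)$, the entire integrand is a monomial in the $t_j$ times $\prod_{i<j}(t_i^2-t_j^2)$, exactly matching Lemma~\ref{Tintegrallemma}. Replacing your $t_j=a_j^{-1}$ step by $t_j=\cosh(\log a_j)=\tfrac12(a_j+a_j^{-1})$ (equivalently working in Lie-algebra coordinates from the start) closes the gap.
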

\begin{proof}
Taking the inner product with $w_k$ on both sides of \eqref{Zfkwkeq2}, we obtain
\begin{equation}\label{Blambdaformulaeq1}
 B_\lambda(s)=\int\limits_{\Sp_{2n}(\R)} f_k(Q_n \cdot (h, 1), s)\langle\pi_k(h)w_k,w_k\rangle\,dh.
\end{equation}
Since the integrand is left and right $K$-invariant, we may apply the integration formula \eqref{KAKintegrationeq}. Thus
\begin{equation}\label{Blambdaformulaeq2}
 B_\lambda(s)=\alpha_n\int\limits_{\mathfrak{a}^+}\bigg(\prod_{\lambda\in\Sigma^+}\sinh(\lambda(H))\bigg)f_k(Q_n \cdot (\exp(H), 1), s)\langle\pi_k(\exp(H))w_k,w_k\rangle\,dH.
\end{equation}
This time we work with the positive system $\Sigma^+=\{e_i-e_j:1\leq i<j\leq n\}\,\cup\,\{e_i+e_j:1\leq i\leq j\leq n\}$, so that
\begin{equation}\label{posweylchambereq2}
 \mathfrak{a}^+=\{{\rm diag}(a_1,\ldots,a_n,-a_1,\ldots,-a_n):a_1>\ldots>a_n>0\}.
\end{equation}
The function $f_k$ in \eqref{Blambdaformulaeq2} can be evaluated as follows,
\begin{align*}
 f_k(Q_n \cdot (\exp(H), 1), s)&=f_k(\left[\begin{smallmatrix}I_n\\&I_n\\&I_n&I_n\\I_n&&&I_n\end{smallmatrix}\right]\left[\begin{smallmatrix}I_n\\&&&I_n\\&&I_n\\&-I_n\end{smallmatrix}\right] \cdot (\exp(H), 1), s)\\
 &=i^{nk}f_k(\left[\begin{smallmatrix}I_n\\&I_n\\&I_n&I_n\\I_n&&&I_n\end{smallmatrix}\right]\left[\begin{smallmatrix}A\\&I_n\\&&A^{-1}\\&&&I_n\end{smallmatrix}\right], s)\qquad( A=\left[\begin{smallmatrix}e^{a_1}\\&\ddots\\&&e^{a_n}\end{smallmatrix}\right])\\
  &=i^{nk}e^{(a_1+\ldots+a_n)(2n+1)(s+\frac12)}f_k(\left[\begin{smallmatrix}I_n\\&I_n\\&A&I_n\\A&&&I_n\end{smallmatrix}\right], s).
\end{align*}
Now use
\begin{equation}\label{SL2NAKeq4b}
 \mat{1}{}{e^{a_j}}{1}=\mat{1}{x}{}{1}\mat{y^{1/2}}{}{}{y^{-1/2}}r(\theta),\qquad r(\theta)=\mat{\cos(\theta)}{\sin(\theta)}{-\sin(\theta)}{\cos(\theta)},
\end{equation}
with
\begin{equation}\label{SL2NAKeq5b}
 x=\frac{e^{a_j}}{1+e^{2a_j}},\qquad y=\frac1{1+e^{2a_j}},\qquad e^{i\theta}=\frac{1-ie^{a_j}}{(1+e^{2a_j})^{1/2}}.
\end{equation}
It follows that
\begin{equation}\label{fkevaleq}
 f_k(Q_n \cdot (\exp(H), 1), s)=i^{nk}\prod_{j=1}^n(e^{a_j}+e^{-a_j})^{-(2n+1)(s+\frac12)}.
\end{equation}
Substituting this and \eqref{matrixcoeffeq} into \eqref{Blambdaformulaeq2}, we get after some simplification

\begin{align*}
 \alpha_n^{-1}B_\lambda(s)
 & =i^{nk}\,2^{-n(2n+1)(s+\frac12)+n}\\ & \times \int\limits_{\mathfrak{a}^+}\prod_{1\leq i<j\leq n}(\cosh(a_i)^2-\cosh(a_j)^2)\prod_{j=1}^n
  \cosh(a_j)^{-(2n+1)s+\frac12-n-k}
  \prod_{j=1}^n\sinh(a_j)\,dH.
\end{align*}
Now introduce the new variables $t_j=\cosh(a_j)$. The domain $\mathfrak{a}^+$ turns into the domain $T$ defined in \eqref{Blambdaformulaeq4}. We get
\begin{align}\label{Blambdaformulaeq5}
 B_\lambda(s)&=\alpha_ni^{nk}\,2^{-n(2n+1)(s+\frac12)+n}\int\limits_T\bigg(\prod_{1\leq i<j\leq n}(t_i^2-t_j^2)\bigg)\Big(\prod_{j=1}^nt_j\Big)^{-(2n+1)s+\frac12-n-k}\,d\mathbf{t}.
\end{align}
Thus our assertion follows from Lemma \ref{Tintegrallemma} and the value of $\alpha_n$ given in \eqref{alphanpropeq1}.
\end{proof}
\subsubsection*{Calculation of $B_\lambda(s)$ for general Harish-Chandra parameter}
Now consider a general $\lambda=\ell_1e_1+\ldots+\ell_ne_n$ for which the condition \eqref{lambdaconditioneq} is satisfied. By \eqref{holdiscserembeddinglemmaeq2}, we may embed $\dot\pi_\lambda$ into
\begin{equation}\label{holdiscserembeddinglemmaeq2b}
    \sgn^k|\cdot|^{\ell_n}\times\ldots\times\sgn^k|\cdot|^{\ell_1}\rtimes|\cdot|^{-\frac12(\ell_1+\ldots+\ell_n)}.
\end{equation}
In this induced model, the weight $k$ vector $w_\lambda$ in $\dot\pi_\lambda$ has the formula
\begin{align}\label{weight-k-vector}
 w_\lambda(\left[\begin{smallmatrix}
  a_1&\cdots&*&*&\cdots&*\\&\ddots&\vdots&\vdots&&\vdots\\&&a_n&*&\cdots&*\\&&&a_0 a_1^{-1}&&\\&&&\vdots&\ddots&\\&&&*&\cdots&a_0a_n^{-1}
 \end{smallmatrix}\right]g)&=\sgn(a_1\cdot\ldots\cdot a_n)^k\,|a_0|^{-\frac12(\ell_1+\ldots+\ell_n)-\frac{n(n+1)}4}\nonumber\\
 &\hspace{10ex}\bigg(\prod_{j=1}^n|a_j|^{\ell_{n+1-j}+n+1-j}\bigg)j(g,I)^{-k}
\end{align}
for $a_1,\ldots,a_n\in \R^\times$ and $g \in K^{(n)}$. Evaluating \eqref{Zfkwkeq2} at $1$, we get
\begin{equation}\label{Blambdaintegraleq}
 B_\lambda(s)=\int\limits_{\Sp_{2n}(\R)}f_k(Q_n\cdot(h,1),s)w_\lambda(h)\,dh.
\end{equation}
Recall the beta function
\begin{equation}\label{betafcteq1}
 B(x,y)=\frac{\Gamma(x)\Gamma(y)}{\Gamma(x+y)}.
\end{equation}
One possible integral representation for the beta function is
\begin{equation}\label{betafcteq2}
 B(x,y)=\int\limits_0^\infty\frac{a^{x-1}}{(a+1)^{x+y}}\,da=2\int\limits_0^\infty\frac{a^{x-y}}{(a+a^{-1})^{x+y}}\,d^\times a\qquad\text{for }{\rm Re}(x),{\rm Re}(y)>0.
\end{equation}
For $s\in\C$ and $m\in\Z$, let
\begin{equation}\label{betamsdef}
 \beta(m,s)=B\Big(\Big(n+\frac12\Big)s+\frac14+\frac{m}2,\Big(n+\frac12\Big)s+\frac14-\frac{m}2\Big).
\end{equation}

\begin{lemma}\label{Blambdanlemma1}
 We have  \begin{equation}\label{Blambdanlemma1eq1}
  B_\lambda(s)=i^{nk}\bigg(\prod_{j=1}^n\beta(\ell_j,s)\bigg)C_k(s),
 \end{equation}
  where
 \begin{equation}\label{Blambdanlemma1eq2}
  C_k(s)=\int\limits_{\tilde N_1}\int\limits_{\tilde N_2}f_k(\left[\begin{smallmatrix}I_n\\&I_n\\&^tV&I_n\\V&X&&I_n\end{smallmatrix}\right],s)\,dX\,dV
 \end{equation}
 depends only on $k=\ell_1+1$. Here, $\tilde N_1$ is the space of upper triangular nilpotent matrices of size $n\times n$, and $\tilde N_2$ is the space of symmetric $n\times n$ matrices.
\end{lemma}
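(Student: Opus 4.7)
The plan is to evaluate $B_\lambda(s)$ from \eqref{Blambdaintegraleq} by combining the Iwasawa decomposition of $\Sp_{2n}(\R)$ with an explicit Bruhat-type decomposition of $Q_n(h,1)$, factoring the integrand into a piece depending only on the torus coordinates $a_1,\ldots,a_n$ and a piece depending only on the unipotent coordinates $(V,X)$ equal to the integrand of $C_k(s)$.

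First I would parameterize $\Sp_{2n}(\R)=ANK^{(n)}$, where $A$ is the connected diagonal torus ($a_j>0$), $N=N_1'N_2'$ is the Borel unipotent (parameterized by $V\in\tilde N_1$ and $X\in\tilde N_2$), and $K^{(n)}$ is the maximal compact, so that $dh=\delta_B(a)^{-1}\,da\,dV\,dX\,dk$ with $\delta_B(a)=\prod_j a_j^{2(n+1-j)}$. The $K$-integration contributes a factor of $1$: \eqref{weight-k-vector} gives $w_\lambda(hk)=w_\lambda(h)\,j(k,iI_n)^{-k}$; \eqref{Qninveq2} together with the $K^{(2n)}$-equivariance from \eqref{degprincserhollemmaeq3c} gives $f_k(Q_n(hk,1),s)=f_k(Q_n(h,1),s)\,j((k,1)_{G_{4n}},iI_{2n})^{-k}$; and a direct matrix computation using \eqref{embedding-defn} shows $j((k,1)_{G_{4n}},iI_{2n})=j(k,iI_n)^{-1}$, so the two factors cancel.

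Next I would decompose $Q_n(n,a^{-1})$. The identity $(an,1)=(a,a)(n,a^{-1})$ together with Lemma \ref{lemmaqrprop}(iii) gives $f_k(Q_n(an,1),s)=f_k(Q_n(n,a^{-1}),s)$. Using $Q_n=\alpha_n\cdot(I_{2n},J_n)$ from \eqref{alphardefeq} and the surrounding discussion, one computes $\alpha_n=\mat{I_{2n}}{0}{W_0}{I_{2n}}$ with $W_0=\left[\begin{smallmatrix}0 & I_n\\I_n & 0\end{smallmatrix}\right]$; the conjugation identity $(I_{2n},J_n)(n,a^{-1})(I_{2n},J_n)^{-1}=(n,a)$ then yields $Q_n(n,a^{-1})=\alpha_n\cdot(n,a)\cdot(I_{2n},J_n)$. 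Since $(n,a)\in P_{4n}$ with Levi component $m$ satisfying $d(m)=\det(a_U)$, and $\alpha_n m=m\cdot\bar n_{W''}$ for an explicit opposite-Siegel-unipotent element $\bar n_{W''}$ whose parameter depends on $a$ and $V$, I would pull $m$ out of $f_k$ to extract the factor $\prod_j a_j^{(2n+1)(s+1/2)}$ from the $\delta_{P_{4n}}^{s+1/2}$-contribution; the residual integrand is $f_k(\bar n_{W''}\,n_P\,(I_{2n},J_n),s)$, where $n_P\in N_{4n,0}$ corresponds to $X$.

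Finally I would change variables in $(V,X)$ -- specifically conjugating $V$ by $a_U$ and rescaling $X$ -- to absorb the remaining $a$-dependence from $W''$ into the Jacobians, transforming $\bar n_{W''}\,n_P\,(I_{2n},J_n)$ (modulo a further left $P_{4n}$-factor) into the matrix $\left[\begin{smallmatrix}I & 0 & 0 & 0\\0 & I & 0 & 0\\0 & {}^tV & I & 0\\V & X & 0 & I\end{smallmatrix}\right]$ from \eqref{Blambdanlemma1eq2}. The $(V,X)$-integral then evaluates to $C_k(s)$. Collecting all $a$-dependent factors from $w_\lambda(an)=\chi_A(a)$, $\delta_B(a)^{-1}$, $\prod_j a_j^{(2n+1)(s+1/2)}$, and the Jacobians, the $A$-integration separates as $\prod_j\int_0^\infty a_j^{\ell_j}(a_j+a_j^{-1})^{-(2n+1)(s+1/2)}\,d^\times a_j$, each factor equalling $\tfrac{1}{2}\beta(\ell_j,s)$ via \eqref{betafcteq2}; the constant $i^{nk}$ arises from $j((I_{2n},J_n),iI_{2n})^{-k}$ at the base point. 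The main obstacle will be carrying out the change of variables in this last step -- verifying that after the substitution the residual integrand is precisely $f_k$ evaluated at the matrix in \eqref{Blambdanlemma1eq2}, and tracking all Jacobian factors, sign characters, and phase contributions with the correct normalization.
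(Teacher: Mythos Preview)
Your overall architecture — Iwasawa decomposition, right $K$-invariance of the integrand, extraction of a $P_{4n}$-factor contributing powers of $a_j$, then a unipotent change of variables leaving the integral $C_k(s)$ — matches the paper's. Two points deserve comment.

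First, a technical slip: the Iwasawa measure in the paper's normalization (Proposition \ref{Iwasawameasureprop}) is $\int_{\Sp_{2n}(\R)}\varphi(h)\,dh=2^n\int_A\int_N\varphi(an)\,dn\,da$ for right $K$-invariant $\varphi$, with \emph{no} modular factor $\delta_B(a)^{-1}$. Your stated formula would throw off the $a_j$-powers; the required cancellation in the final $A$-integral (leaving exactly $\prod_j a_j^{\ell_{n+1-j}}$) comes from the Jacobian of the unipotent change of variables, not from a modular character.

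Second, and more substantively, your tactical choice $(an,1)=(a,a)(n,a^{-1})$ makes the last step harder than it needs to be. After pulling out the Levi $m$ of $(n,a)$ you are left with $\bar n_{W''}\,n_P\,(I_{2n},J_n)$ where $W''=\,^t\!M_1W_0M_1$ has off-diagonal blocks $DU$ and $\,^t\!UD$ depending on \emph{both} $D$ and $V$; separating the $a$-dependence from this by a change in $(V,X)$ alone is awkward. The paper takes the opposite order: keep $(an,1)$ together, pull the diagonal $D$ out first to obtain the simple matrix $\bar n_D=\left[\begin{smallmatrix}I\\&I\\&D&I\\D&&&I\end{smallmatrix}\right]$, and then perform an ${\rm SL}_2$-Iwasawa decomposition in each coordinate $a_j$ (equations \eqref{Blambdanlemma1eq7}--\eqref{Blambdanlemma1eq10}), writing $\bar n_D=p'\cdot\kappa$ with $\kappa\in K^{(2n)}$ a diagonal rotation matrix. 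The resulting $K$-element $\kappa$ is then commuted past the $U$-block via the explicit matrix identity \eqref{Blambdanlemma1eq12}, producing a new lower-unipotent variable $Z=-(1+SVS)^{-1}SVC$; the Jacobian of $V\mapsto Z$ is computed directly as $\prod_j\sin(\theta_j)^{j-n}\cos(\theta_j)^{1-j}$. After this the $a$- and $(V,X)$-integrations genuinely separate, and one reads off \eqref{Blambdanlemma1eq17} and then \eqref{Blambdanlemma1eq22}. Your route is not wrong in principle, but the ``main obstacle'' you flag is precisely the place where the paper's order of operations (diagonal first, unipotent second, via ${\rm SL}_2$-Iwasawa plus the commutation identity) pays off.
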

\begin{proof}
In this proof we write $f_k(h)$ instead of $f_k(h,s)$ for simplicity. We will use the Iwasawa decomposition to calculate the integral \eqref{Blambdaintegraleq}. By Proposition \ref{Iwasawameasureprop}, the relevant integration formula for right $K^{(n)}$-invariant functions $\varphi$ is
\begin{equation}\label{Sp2niwasawainteq2}
 \int\limits_{\Sp_{2n}(\R)}\varphi(h)\,dh=2^n\int\limits_A\int\limits_N\varphi(an)\,dn\,da,
\end{equation}
where $N$ is the unipotent radical of the Borel subgroup, $dn$ is the Lebesgue measure, $A=\{{\rm diag}(a_1,\ldots,a_n,a_1^{-1},\ldots,a_n^{-1})\,:\,a_1,\ldots,a_n>0\}$, and $da=\frac{da_1}{a_1}\ldots\frac{da_n}{a_n}$. Thus, using \eqref{weight-k-vector},
\begin{align}\label{Blambdanlemma1eq4}
 B_\lambda(s)&=2^n\int\limits_A\int\limits_Nf_k(Q_n\cdot(an,1))w_\lambda(an)\,dn\,da\nonumber\\
 &=2^n\int\limits_A\int\limits_N\bigg(\prod_{j=1}^na_j^{\ell_{n+1-j}+n+1-j}\bigg)f_k(Q_n\cdot(an,1))\,dn\,da.
\end{align}
We have $N=N_1N_2$, where
\begin{equation}\label{Blambdanlemma1eq5}
 N_1=\{\mat{U}{}{}{^tU^{-1}}:U\text{ upper triangular unipotent}\},\qquad N_2=\{\mat{I_n}{X}{}{I_n}:X\text{ symmetric}\}.
\end{equation}
Write further $a=\mat{D}{}{}{D^{-1}}$ with $D={\rm diag}(a_1,\ldots,a_n)$. Then
\begin{align}\label{Blambdanlemma1eq6}
 f_k(Q_n\cdot(an,1))&=f_k(\left[\begin{smallmatrix}I_n\\&I_n\\&I_n&I_n\\I_n&&&I_n\end{smallmatrix}\right]\left[\begin{smallmatrix}I_n\\&&&I_n\\&&I_n\\&-I_n\end{smallmatrix}\right]\left[\begin{smallmatrix}D\\&I_n\\&&D^{-1}\\&&&I_n\end{smallmatrix}\right]\left[\begin{smallmatrix}U\\&I_n\\&&^tU^{-1}\\&&&I_n\end{smallmatrix}\right]\left[\begin{smallmatrix}I_n&&-X\\&I_n\\&&I_n\\&&&I_n\end{smallmatrix}\right])\nonumber\\
 &=i^{nk}|a_1\cdot\ldots\cdot a_n|^{(2n+1)(s+\frac12)}f_k(\left[\begin{smallmatrix}I_n\\&I_n\\&D&I_n\\D&&&I_n\end{smallmatrix}\right]\left[\begin{smallmatrix}U\\&I_n\\&&^tU^{-1}\\&&&I_n\end{smallmatrix}\right]\left[\begin{smallmatrix}I_n&&-X\\&I_n\\&&I_n\\&&&I_n\end{smallmatrix}\right]).
\end{align}
We will use the identity
\begin{equation}\label{Blambdanlemma1eq7}
 \mat{1}{}{a_j}{1}=\mat{1}{x_j}{}{1}\mat{y_j^{1/2}}{}{}{y_j^{-1/2}}r(\theta_i)
\end{equation}
with
\begin{equation}\label{Blambdanlemma1eq8}
 x_j=\frac{a_j}{1+a_j^2},\qquad y_j=\frac1{1+a_j^2},\qquad e^{i\theta_j}=\frac{1-ia_j}{(1+a_j^2)^{1/2}}
\end{equation}
Let
\begin{equation}\label{Blambdanlemma1eq9}
 Y=\left[\begin{smallmatrix}y_1^{1/2}\\&\ddots\\&&y_n^{1/2}\end{smallmatrix}\right],\qquad
 C=\left[\begin{smallmatrix}\cos(\theta_1)\\&\ddots\\&&\cos(\theta_n)\end{smallmatrix}\right],\qquad
 S=\left[\begin{smallmatrix}\sin(\theta_1)\\&\ddots\\&&\sin(\theta_n)\end{smallmatrix}\right].
\end{equation}
Then
\begin{equation}\label{Blambdanlemma1eq10}
 \left[\begin{smallmatrix}I_n\\&I_n\\&D&I_n\\D&&&I_n\end{smallmatrix}\right]=\left[\begin{smallmatrix}Y&&&*\\&Y&*\\&&Y^{-1}\\&&&Y^{-1}\end{smallmatrix}\right]\left[\begin{smallmatrix}C&&&S\\&C&S\\&-S&C\\-S&&&C\end{smallmatrix}\right],
\end{equation}
and thus
\begin{align}\label{Blambdanlemma1eq11}
 f_k(Q_n\cdot(an,1))&=i^{nk}\bigg(\prod_{j=1}^n\frac1{a_j+a_j^{-1}}\bigg)^{(2n+1)(s+\frac12)}\!f_k(\left[\begin{smallmatrix}C&&&S\\&C&S\\&-S&C\\-S&&&C\end{smallmatrix}\right]\!\!\left[\begin{smallmatrix}U\\&I_n\\&&^tU^{-1}\\&&&I_n\end{smallmatrix}\right]\!\!\left[\begin{smallmatrix}I_n&&-X\\&I_n\\&&I_n\\&&&I_n\end{smallmatrix}\right]).
\end{align}
Write $U=I_n+V$, so that $V$ is upper triangular nilpotent. A calculation confirms that
\begin{equation}\label{Blambdanlemma1eq12}
 \left[\begin{smallmatrix}C&&&S\\&C&S\\&-S&C\\-S&&&C\end{smallmatrix}\right]\left[\begin{smallmatrix}U\\&I_n\\&&^tU^{-1}\\&&&I_n\end{smallmatrix}\right]=p\left[\begin{smallmatrix}I_n\\&I_n\\&^tZ&I_n\\Z&&&I_n\end{smallmatrix}\right]\left[\begin{smallmatrix}C&&&S\\&C&S\\&-S&C\\-S&&&C\end{smallmatrix}\right],
\end{equation}
where $Z=-(1+SVS)^{-1}SVC$ and $p=\mat{B}{*}{}{^tB^{-1}}$ with $\det(B)=1$. Hence
\begin{align}\label{Blambdanlemma1eq13}
 f_k(Q_n\cdot(an,1))&=i^{nk}\bigg(\prod_{j=1}^n\frac1{a_j+a_j^{-1}}\bigg)^{(2n+1)(s+\frac12)}\!f_k(\left[\begin{smallmatrix}I_n\\&I_n\\&^tZ&I_n\\Z&&&I_n\end{smallmatrix}\right]\!\left[\begin{smallmatrix}C&&&S\\&C&S\\&-S&C\\-S&&&C\end{smallmatrix}\right]\!\left[\begin{smallmatrix}I_n&&-X\\&I_n\\&&I_n\\&&&I_n\end{smallmatrix}\right]).
\end{align}
Let $\tilde N_1$ be the Euclidean space of upper triangular nilpotent real matrices of size $n\times n$. Then it is an exercise to verify that
\begin{equation}\label{Blambdanlemma1eq14}
 \varphi\longmapsto\int\limits_{\tilde N_1}\varphi(I_n+V)\,dV,
\end{equation}
where $dV$ is the Lebesgue measure, defines a Haar measure on the group of upper triangular unipotent real matrices. (Use the fact that $V\mapsto UV$ defines an automorphism of $\tilde N$ of determinant $1$, for every upper triangular unipotent $U$.)

Therefore, as we integrate \eqref{Blambdanlemma1eq13} over $N_1$, we may treat $V$ as a Euclidean variable. We then have to consider the Jacobian of the change of variables $V\mapsto Z$. It is not difficult to show that this Jacobian is $\prod_{j=1}^n\sin(\theta_j)^{j-n}\cos(\theta_j)^{1-j}$. Substituting from \eqref{Blambdanlemma1eq8}, we find
\begin{equation}\label{Blambdanlemma1eq16}
 \bigg|\prod_{j=1}^n\sin(\theta_j)^{j-n}\cos(\theta_j)^{1-j}\bigg|=\bigg(\prod_{j=1}^na_j\bigg)^{-\frac{n+1}2}\bigg(\prod_{j=1}^na_j^j\bigg)\bigg(\prod_{j=1}^n\frac1{a_j+a_j^{-1}}\bigg)^{\frac{1-n}2}.
\end{equation}
Using the above and some more matrix identities, we get
\begin{align}\label{Blambdanlemma1eq17}
 \int\limits_{N_1}\int\limits_{N_2}f_k(Q_n\cdot(an_1n_2,1))\,dn_1\,dn_2&=i^{nk}\bigg(\prod_{j=1}^n\frac1{a_j+a_j^{-1}}\bigg)^{(2n+1)s+\frac12}\bigg(\prod_{j=1}^na_j\bigg)^{-n-1}\bigg(\prod_{j=1}^na_j^j\bigg)\nonumber\\
 &\hspace{2ex}\int\limits_{\tilde N_1}\int\limits_{\tilde N_2}f_k(\left[\begin{smallmatrix}I_n\\&I_n\\&^tV&I_n\\V&&&I_n\end{smallmatrix}\right]\left[\begin{smallmatrix}I_n\\&I_n\\&&I_n\\&X&&I_n\end{smallmatrix}\right])\,dX\,dV.
\end{align}
This last integral is the $C_k(s)$ defined in \eqref{Blambdanlemma1eq2}. Going back to \eqref{Blambdanlemma1eq4} and using \eqref{betafcteq2}, we  have
\begin{align}\label{Blambdanlemma1eq22}
 B_\lambda(s)&=2^n\int\limits_A\int\limits_{N_1}\int\limits_{N_2}\bigg(\prod_{j=1}^na_j^{\ell_{n+1-j}+n+1-j}\bigg)f_k(Q_n\cdot(an_1n_2,1))\,dn_2\,dn_1\,da\nonumber\\
 &=2^ni^{nk}\int\limits_A\bigg(\prod_{j=1}^na_j^{\ell_{n+1-j}}\bigg)\bigg(\prod_{j=1}^n\frac1{a_j+a_j^{-1}}\bigg)^{(2n+1)s+\frac12}\,da\cdot C_k(s)\nonumber\\
   &=i^{nk}\prod_{j=1}^n\bigg(B\Big(\Big(n+\frac12\Big)s+\frac14+\frac{\ell_j}2,\Big(n+\frac12\Big)s+\frac14-\frac{\ell_j}2\Big)\bigg)\cdot C_k(s).
\end{align}
This concludes the proof.
\end{proof}

\begin{lemma}\label{Blambdanlemma2}
 The function $C_k(s)$ defined in \eqref{Blambdanlemma1eq2} is given by
 \begin{equation}\label{Blambdanlemma2eq1}
  C_k(s)=\frac{\pi^{n(n+1)/2}}{\prod_{m=1}^{n}(m-1)!}2^{-n(2n+1)s+3n/2}\,\frac{\gamma_n((2n+1)s-\frac12+k)}{\prod_{j=1}^n\beta(k-j,s)},
 \end{equation}
  where $\gamma_n$ is the rational function from Lemma \ref{Tintegrallemma}, and $\beta(m,s)$ is defined in \eqref{betamsdef}.
\end{lemma}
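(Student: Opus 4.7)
The plan is to exploit the key structural fact already established in Lemma \ref{Blambdanlemma1}: the quantity $C_k(s)$ depends only on $k$ and not on the full Harish-Chandra parameter $\lambda$. This means that $C_k(s)$ can be isolated by computing the ratio $B_\lambda(s) / \bigl(i^{nk}\prod_j \beta(\ell_j,s)\bigr)$ for any single convenient choice of $\lambda$ compatible with a fixed $k=\ell_1+1$.

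The natural choice is the scalar minimal $K$-type case, namely $\lambda = (k-1)e_1 + \ldots + (k-n)e_n$, which corresponds to $\dot\pi_\lambda = \pi_k$ and for which $B_\lambda(s)$ has already been computed explicitly in Proposition \ref{scalarminKtypeprop}. With this choice, $\ell_j = k-j$, so Lemma \ref{Blambdanlemma1} specializes to
\begin{equation*}
  B_\lambda(s) = i^{nk}\bigg(\prod_{j=1}^n \beta(k-j,s)\bigg) C_k(s).
\end{equation*}
Solving for $C_k(s)$ and substituting the closed-form expression from Proposition \ref{scalarminKtypeprop},
\begin{equation*}
  B_\lambda(s) = \frac{\pi^{n(n+1)/2}}{\prod_{m=1}^n(m-1)!}\, i^{nk}\, 2^{-n(2n+1)s + 3n/2}\, \gamma_n\!\left((2n+1)s - \tfrac12 + k\right),
\end{equation*}
the factors of $i^{nk}$ cancel and one reads off exactly the claimed identity
\begin{equation*}
  C_k(s) = \frac{\pi^{n(n+1)/2}}{\prod_{m=1}^n(m-1)!}\, 2^{-n(2n+1)s + 3n/2}\, \frac{\gamma_n\!\left((2n+1)s - \tfrac12 + k\right)}{\prod_{j=1}^n \beta(k-j,s)}.
\end{equation*}

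I expect this proof to be essentially a one-line argument once the previous results are in hand; there is no real obstacle, since all of the analytic work has already been carried out in the proofs of Proposition \ref{scalarminKtypeprop} (the $KAK$-integration and the Selberg integral evaluation) and Lemma \ref{Blambdanlemma1} (the Iwasawa-decomposition calculation that factored $B_\lambda(s)$ into the product of beta factors and $C_k(s)$). The only thing worth double-checking is that the identity $\ell_j = k-j$ holds under the normalization conventions used in the statement of Lemma \ref{Blambdanlemma1}, so that the $\prod_{j=1}^n \beta(\ell_j,s)$ factor matches $\prod_{j=1}^n \beta(k-j,s)$ on the nose with no reindexing shift. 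Once that bookkeeping is verified, the formula follows immediately by division, and the result is valid for all $s$ in the domain of convergence (with meromorphic continuation inherited from the explicit rational/beta-function expressions on the right).
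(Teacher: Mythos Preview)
Your proof is correct and follows exactly the same approach as the paper: specialize Lemma~\ref{Blambdanlemma1} to the scalar minimal $K$-type case $\lambda=(k-1)e_1+\ldots+(k-n)e_n$ (so that $\ell_j=k-j$), invoke the closed form for $B_\lambda(s)$ from Proposition~\ref{scalarminKtypeprop}, and solve for $C_k(s)$.
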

\begin{proof}
Consider $\lambda=(k-1)e_1+\ldots+(k-n)e_n$. Then
\begin{equation}\label{Blambdanlemma2eq2}
  B_\lambda(s)=\frac{\pi^{n(n+1)/2}}{\prod_{m=1}^n(m-1)!}\,i^{nk}\,2^{-n(2n+1)s+3n/2}\,\gamma_n\Big((2n+1)s-\frac12+k\Big),
\end{equation}
by Proposition \ref{scalarminKtypeprop}. On the other hand,
\begin{equation}\label{Blambdanlemma2eq3}
  B_\lambda(s)=i^{nk}\,\bigg(\prod_{j=1}^n\beta(k-j,s)\bigg)C_k(s)
\end{equation}
by Lemma \ref{Blambdanlemma1}. The assertion follows by comparing the two expressions.
\end{proof}

\begin{proposition}\label{archzetaprop}
 Let $\lambda=\ell_1e_1+\ldots+\ell_ne_n$, $\ell_1>\ldots>\ell_n>0$, be a Harish-Chandra parameter for which the condition \eqref{lambdaconditioneq} is satisfied. Denote $k_j=\ell_j+j$ and $k=k_1$. Let $\pi=\dot\pi_\lambda$ be the corresponding holomorphic discrete series representation of $G_{2n}(\R)$  and let $w_\lambda$ be a vector in $\pi$ spanning the $K$-type $\rho_k$. Let $f_k$ be the element of $I(\sgn^k,s)$ given by \eqref{degprincserhollemmaeq3c}. Then
 $$
  Z(s,f_k,w_\lambda)=i^{nk}\,\pi^{n(n+1)/2} A_{\mathbf{k}}((2n+1)s- 1/2) \, w_\lambda
 $$
 with  the function $A_{\mathbf{k}}(z)$ is defined as
 \begin{equation}\label{archzetapropeq2}
  A_{\mathbf{k}}(z)=2^{-n(z-1)}\bigg(\prod_{j=1}^n
  \prod_{i=1}^j\frac{1}{z+k-1-j+2i}\bigg)\bigg(\prod_{j=1}^n
  \prod_{i=0}^{\frac{k-k_j}2-1}\frac{z-(k-1-j-2i)}{z+(k-1-j-2i)}\bigg).
 \end{equation}
\end{proposition}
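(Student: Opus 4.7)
The plan is to assemble \eqref{Zfkwkeq2} and the two preceding lemmas into a single product formula, then carry out a purely algebraic gamma-function manipulation to identify the result with $A_{\mathbf{k}}(z)$ evaluated at $z=(2n+1)s-1/2$. First, note that \eqref{Zfkwkeq2} already reduces the computation to showing $B_\lambda(s)=i^{nk}\pi^{n(n+1)/2}A_{\mathbf{k}}((2n+1)s-1/2)$. Substituting the formula of Lemma \ref{Blambdanlemma2} into the formula of Lemma \ref{Blambdanlemma1} gives
\begin{equation*}
 B_\lambda(s)=i^{nk}\,\pi^{n(n+1)/2}\,\frac{2^{-n(2n+1)s+3n/2}}{\prod_{m=1}^n(m-1)!}\,\gamma_n\Big((2n+1)s-\tfrac12+k\Big)\,\prod_{j=1}^n\frac{\beta(\ell_j,s)}{\beta(k-j,s)}.
\end{equation*}
Setting $z=(2n+1)s-\frac12$, so that $2^{-n(2n+1)s+3n/2}=2^{-n(z-1)}$ and the argument of $\gamma_n$ becomes $z+k$, it remains only to identify the two ``products'' in $A_{\mathbf{k}}(z)$.

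For the first product, the explicit formula \eqref{gammanlemmaeq2} directly yields
\begin{equation*}
 \frac{\gamma_n(z+k)}{\prod_{m=1}^n(m-1)!}=\prod_{m=1}^n\prod_{i=1}^m\frac{1}{z+k-m-1+2i},
\end{equation*}
which matches the first factor of $A_{\mathbf{k}}(z)$ after relabelling $m\to j$. For the second product, I would use the identity $(n+\tfrac12)s+\tfrac14=\tfrac{z+1}{2}$ to rewrite $\beta(m,s)=\Gamma(\tfrac{z+1+m}{2})\Gamma(\tfrac{z+1-m}{2})/\Gamma(z+1)$, and then, with $d_j:=(k-k_j)/2=\tfrac12((k-j)-\ell_j)$ (a non-negative integer by the parity hypothesis \eqref{lambdaconditioneq}), reduce
\begin{equation*}
 \frac{\beta(\ell_j,s)}{\beta(k-j,s)}=\frac{\Gamma(\tfrac{z+1+\ell_j}{2})}{\Gamma(\tfrac{z+1+\ell_j}{2}+d_j)}\cdot\frac{\Gamma(\tfrac{z+1-\ell_j}{2})}{\Gamma(\tfrac{z+1-\ell_j}{2}-d_j)}
\end{equation*}
to a finite product using $\Gamma(x+d)=\prod_{i=0}^{d-1}(x+i)\,\Gamma(x)$. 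A short calculation gives
\begin{equation*}
 \frac{\beta(\ell_j,s)}{\beta(k-j,s)}=\prod_{i=0}^{d_j-1}\frac{z-\ell_j-1-2i}{z+\ell_j+1+2i},
\end{equation*}
and reversing the index (replacing $i$ by $d_j-1-i$) together with the identity $\ell_j+1+2(d_j-1-i)=k-1-j-2i$ turns this into exactly the second product appearing in \eqref{archzetapropeq2}.

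The main obstacle will be the bookkeeping of the parameters in the gamma-function ratio: keeping straight the roles of $\ell_j$, $k_j=\ell_j+j$, and $k=k_1$, together with the reindexing needed to bring the $\beta$-quotient into the form displayed in $A_{\mathbf{k}}(z)$. The parity hypothesis \eqref{lambdaconditioneq} enters only to guarantee that each $d_j$ is a non-negative integer, so that the relevant $\Gamma$-ratios really are finite products and no poles intervene. Once these identifications are made, the proposition follows at once.
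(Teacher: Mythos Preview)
Your proposal is correct and follows essentially the same approach as the paper: combine Lemmas \ref{Blambdanlemma1} and \ref{Blambdanlemma2}, substitute $z=(2n+1)s-\tfrac12$, read off the first product from \eqref{gammanlemmaeq2}, and reduce each $\beta(\ell_j,s)/\beta(k-j,s)$ to a finite product via the Gamma (equivalently, Beta) recursion, then reindex. The only cosmetic difference is that the paper phrases the last step via the identity $B(x+m,y-m)=B(x,y)\prod_{i=0}^{m-1}\frac{x+i}{y-i-1}$ rather than expanding $\beta$ through Gamma functions, but this is the same computation.
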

\begin{proof}
By Lemma \ref{Blambdanlemma1} and Lemma \ref{Blambdanlemma2}, $Z(s,f_k,w_\lambda) = B_\lambda(s) w_\lambda$ with
\begin{equation}\label{archzetapropeq3}
  B_\lambda(s)=i^{nk}\,\frac{\pi^{n(n+1)/2}}{\prod_{m=1}^{n}(m-1)!}2^{-n(2n+1)s+3n/2}\,\gamma_n\Big((2n+1)s-\frac12+k\Big)\prod_{j=1}^n\frac{\beta(\ell_j,s)}{\beta(k-j,s)}.
\end{equation}
Inductively one confirms the identity
\begin{equation}\label{archzetapropeq4}
 B(x+m,y-m)=B(x,y)\prod_{i=0}^{m-1}\frac{x+i}{y-i-1}
\end{equation}
for any integer $m\geq0$. Use the abbreviation $t=(n+\frac12)s+\frac14$. Applying the above formula with $m=\frac{k-k_j}2$, which by \eqref{lambdaconditioneq} is a non-negative integer, and replacing $i$ by $m-1-i$, we get
\begin{equation}
 \frac{\beta(\ell_j,s)}{\beta(k-j,s)}
=\prod_{i=0}^{m-1}\frac{t-\frac{\ell_j}2-m+i}{t+\frac{\ell_j}2+m-1-i}.
\end{equation}
The result follows by using formula \eqref{gammanlemmaeq2} for $\gamma_n$.\end{proof}
\begin{remark}\label{rem:Akrational}
Using \eqref{archzetapropeq2}, one can check that $A_\mathbf{k}(t)$ is a non-zero rational number for any integer $t$ satisfying $0 \le t \le k_n-n$.
\end{remark}
\section{The global integral representation}\label{global-int-section}
\subsection{The main result}\label{global-int-section-main}
Consider the global field $F = \Q$ and its ring of adeles $\A = \A_\Q$. All the results are easily generalizable to a totally real number field. Let $\pi\cong\otimes \pi_p$ be a cuspidal automorphic representation of $G_{2n}(\A)$. We assume that $\pi_\infty$ is a holomorphic discrete series representation $\pi_{\mathbf{k}}$ with ${\mathbf{k}}=k_1e_1+\ldots+k_ne_n$, where $k_1\ge\ldots\ge k_n>n$ and all $k_i$ have the same parity. (From now on it is more convenient to work with the minimal $K$-type $\mathbf{k}$ rather than the Harish-Chandra parameter $\lambda$.) We set $k=k_1$. Let $\chi=\otimes\chi_p$ be a character of $\Q^\times \bs \A^\times$ such that $\chi_\infty = \sgn^k$. Let $N=\prod_{p|N} p^{m_p}$ be an integer such that \begin{itemize}
\item For each finite prime $p \nmid N$ both $\pi_p$ and $\chi_p$ are unramified.
 \item For a  prime $p |N$, we have $\chi_p |_{(1+p^{m_p}\Z_p) \cap \Z_p^\times} = 1$ and $\pi_p$ has a vector $\phi_p$ that is right invariant under the principal congruence subgroup $\Gamma_{2n}(p^{m_p})$ of $\Sp_{2n}(\Z_p)$.
\end{itemize}

Let $\phi$ be a cusp form in the space of $\pi$ corresponding to a pure tensor $\otimes \phi_p$, where the local vectors are chosen as follows. For $p \nmid N$ choose $\phi_p$ to be a spherical vector; for a $p |N$ choose $\phi_p$ to be a vector right invariant under $\Gamma_{2n}(p^{m_p})$; and for $p = \infty$ choose  $\phi_\infty$ to be a vector in $\pi_\infty$ spanning the $K_\infty$-type $\rho_k$; see Lemma \ref{scalarKtypeslemma}. Let $f = \otimes f_p \in I(\chi,s)$ be composed of the following local sections. For a finite prime $p \nmid N$ let $f_p$ be the spherical vector normalized by $f_p(1) = 1$; for $p|N$ choose $f_p$ as in Sect.~\ref{s:badplaces} (with the positive integer $m$ of that section equal to the $m_p$ above); and for $p=\infty$, choose $f_\infty$ by \eqref{degprincserhollemmaeq3c}. Define $L^{N}(s,\pi \boxtimes \chi, \varrho_{2n+1}) = \prod_{\substack{ p \nmid N \\ p\ne \infty}} L(s,\pi_p \boxtimes \chi_p, \varrho_{2n+1})$, where the local factors on the right are given by \eqref{Lspichidefeq}.

Next, for any $h \in \prod_{p<\infty}\Sp_{4n}(\Z_p)$, define $f^{(h)}(g,s) = f(gh^{-1}, s)$. Let $Q$ denote the element $Q_n$ embedded diagonally in $\prod_{p<\infty}\Sp_{4n}(\Z_p)$, and for any $\tau \in \hat{\Z}^\times = \prod_{p<\infty}\Z_p^\times$, let $Q_\tau = \mat{\tau I_{2n}}{}{}{I_{2n}}Q \mat{\tau^{-1} I_{2n}}{}{}{I_{2n}}.$ We can now state our global integral representation.

\begin{theorem}\label{global-thm}
 Let the notation be as above. Then the function $L^N(s,\pi \boxtimes \chi, \varrho_{2n+1})$ can be analytically continued to a meromorphic function of $s$ with only finitely many poles. Furthermore, for all $s \in \C$ and $g\in G_{2n}(\A)$, and any $\tau \in \hat{\Z}^\times$,
 \begin{align}\label{global-int-formula}
  Z(s, f^{(Q_\tau)}, \phi)(g) =  &\frac{L^N((2n+1)s+1/2,\pi \boxtimes \chi, \varrho_{2n+1})}{L^N((2n+1)(s+1/2), \chi)\prod_{j=1}^nL^N((2n+1)(2s+1)-2j, \chi^2)} \nonumber \\ &\times  i^{nk}\,\chi(\tau)^{-n} \, \pi^{n(n+1)/2}  \bigg(\prod\limits_{p|N}{\rm vol}(\Gamma_{2n}(p^{m_p}))\bigg) A_{\mathbf{k}}((2n+1)s- 1/2)\phi(g),
 \end{align}
 with the rational function $A_{\mathbf{k}}(z)$ defined as in Proposition \ref{archzetaprop}.
\end{theorem}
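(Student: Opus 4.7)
The plan is to assemble the three local calculations already carried out in the paper via the Basic Identity. Write $f^{(Q_\tau)} = \otimes_p f_p^{(h_p)}$, where $h_p$ is the image of $Q_\tau$ at the place $p$; note that $h_\infty = 1$, and $h_p \in G_{4n}(\Z_p)$ for every finite $p$ (since $\tau_p \in \Z_p^\times$). For $\Re(s)$ sufficiently large, Proposition \ref{interopprop} justifies absolute convergence of all local zeta integrals, and Theorem \ref{basicidentity} asserts that $Z(s, f^{(Q_\tau)}, \phi)$ corresponds under the isomorphism $\pi \cong \otimes \pi_p$ to the pure tensor $\otimes_p Z_p(s, f_p^{(h_p)}, \phi_p)$.

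Next I evaluate each local factor. At a finite prime $p \nmid N$, both $\pi_p$ and $\chi_p$ are unramified and $f_p$ is the spherical section; since $h_p \in G_{4n}(\Z_p)$, right invariance gives $f_p^{(h_p)} = f_p$, and Proposition \ref{unramifiedcalculationprop} yields
\[
 Z_p(s, f_p, \phi_p) = \frac{L((2n+1)s + 1/2, \pi_p \boxtimes \chi_p, \varrho_{2n+1})}{L((2n+1)(s+1/2), \chi_p) \prod_{j=1}^n L((2n+1)(2s+1) - 2j, \chi_p^2)} \phi_p.
\]
At a ramified prime $p \mid N$, Proposition \ref{prop:badplaces} (applied with $\tau = \tau_p$) gives $Z_p(s, f_p^{(h_p)}, \phi_p) = \chi_p(\tau_p)^{-n} \vol(\Gamma_{2n}(p^{m_p})) \phi_p$. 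Finally, at $p = \infty$, since $h_\infty = 1$, Proposition \ref{archzetaprop} gives $Z_\infty(s, f_\infty, \phi_\infty) = i^{nk} \pi^{n(n+1)/2} A_{\mathbf{k}}((2n+1)s - 1/2) \phi_\infty$.

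Taking the product over all places yields \eqref{global-int-formula} for $\Re(s)$ large. Here the unramified product assembles into the claimed ratio of partial $L$-functions, the factors $\chi_p(\tau_p)^{-n}$ collect into $\chi(\tau)^{-n}$ (the unramified and archimedean factors being trivial on $\Z_p^\times$ and on $1$ respectively), and the volumes from the ramified primes combine into $\prod_{p \mid N} \vol(\Gamma_{2n}(p^{m_p}))$.

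For the meromorphic continuation, recall that the Eisenstein series $E(g, s, f^{(Q_\tau)})$ on $G_{4n}(\A)$ admits a meromorphic continuation to all of $\C$ with only finitely many poles by Langlands' theory of Eisenstein series; passing this through the defining integral \eqref{maindefinition} gives a meromorphic continuation of $Z(s, f^{(Q_\tau)}, \phi)(g)$ with only finitely many poles. The Hecke $L$-functions $L^N((2n+1)(s+1/2), \chi)$ and $L^N((2n+1)(2s+1) - 2j, \chi^2)$ are meromorphic on $\C$ with only finitely many poles, and $A_{\mathbf{k}}((2n+1)s - 1/2)$ is a rational function. Solving \eqref{global-int-formula} for $L^N((2n+1)s + 1/2, \pi \boxtimes \chi, \varrho_{2n+1})$ then exhibits it as a meromorphic function of $s$ with only finitely many poles, and the identity \eqref{global-int-formula} extends from the region of absolute convergence to all $s \in \C$ by analytic continuation.

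The main obstacle is not really conceptual: all the hard local work has been done in Sections \ref{unramifiedcalcsec} and \ref{holdiscsec} and in the Basic Identity. What requires care is the bookkeeping of the twist by $Q_\tau$---in particular verifying that the $\chi(\tau)^{-n}$ factor arises only from ramified primes and that $Q_\tau$ acts trivially on the spherical vectors and on the archimedean section, so that the unramified and archimedean computations go through unchanged.
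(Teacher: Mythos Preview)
Your proof is correct and follows essentially the same approach as the paper's own proof, which simply cites Theorem \ref{basicidentity}, Propositions \ref{unramifiedcalculationprop}, \ref{prop:badplaces}, \ref{archzetaprop} and then appeals to analytic continuation of the left-hand side. You have spelled out the bookkeeping for the twist by $Q_\tau$ (in particular that $(Q_\tau)_p\in\Sp_{4n}(\Z_p)$ acts trivially on the spherical sections at $p\nmid N$ and is absent at $\infty$) more explicitly than the paper does, but the logical structure is identical.
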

\begin{proof}
By Theorem \ref{basicidentity}, Proposition \ref{unramifiedcalculationprop}, Proposition \ref{prop:badplaces} and Proposition \ref{archzetaprop}, the equation \eqref{global-int-formula} is true for all $\Re(s)$ sufficiently large. Since the left side defines a meromorphic function of $s$ for each $g$, it follows that the right side can be analytically continued to a meromorphic function of $s$ such that \eqref{global-int-formula} always holds.
\end{proof}

\begin{remark}
Let
\begin{equation}\label{GammaRGammaCdef}
 \Gamma_\R(s) = \pi^{-s/2} \Gamma\Big(\frac s2\Big), \qquad \Gamma_\C(s) = 2(2\pi)^{-s}\Gamma(s).
\end{equation}
Define $\varepsilon,\varepsilon_0\in\{0,1\}$ by $\chi(-1)=(-1)^\varepsilon$ and $\varepsilon_0\equiv\varepsilon+n$ mod $2$. Using
\begin{equation}\label{archLeq1}
 L(s,\pi_\infty\boxtimes\chi_\infty,\varrho_{2n+1})=\Gamma_\R(s+\varepsilon_0)\prod_{j=1}^n\Gamma_\C(s+k_i-i),
\end{equation}
a calculation shows that, with $z=(2n+1)s-1/2$,
\begin{align}\label{archLeq2}
 &\frac{L((2n+1)s+1/2,\pi_\infty \boxtimes \chi_\infty, \varrho_{2n+1})}{L((2n+1)(s+1/2), \chi_\infty)\prod_{j=1}^nL((2n+1)(2s+1)-2j, \chi_\infty^2)}\nonumber\\
 &\qquad=\kappa2^{-nz}\frac{\prod_{j=1}^n\prod_{i=0}^{k_j-n-1}(z+1+n+i-j)}{\prod_{i=0}^{(n+\varepsilon-\varepsilon_0)/2-1}(z+1+\varepsilon_0+2i)}\nonumber\\
 &\qquad=\kappa2^{-nz}\frac{\prod_{j=1}^n(z+j)}{\prod_{i=0}^{(n+\varepsilon-\varepsilon_0)/2-1}(z+1+\varepsilon_0+2i)}\prod_{j=1}^n\prod_{i=1}^{k_j-n-1}(z+1+n+i-j),
\end{align}
where
$$
 \kappa=2^{\frac{n(n+1)}2}\pi^{n^2+\frac{n+\varepsilon-\varepsilon_0}2}(2\pi)^{-\sum_{j=1}^nk_j}.$$
Evidently, the quotient in \eqref{archLeq2} is a polynomial in $z$. There is no obvious relationship of the polynomial in \eqref{archLeq2} with the function $A_{\mathbf{k}}(z)$ defined in \eqref{archzetapropeq2}.
\end{remark}
\subsection{A classical reformulation}
We now rewrite the above theorem in classical language.  For any congruence subgroup $\Gamma$ of $\Sp_{2n}(\R)$ with the symmetry property $\mat{-I_n}{}{}{I_n} \Gamma \mat{-I_n}{}{}{I_n} = \Gamma$, let $C^\infty_{k}(\Gamma)$ be the space of smooth functions $F:\:\H_n \to \C$ satisfying
\begin{equation}\label{modformeq1}
 F(\gamma Z)=j(\gamma, Z)^k F(Z)\qquad\text{for all }\gamma\in\Gamma.
\end{equation}
For any $F \in C^\infty_{k}(\Gamma)$, there is an element $\bar{F} \in C^\infty_{k}(\Gamma)$ defined via $\bar{F}(Z) = \overline{F(-\overline{Z})}$. Given functions $F_1$, $F_2$ in $C^\infty_{k}(\Gamma)$, we define the Petersson inner product $\langle F_1, F_2 \rangle$ by
$$
 \langle F_1, F_2 \rangle = \vl(\Gamma \bs \H_n)^{-1}\int\limits_{\Gamma \bs \H_n} \det(Y)^k F_1(Z) \overline{F_2(Z)}\,dZ,
$$
whenever the integral converges. Above, $dZ$ is any $\Sp_{2n}(\R)$-invariant measure on $\H_n$ (it is equal to $c\det(Y)^{-(n+1)}\,dX\,dY$ for some constant $c$).

Note that our definition of the  Petersson inner product does not depend on the normalization of measure (the choice of $c$), and is also not affected by different choices of $\Gamma$. Note also that \begin{equation}\label{innerconj}\langle F_1, F_2 \rangle = \overline{\langle \bar{F_1}, \bar{F_2} \rangle}.\end{equation}

Now, let $\Phi$ be an automorphic form on $G_{2n}(\A)$ such that $\Phi(gh)=j(h,I)^{-k}\Phi(g)$ for all $h\in K_\infty^{(n)}\cong U(n)$. Then we can define a function $F(Z)$ on the Siegel upper half space $\mathbb{H}_n$ by
\begin{equation}\label{Hndescenteq}
 F(Z)=j(g,I)^k\,\Phi(g),
\end{equation}
where $g$ is any element of $\Sp_{2n}(\R)$ with $g(I)=Z$. If $\Gamma_p$ is an open-compact subgroup of $G_{2n}(\Q_p)$ such that $\Phi$ is right invariant under $\Gamma_p$ for all $p$, then it is easy to check that $F \in C^\infty_k(\Gamma)$ where $\Gamma=\Sp_{2n}(\Q)\cap\prod_{p<\infty}\Gamma_p$.

We apply this principle to our Eisenstein series $E(g,s,f)$, where $f$ is the global section constructed in Sect.~\ref{global-int-section-main}. Consider
\begin{equation}
 E_{k,N}^{\chi}(Z,s):=j(g, I)^{k} E\Big(g,\frac{2s}{2n+1}+\frac k{2n+1}-\frac 12, f\Big),
\end{equation}
where $g$ is any element of $\Sp_{4n}(\R)$ with $g(I)=Z$. Since the series defining $E(g,s,f)$ converges absolutely for $\Re(s) > \frac12$, it follows that the series defining $E_{k,N}^{\chi}(Z,s)$  converges absolutely whenever $2\Re(s)+k >2n+1$. More generally, for any $h \in \prod_{p<\infty} \Sp_{4n}(\Z_p)$ and $g$ as above, define
\begin{equation}\label{classical-Eis-ser-1}E_{k,N}^{\chi}(Z,s; h) = j(g, I)^{k} E\Big(g,\frac{2s+k}{2n+1}-\frac 12, f^{(h)}\Big) = j(g, I)^{k} E\Big(gh^{-1},\frac{2s+k}{2n+1}-\frac 12, f\Big).\end{equation}  By the invariance properties of our local sections and by analytic continuation, it follows that for all $s$ and all $h$ as above, we have $E_{k,N}^{\chi}(Z,s;h) \in C^\infty_k(\Gamma_{4n}(N))$. As usual, $\Gamma_{4n}(N) = \{g \in \Sp_{4n}(\Z) : g \equiv I_{4n} \pmod{N}\}$ is the principal congruence subgroup of level $N$.

It is instructive to write down the functions $E_{k,N}^{\chi}(Z,s;h)$ classically. First of all, a standard calculation shows that
\begin{equation}\label{e:stdcusp}
 E_{k,N}^{\chi}(Z,s;h) = \left(E_{k,N}^{\chi}\big|_{k}h_0\right)(Z,s) := j(h_0, Z)^{-k} E_{k,N}^{\chi}(h_0 Z,s),
\end{equation}
where (using strong approximation) we let $h_0$ be an element of $\Sp_{4n}(\Z)$ such that $h_0^{-1}h \in \prod_{p|N}\Gamma_{4n}(p^{m_p}) \prod_{p \nmid N}\Sp_{4n}(\Z_p)$. This enables us to reduce to the case $h=1$ for many properties. We now write down the classical definition in this case, i.e., for $E_{k,N}^{\chi}(Z,s)$. Let $P'_{4n}(\Z) = P_{4n}(\Q)\cap \Sp_{4n}(\Z)$. Using the relevant definitions and the explication of the local sections $f_p(\gamma_p, s)$ at the end of Section~\ref{s:badplaces}, it follows that for $2\Re(s)+k >2n+1$,
\begin{align}\label{eisensteinserieseq}
E_{k,N}^{\chi}(Z,s)=\sum_{\gamma = \mat{A}{B}{C}{D} \in P'_{4n}(\Z)\bs \Gamma_{0,4n}(N)}\left(\prod_{p|N}\chi_p(\det(A))\right)\det(\Im(\gamma Z))^{s}\,j(\gamma,Z)^{-k}.
\end{align}

\begin{remark}\label{r:shimura}Shimura defined certain Eisenstein series on symplectic and unitary groups over number fields \cite{shimura83, Shimura1987, shibook1, shibook2}  and proved various properties about them. Following the notation of \cite[Sect.~17]{shibook2}, we denote Shimura's Eisenstein series by $E(z,s;k,\chi, N)$. A comparison of \cite[(16.40)]{shibook2} and \eqref{eisensteinserieseq} shows that \begin{equation}\label{e:shimuraeis}E_{k,N}^{\chi}(Z,s) =  E(z,s+k/2;k,\chi, N).\end{equation}  The above identity can also be proved adelically, by comparing the alternate description of our section at ramified places (see end of Section  \ref{s:badplaces}) with Shimura's section (see Sect.~16.5 of \cite{shibook2}).
\end{remark}

Combining \eqref{eisensteinserieseq} with \eqref{e:stdcusp}, we can now write down a similar expansion for $E_{k,N}^{\chi}(Z,s;h)$ for each $h \in \prod_{p<\infty} \Sp_{4n}(\Z_p)$. Note that if $Q$ is the element defined immediately above Theorem \ref{global-thm}, then $E_{k,N}^{\chi}(Z,s;Q) = j(Q_n, Z)^{-k} E_{k,N}^{\chi}(Q_nZ,s).$

Let $\pi$ and $\phi$ be as in Sect.\ \ref{global-int-section-main}. Let $F(Z)$ be the function on $\H_n$ corresponding to the automorphic form $\phi$ via \eqref{Hndescenteq}.  Then $F, \bar{F} \in C_k^\infty(\Gamma_{2n}(N))$ and both these functions are rapidly decreasing at infinity (as $\phi$ is a cusp form). For $Z_1,Z_2\in\H_n$, and $h \in \prod_{p<\infty}\Sp_{4n}(\Z_p)$, write $E_{k,N}^{\chi}(Z_1,Z_2,s; h)$ for $E_{k,N}^{\chi}(\mat{Z_1}{}{}{Z_2},s;h)$.
Using adelic-to-classical arguments similar to Theorem 6.5.1 of \cite{pullback}, we can now write down the classical analogue of Theorem \ref{global-thm}.
\begin{theorem}\label{classical-integral-repn}
Let the element $Q_\tau$ for each $\tau \in \hat{\Z}^\times$ be  as  defined just before Theorem \ref{global-thm}, and let $E_{k,N}^{\chi}(Z,s;Q_\tau)$ be  as defined in \eqref{classical-Eis-ser-1}. Let $F(Z)$, $\bar{F}(Z)$ be as defined above. Then we have the relation
\begin{align*}
  \left\langle E_{k,N}^{\chi}\left( \ - \ , Z_2, \frac{n}2 - \frac{k-s}{2}; Q_\tau\right), \bar{F} \right\rangle &=  \frac{L^N(s,\pi \boxtimes \chi, \varrho_{2n+1})}{L^N(s+n, \chi)\prod_{j=1}^nL^N(2s+2j-2, \chi^2)} \times A_{\mathbf{k}}(s-1)  \\
  & \times \prod_{p|N}{\rm vol}(\Gamma_{2n}(p^{m_p})) \times \frac{\chi(\tau)^{-n} i^{nk}\,\pi^{n(n+1)/2} }{\vol(\Sp_{2n}(\Z) \bs \Sp_{2n}(\R))} \times F(Z_2),
 \end{align*}
 with the rational function $A_{\mathbf{k}}(z)$ defined as in Proposition \ref{archzetaprop}.
\end{theorem}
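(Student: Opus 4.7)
The plan is to derive the statement directly from Theorem \ref{global-thm} via the adelic-to-classical translation, following the pattern of Theorem 6.5.1 of \cite{pullback}. I would evaluate the identity \eqref{global-int-formula} at a global element $g\in G_{2n}(\A)$ whose finite component is $1$ and whose archimedean component $g_\infty\in\Sp_{2n}(\R)$ satisfies $g_\infty\cdot iI_n = Z_2$. The right-hand side of \eqref{global-int-formula} then becomes (apart from the $L$-function prefactor) equal to $j(g_\infty, iI_n)^{-k}F(Z_2)$ by \eqref{Hndescenteq}, which will supply the $F(Z_2)$ appearing in the target identity modulo one $j$-factor that will be absorbed from the left.

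To unfold the left-hand side $Z(s; f^{(Q_\tau)},\phi)(g)$, I would use the invariance properties of $\phi_p$ under the prescribed congruence subgroups and invoke strong approximation for $\Sp_{2n}$; this collapses the adelic integral to an integral over $\Gamma_{2n}(N)\backslash\Sp_{2n}(\R)$, weighted by $\prod_{p\mid N}\vol(\Gamma_{2n}(p^{m_p}))$. With $Z_1:=h_\infty\cdot iI_n$, the Eisenstein-series parameter in \eqref{classical-Eis-ser-1} is pinned down by the substitution $s'=(n-k+s)/2$, which a short calculation shows corresponds to the adelic parameter $(s-1/2)/(2n+1)$ occurring in Theorem \ref{global-thm}. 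The integrand at the archimedean place then reads
\[
 j\bigl((h_\infty,g_\infty), iI_{2n}\bigr)^{-k}\, E_{k,N}^\chi\bigl((h_\infty,g_\infty)\cdot iI_{2n},\, s'; Q_\tau\bigr)\cdot j(h_\infty, iI_n)^{-k}\, F(Z_1).
\]

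A direct computation with the embedding \eqref{embedding-defn} gives $(h_\infty,g_\infty)\cdot iI_{2n} = \mat{-\overline{Z_1}}{0}{0}{Z_2}$ and $j\bigl((h_\infty,g_\infty),iI_{2n}\bigr) = \overline{j(h_\infty,iI_n)}\cdot j(g_\infty, iI_n)$. Using the classical identity $|j(h_\infty,iI_n)|^{-2k} = \det(Y_1)^k$ and the right $K^{(n)}$-invariance of the integrand (the weight-$k$ characters from $\phi_\infty$ and $E$ at the archimedean place multiply to the modulus squared), I would apply \eqref{Ghaarpropeq1} to convert the $\Sp_{2n}(\R)$-integral into an integral on $\H_n$ against $\det(Y_1)^{-(n+1)}\,dZ_1$. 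The change of variable $Z_1\mapsto -\overline{Z_1}$ (which preserves $\Gamma_{2n}(N)\backslash\H_n$ together with the invariant measure) combined with the definitional identity $F(-\overline{Z_1}) = \overline{\bar F(Z_1)}$ puts the integrand into the Petersson form $\det(Y_1)^k\, E_{k,N}^\chi(Z_1,Z_2,s';Q_\tau)\,\overline{\bar F(Z_1)}$, producing $\vol(\Gamma_{2n}(N)\backslash\H_n)\cdot\langle E_{k,N}^\chi(\,\cdot\,,Z_2,s';Q_\tau),\bar F\rangle$.

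Finally, I would equate the unfolded LHS with the RHS of \eqref{global-int-formula}, cancel the common factor $j(g_\infty,iI_n)^{-k}$, and simplify via
\[
 \vol(\Gamma_{2n}(N)\backslash\H_n) \,=\, [\Sp_{2n}(\Z):\Gamma_{2n}(N)]\cdot\vol(\Sp_{2n}(\Z)\backslash\Sp_{2n}(\R)) \,=\, \prod_{p\mid N}\vol(\Gamma_{2n}(p^{m_p}))^{-1}\cdot\vol(\Sp_{2n}(\Z)\backslash\Sp_{2n}(\R)),
\]
which leaves precisely one copy of $\prod_{p\mid N}\vol(\Gamma_{2n}(p^{m_p}))$ in the numerator and one of $\vol(\Sp_{2n}(\Z)\backslash\Sp_{2n}(\R))$ in the denominator, matching the stated identity. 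The main conceptual hurdle is recognizing that the minus signs in the diagonal embedding \eqref{embedding-defn} force the appearance of $-\overline{Z_1}$ in the Eisenstein argument, which is what converts $F$ into $\bar F$ in the Petersson pairing; once this is identified, the remainder is routine accounting of Haar measure normalizations and the parameter shift between the adelic and classical $s$-variables.
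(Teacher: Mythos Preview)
Your proposal is correct and follows precisely the approach indicated in the paper, which does not give a detailed proof but simply refers to ``adelic-to-classical arguments similar to Theorem 6.5.1 of \cite{pullback}''. Your unfolding via strong approximation, the identification $(h_\infty,g_\infty)\cdot iI_{2n}=\mat{-\overline{Z_1}}{}{}{Z_2}$ coming from the signs in \eqref{embedding-defn}, the parameter check $s'=\tfrac{n-k+s}{2}\leftrightarrow \tfrac{s-1/2}{2n+1}$, and the volume bookkeeping all match what the paper intends; you have supplied the details the paper omits.
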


\begin{corollary}\label{criticalcor}Let $r$ be a positive integer satisfying $1\le r \le  k_n-n$, $r \equiv k_n -n \pmod{2}$. Then
 $$
 \left \langle E_{k,N}^{\chi}\left(\ - \ , Z_2, \frac{n}2- \frac{k-r}2; Q_\tau\right) ,\bar{F} \right \rangle=    \frac{i^{nk} \chi(\tau)^{-n} c_{k,r,n,N} \ L^N(r,\pi \boxtimes \chi, \varrho_{2n+1})}{L^N(r+n, \chi)\prod_{j=1}^nL^N(2r+2j-2, \chi^2)} F(Z_2),
 $$
 where
 $c_{k,r,n,N} =  \frac{ \pi^{n(n+1)/2} \prod_{p|N}{\rm vol}(\Gamma_{2n}(p^{m_p})) }{\vol(\Sp_{2n}(\Z) \bs \Sp_{2n}(\R))} \times A_{\mathbf{k}}(r-1)$ is a non-zero rational number.
\end{corollary}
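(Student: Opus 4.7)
The plan is to derive the corollary as a direct specialization of Theorem~\ref{classical-integral-repn} at the integer point $s=r$, and then to verify that the resulting constant $c_{k,r,n,N}$ is a non-zero rational number. First I would simply substitute $s=r$ into the identity of Theorem~\ref{classical-integral-repn}: the exponent $\frac{n}{2}-\frac{k-s}{2}$ becomes $\frac{n}{2}-\frac{k-r}{2}$, the $L$-function quotient on the right becomes $L^N(r,\pi\boxtimes\chi,\varrho_{2n+1})/\bigl(L^N(r+n,\chi)\prod_{j=1}^n L^N(2r+2j-2,\chi^2)\bigr)$, and the archimedean factor $A_{\mathbf{k}}(s-1)$ becomes $A_{\mathbf{k}}(r-1)$, matching exactly the formula claimed in the corollary. (Nothing needs to be said about the analytic behaviour of either side at $s=r$: Theorem~\ref{classical-integral-repn} is an identity of meromorphic functions of $s$, valid for every $s\in\C$ away from the finitely many poles, and all factors on the right are manifestly finite at $s=r$ in the given range.)

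The only real content to verify, then, is the assertion that $c_{k,r,n,N}$ is a \emph{non-zero rational} number. The hypotheses $1\le r\le k_n-n$ place $r-1$ inside the window $0\le r-1\le k_n-n$ allowed by Remark~\ref{rem:Akrational}; consequently $A_{\mathbf{k}}(r-1)\in \Q^\times$. The local volume factors $\vol(\Gamma_{2n}(p^{m_p}))=[\Sp_{2n}(\Z_p):\Gamma_{2n}(p^{m_p})]^{-1}$ are clearly positive rationals.

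The remaining factor $\pi^{n(n+1)/2}/\vol(\Sp_{2n}(\Z)\bs\Sp_{2n}(\R))$ also lies in $\Q^\times$; this is classical. Concretely, the Haar measure on $\Sp_{2n}(\R)$ fixed by \eqref{Ghaarpropeq1} gives Siegel's well-known closed-form volume
\[
 \vol(\Sp_{2n}(\Z)\bs\Sp_{2n}(\R)) \;=\; 2\prod_{j=1}^{n}\frac{(j-1)!\,\zeta(2j)}{\pi^{j}},
\]
and the classical Euler evaluation $\zeta(2j)\in \pi^{2j}\Q^\times$ makes the total power of $\pi$ on the right equal to $\sum_{j=1}^n j = n(n+1)/2$, so the quotient with $\pi^{n(n+1)/2}$ is a non-zero rational. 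Multiplying the three rational contributions together yields $c_{k,r,n,N}\in\Q^\times$, which finishes the proof.

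The only potential obstacle is the mild bookkeeping around the volume identity above, but this is entirely classical (one could equally well cite Harder's or Siegel's formula). Everything else is pure substitution into Theorem~\ref{classical-integral-repn}, so the proof will be essentially a one-line reference followed by the rationality verification.
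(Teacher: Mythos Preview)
Your proposal is correct and follows exactly the route the paper intends: the corollary is stated without proof precisely because it is the specialization $s=r$ of Theorem~\ref{classical-integral-repn}, together with Remark~\ref{rem:Akrational} for the non-zero rationality of $A_{\mathbf{k}}(r-1)$ and Siegel's volume formula for the rationality of $\pi^{n(n+1)/2}/\vol(\Sp_{2n}(\Z)\bs\Sp_{2n}(\R))$. Your verification of the constant is more explicit than anything the paper writes down, but the argument is the expected one.
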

\subsection{Near holomorphy and rationality of Eisenstein series}
In this section, we will prove two important properties of the Eisenstein series $E_{k,N}^{\chi}(Z, -m_0;h)$ for certain non-negative integers $m_0$. These are stated as Propositions \ref{propnearholo} and \ref{p:eisrational}.

For each positive integer $r$, let $N(\H_{r})$ be the space of nearly holomorphic functions on $\H_r$. By definition, these are the functions which are polynomials in the entries of ${\rm Im}(Z)^{-1}$ with holomorphic functions on $\H_r$ as coefficients. For each discrete subgroup $\Gamma$ of $\Sp_{2r}(\R)$, let $N_k(\Gamma)$ be the space of all functions $F$ in $N(\H_r)$ that satisfy $F(\gamma Z) = j(\gamma, Z)^k F(Z)$ for all $Z \in \H_r$ and $\gamma \in \Gamma$  (if $r=1$, we also need an additional ``no poles at cusps" condition, as explained in \cite{PSS14}). The spaces $N_k(\Gamma)$ are known as nearly holomorphic modular forms of weight $k$ for $\Gamma$. We let $M_k(\Gamma) \subset N_k(\Gamma)$ denote the usual space of holomorphic modular forms of weight $k$ for $\Gamma$. The following important result is due to Shimura.

\begin{proposition}\label{propnearholo}
 Suppose that $k\ge n+1$ and let $h \in \prod_{p<\infty}\Sp_{4n}(\Z_p)$. If $k=n+1$, assume further that $\chi^2 \neq 1$. Then $E_{k,N}^{\chi}(Z,0; h)$ is a holomorphic Siegel modular form of degree $2n$ and weight $k$ with respect to the principal congruence subgroup $\Gamma_{4n}(N)$ of $\Sp_{4n}(\R)$. More generally, let $0 \le m_0 \le \frac{k}2-\frac{n+1}2$ be an integer, and exclude the case $m_0 = \frac{k}2-\frac{n+1}2$, $\chi^2 = 1$. Then
 $$
  E_{k,N}^{\chi}(Z, -m_0;h) \in N_k(\Gamma_{4n}(N)).
 $$
\end{proposition}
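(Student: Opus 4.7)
My plan is to deduce the proposition directly from Shimura's well-developed theory of Eisenstein series on $\Sp_{2n}$, using the identification \eqref{e:shimuraeis} established in Remark \ref{r:shimura}.

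First I would reduce to the case $h=1$. By \eqref{e:stdcusp}, there is an $h_0 \in \Sp_{4n}(\Z)$ such that $E_{k,N}^{\chi}(Z,-m;h) = (E_{k,N}^{\chi}|_k h_0)(Z,-m)$. The slash operator by an element of $\Sp_{4n}(\Z)$ sends $N_k(\Gamma)$ into $N_k(h_0^{-1}\Gamma h_0)$ for any congruence subgroup $\Gamma$ (it clearly preserves the property of being a polynomial in the entries of $\Im(Z)^{-1}$ with holomorphic coefficients, and it sends $\Gamma$-automorphy into $h_0^{-1}\Gamma h_0$-automorphy). Since $h_0\in\Sp_{4n}(\Z)$ normalizes $\Gamma_{4n}(N)$, it suffices to show $E_{k,N}^{\chi}(Z,-m) \in N_k(\Gamma_{4n}(N))$, and that in the holomorphic range this actually lies in $M_k(\Gamma_{4n}(N))$.

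Next I would translate to Shimura's parametrization. By \eqref{e:shimuraeis} we have $E_{k,N}^{\chi}(Z,-m) = E(Z, k/2 - m;\, k, \chi, N)$ in Shimura's notation. The range $0 \le m \le k/2 - (n+1)/2$ corresponds to $s = k/2 - m$ lying in the interval $[(n+1)/2, k/2]$, which is precisely the range of integer/half-integer critical points where Shimura establishes near-holomorphy for the symplectic Eisenstein series. In particular, Proposition 17.6 and Theorem 17.9 of \cite{shibook2} (cf.\ also \cite{shimura83} for the earlier version) assert: for any integer $m$ with $0 \le m \le (k-n-1)/2$, $E(Z, k/2 - m;\, k, \chi, N)$ is a nearly holomorphic modular form of weight $k$ on $\Gamma_{4n}(N)$, with the single exclusion $m = (k-n-1)/2$ when $\chi^2=1$. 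The value $m=0$ corresponds to the rightmost critical point $s = k/2$, where Shimura's result further gives that the Eisenstein series is honestly holomorphic, provided $k \ge n+1$ (and also $\chi^2\neq 1$ when $k=n+1$, the endpoint of the convergence range where one must avoid a potential pole of the completed series). These are exactly the hypotheses stated in the proposition.

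The main (and essentially only) obstacle here is bookkeeping: matching Shimura's conventions for the shift of the complex parameter, the normalization of the Eisenstein series at bad primes, and the character $\chi$, to the ones used in this paper. The identity \eqref{e:shimuraeis} takes care of the parameter shift; the comparison at bad primes was noted at the end of Section \ref{s:badplaces} (the local section we defined via $\Gamma_{0,4n}(p^{m_p})$ coincides with Shimura's); and the level subgroup $\Gamma_{4n}(N)$ is the same on both sides. With these identifications in place, the near-holomorphy and holomorphy assertions follow from Shimura's theorems without any further work, and no new analytic input is required from our side.
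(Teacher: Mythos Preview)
Your proposal is correct and takes essentially the same approach as the paper: reduce to $h=1$ via \eqref{e:stdcusp} (using that $h_0\in\Sp_{4n}(\Z)$ normalizes $\Gamma_{4n}(N)$ and that the slash operator preserves near-holomorphy), then invoke Shimura's Theorem 17.9 in \cite{shibook2} through the identification \eqref{e:shimuraeis}. The paper's own proof is in fact a terse two-sentence version of exactly what you wrote.
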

\begin{proof} If $h=1$, this is a special case of Theorem 17.9 of \cite{shibook2} (see Remark \ref{r:shimura}). The proof for general $h$ is now an immediate consequence of \eqref{e:stdcusp}.
\end{proof}
\begin{remark}
 In the absolutely convergent range $k\ge 2n+2$, $0 \le m_0 \le \frac{k}2-n-1$, the above proposition can also be proved directly, using the expansion \eqref{eisensteinserieseq}. To go beyond the realm of absolute convergence, one needs delicate results involving analytic behavior of Fourier coefficients of Eisenstein series, which have been done by Shimura.
\end{remark}
Next, for any nearly holomorphic modular form $F \in N_k(\Gamma)$ and $\sigma \in {\rm Aut}(\C)$ we let ${}^\sigma F$ denote the nearly holomorphic modular form obtained by letting $\sigma$ act on the Fourier coefficients of $F$. Note that if $\sigma$ is complex conjugation  then ${}^\sigma F$ equals $\bar{F}$. Denote $$h_\tau := \mat{\tau I_{2n}}{}{}{I_{2n}}h \mat{\tau^{-1} I_{2n}}{}{}{I_{2n}}.$$We will prove the following result.

\begin{proposition}\label{p:eisrational}
Let the setup be as in Proposition \ref{propnearholo}. Let $\sigma \in \Aut(\C)$, and let $\tau \in \hat{\Z}^\times$ be the element corresponding to $\sigma$ via the natural map $\Aut(\C) \rightarrow \Gal(\Q_{\rm{ab}}/\Q) \simeq \hat{\Z}^\times$. (Concretely, this means that  for each positive integer $m$, $\sigma(e^{2 \pi i/m}) = e^{2 \pi i t/m}$ where $t \equiv \tau \mod m$.) Then
$$
 {}^\sigma E_{k,N}^\chi(Z, -m_0; h) = \frac{\sigma(\pi^{2m_0n})}{\pi^{2m_0n}} E_{k,N}^{{}^\sigma\!\chi}\left(Z, -m_0;h_\tau\right).
$$
\end{proposition}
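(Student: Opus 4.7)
The plan is to reduce this rationality statement to Shimura's classical reciprocity law for his Eisenstein series, and then to reconcile the adelic $h_\tau$-twist with the classical Galois action on Fourier coefficients.

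By Remark~\ref{r:shimura}, when $h=1$ our Eisenstein series coincides with Shimura's $E(z, s+k/2; k, \chi, N)$. For $s=-m$ in the near-holomorphy range of Proposition~\ref{propnearholo}, Shimura has established explicit rationality and Galois-equivariance for the Fourier coefficients (see \cite[Theorem~17.9 and Chapters~IV--V]{shibook2}): after dividing by $\pi^{2mn}$, the coefficients lie in an abelian extension of $\Q(\chi)$, and the action of $\sigma \in \Aut(\C)$ sends them to the corresponding coefficients attached to ${}^\sigma\chi$. Noting that $h_\tau = I$ when $h=1$ (since $\mat{\tau I_{2n}}{}{}{I_{2n}}\mat{\tau^{-1} I_{2n}}{}{}{I_{2n}} = I$), this directly yields the claim in the case $h=1$, namely
$$
 {}^\sigma E_{k,N}^\chi(Z, -m) = \frac{\sigma(\pi^{2mn})}{\pi^{2mn}} E_{k,N}^{{}^\sigma\chi}(Z, -m).
$$

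For general $h$, I would combine the $h=1$ case with strong approximation. Choose $h_0 \in \Sp_{4n}(\Z)$ with $h_0^{-1}h \in \prod_{p|N}\Gamma_{4n}(p^{m_p})\prod_{p\nmid N}\Sp_{4n}(\Z_p)$, so that by \eqref{e:stdcusp} we have $E_{k,N}^\chi(Z,-m;h) = (E_{k,N}^\chi(\cdot,-m)|_k h_0)(Z)$. Since $h_0 \in \Sp_{4n}(\Q)$ is rational, the slash operator $|_k h_0$ commutes with ${}^\sigma$; applying $\sigma$ and invoking the $h=1$ case yields
$$
 {}^\sigma E_{k,N}^\chi(Z, -m; h) = \frac{\sigma(\pi^{2mn})}{\pi^{2mn}} \bigl(E_{k,N}^{{}^\sigma\chi}(\cdot, -m)|_k h_0\bigr)(Z).
$$
It remains to show that $h_0$ is also a valid strong-approximation representative for $h_\tau$ relative to the adelic section defined by the twisted character ${}^\sigma\chi$, which will identify the right-hand side with $\frac{\sigma(\pi^{2mn})}{\pi^{2mn}} E_{k,N}^{{}^\sigma\chi}(Z,-m;h_\tau)$.

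The main obstacle is this last reconciliation: verifying precisely that replacing $\chi$ by ${}^\sigma\chi$ in the adelic section definition of Section~\ref{s:badplaces} corresponds to conjugating $h$ by $\mat{\tau I_{2n}}{}{}{I_{2n}}$. The underlying principle is that the Galois action on Fourier coefficients of modular forms of level $N$ interacts with the connected components of the relevant Shimura variety via the natural map $\Aut(\C) \to \Gal(\Q(\mu_N)/\Q) \simeq (\Z/N\Z)^\times$, which is precisely the $\tau$-factor appearing in the statement. Making this match rigorous requires a careful chase through the explicit local section $f_p$ at primes $p \mid N$ together with the classical expansion \eqref{eisensteinserieseq}, tracking how the twist $\chi_p \mapsto {}^\sigma\chi_p$ translates into the prescribed adelic conjugation on $h$.
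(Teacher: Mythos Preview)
Your argument contains a genuine error at the step where you assert that ``since $h_0 \in \Sp_{4n}(\Q)$ is rational, the slash operator $|_k h_0$ commutes with ${}^\sigma$.'' This is false. The action of $\sigma$ is defined on Fourier coefficients, and for $F \in M_k(\Gamma_{4n}(N))$ the $q$-expansion of $F|_k h_0$ involves the coefficients of $F$ multiplied by $N$-th roots of unity; $\sigma$ acts on those roots of unity as well. The correct relation is exactly Shimura's Lemma 10.5 in \cite{shibook2}: ${}^\sigma(F|_k h) = ({}^\sigma F)|_k h_\tau$. This is precisely the mechanism that produces the $h_\tau$ on the right-hand side, not any after-the-fact matching of strong approximation representatives. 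Indeed, your own argument is internally inconsistent: if $|_k h_0$ genuinely commuted with $\sigma$, you would obtain $E_{k,N}^{{}^\sigma\chi}(Z,-m;h)$, not $E_{k,N}^{{}^\sigma\chi}(Z,-m;h_\tau)$, and your proposed ``reconciliation'' would have to show $h$ and $h_\tau$ give the same slash, which they do not modulo $N$ unless $\tau \equiv 1$.

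The paper's proof proceeds differently and avoids this trap. It first treats the \emph{holomorphic} case $m=0$: Feit's theorem gives the $h=1$ statement, and then Shimura's Lemma 10.5 (valid for holomorphic forms) supplies the twist ${}^\sigma(F|_k h) = ({}^\sigma F)|_k h_\tau$ for general $h$. Only then does one pass to general $m$ by applying the Maass--Shimura operator $\Delta^m_{k-2m}$, using the identity $\Delta^m_{k-2m} E_{k-2m,N}^\chi(Z,0;h) = d\,E_{k,N}^\chi(Z,-m;h)$ with $d \in \Q^\times$, together with the equivariance ${}^\sigma(\pi^{-2mn}\Delta^m_{k-2m} F) = \pi^{-2mn}\Delta^m_{k-2m}({}^\sigma F)$. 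This route both justifies the $h_\tau$ twist rigorously and explains the appearance of the factor $\sigma(\pi^{2mn})/\pi^{2mn}$, which in your sketch is simply asserted by appeal to Shimura's Theorem~17.9 (a near-holomorphy statement, not a Galois-equivariance statement).
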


\begin{proof}
We will prove the result in several steps.
\begin{lemma}[Feit]Assume that $k \ge n+1$, and exclude the case $k=n+1, \chi^2=1$. Then $
 {}^\sigma E_{k,N}^\chi(Z, 0) = E_{k,N}^{{}^\sigma\!\chi}\left(Z, 0\right)$. In other words, Proposition \ref{p:eisrational} holds for $m_0=0$, $h=1$.
\end{lemma}
\begin{proof}
This is Theorem 15.1 of \cite{feit86}.
\end{proof}

The next step is to extend the above lemma to the case of general $h$. This follows from a very general lemma of Shimura. For any Siegel modular form $F \in M_k(\Gamma_{2r}(N))$ and any $h \in \prod_{p<\infty} \Sp_{2r}(\Z_p)$, define $F|_k h \in M_k(\Gamma_{2r}(N))$ by $F|_k h := F|_k {h_0}$ where  we choose $h_0$ to be any element of $\Sp_{2r}(\Z)$ with $(h_0^{-1}h)_p \in \Gamma_{2r}(N)$ for all $p$. Note that $E_{k,N}^\chi(Z, 0) |_k h = E_{k,N}^\chi(Z, 0; h).$
\begin{lemma}[Shimura] Let $\sigma, \tau$ be as in  Proposition \ref{p:eisrational}. Then, for all $h \in \prod_{p<\infty} \Sp_{2r}(\Z_p)$, and $F \in M_k(\Gamma_{2r}(N))$, we have ${}^\sigma (F|_k h) = ({}^\sigma F)|_k h_\tau.$
\end{lemma}
\begin{proof}
This is immediate from Lemma 10.5 of \cite{shibook2} and its proof.
\end{proof}

Combining the above two lemmas, since $k-2m_0 \ge n+1$, we see now that
\begin{equation}\label{eis1}
 {}^\sigma E_{k-2m_0,N}^\chi(Z, 0; h) =  E_{k-2m_0,N}^{{}^\sigma\!\chi}\left(Z, 0;h_\tau\right)
\end{equation}
for all $k$, $m_0$, $\chi$ as in Proposition \ref{propnearholo}. Next, we need the Maass-Shimura differential operator $\Delta^p_k$ which is defined in \cite[4.10a]{Shimura1987} or \cite[p.~146]{shibook2}. Note that $\Delta^p_k$ takes $N_k(\Gamma)$ to $N_{k+2p}(\Gamma).$ By \cite[(17.21)]{shibook2} we obtain
\begin{equation}\label{eis2}
 \Delta_{k-2m_0}^{m_0}(E_{k-2m_0,N}^\chi(Z, 0;h)) = d E_{k,N}^\chi(Z, -m_0; h),
\end{equation}
where $d$ is a non-zero rational number. (Note here that the differential operator $\Delta^p_k$ \emph{commutes} with the action of $h$).  Finally, we have the identity (see equation (1) of \cite{bouganis}):
\begin{equation}\label{eis3}
 {}^\sigma(\pi^{-2m_0n}\Delta^{m_0}_{k-2m_0} F) = \Delta^{m_0}_{k-2m_0}(\pi^{-2m_0n} \ {}^\sigma\!F).
\end{equation}
Combining \eqref{eis1}, \eqref{eis2}, and \eqref{eis3}, we conclude the proof of Proposition \ref{p:eisrational}. \end{proof}

\begin{remark}In the special case $m_0=0$, $k \ge 2n+2$, where the Eisenstein series is holomorphic and absolutely convergent, Proposition \ref{p:eisrational} also appears in \cite[p.~461]{gar2}.
\end{remark}

\section{Critical \texorpdfstring{$L$}{}-values for \texorpdfstring{$\GSp_4 \times \GL_1$}{}}\label{nearly-holo-sec}
\subsection{Preliminaries}\label{final:prelim}
For the algebraicity results of critical values of $L$-functions, we will use \cite{PSS14}. Since the results of \cite{PSS14} are available only for $n=2$, we will assume $n=2$ throughout this section. Let $\ell, m$ be non-negative integers with $m$ even and $\ell \ge 3$. We put ${\mathbf{k}}=(\ell+m)e_1+\ell e_2$ and  $k = \ell +m$. For each integer $N= \prod_p p^{m_p}$, we let $\Pi_N(\mathbf{k})$ denote the set of all  cuspidal automorphic representations $\pi\cong\otimes \pi_p$ of $G_{4}(\A)$ such that $\pi_\infty$ equals the holomorphic discrete series representation $\pi_{\mathbf{k}}$ and such that for each finite prime $p$, $\pi_p$ has a vector right invariant under the principal congruence subgroup $\Gamma_{4}(p^{m_p})$ of $\Sp_{4}(\Z_p)$. We put $\Pi(\mathbf{k}) = \bigcup_N\Pi_N(\mathbf{k})$.

We say that a character $\chi=\otimes\chi_p$ of $\Q^\times \bs
\A^\times$ is a \emph{Dirichlet character} if $\chi_\infty$ is trivial
on $\R_{>0}$. Any such $\chi$ gives rise to a homomorphism $\tilde{\chi}:(\Z/N_\chi\Z)^\times\rightarrow \C^\times$, where $N_\chi$ denotes the conductor of $\chi$.
Concretely\footnote{In fact, the map $\chi \mapsto \tilde{\chi}$ gives a
bijection between Dirichlet characters in our sense and primitive
Dirichlet characters in the classical sense.} the map $\tilde{\chi}$ is
given by $\tilde{\chi}(a) = \prod_{p|N_\chi} \chi_p^{-1}(a)$. Given a
Dirichlet character $\chi$, we define the corresponding Gauss sum by
$G(\chi) = \sum_{n \in  (\Z/N_\chi\Z)^\times }\tilde{\chi}(n) e^{2 \pi i
n/N_\chi}.$

\begin{lemma}\label{l:gauss}
 Let $\chi$, $\chi'$ be Dirichlet characters. Given $\sigma \in \Aut(\C)$, let $\tau \in \hat{\Z}^\times$ be as in Proposition \ref{p:eisrational}. The following hold:
 \begin{enumerate}
  \item $\sigma(G(\chi)) =\,^\sigma\!\chi(\tau) G({}^\sigma\! \chi).$
  \item $\sigma \left(\frac{G(\chi\chi')}{G(\chi)G(\chi')} \right) = \frac{G({}^\sigma\!\chi{}^\sigma\!\chi')}{G({}^\sigma\!\chi)G({}^\sigma\!\chi')}.$
 \end{enumerate}
\end{lemma}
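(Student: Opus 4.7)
The plan is to prove (i) by a direct manipulation of the Gauss sum and to deduce (ii) formally from (i).

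For (i), I would begin from the definition $G(\chi) = \sum_{n \in (\Z/N_\chi\Z)^\times} \tilde\chi(n) e^{2\pi i n/N_\chi}$ and apply $\sigma$ term by term. Two identities drive the calculation: $\sigma(e^{2\pi i n/N_\chi}) = e^{2\pi i t n/N_\chi}$, where $t$ is any integer representing $\tau \pmod{N_\chi}$, and $\sigma(\tilde\chi(n)) = \widetilde{{}^\sigma\chi}(n)$, the latter holding because passing from the adelic character to the Dirichlet character commutes with the $\Aut(\C)$-action on character values. This gives
\[
\sigma(G(\chi)) = \sum_{n \in (\Z/N_\chi\Z)^\times} \widetilde{{}^\sigma\chi}(n)\, e^{2\pi i t n/N_\chi}.
\]
The substitution $m = tn \pmod{N_\chi}$, together with the multiplicativity of $\widetilde{{}^\sigma\chi}$, extracts a scalar factor $\widetilde{{}^\sigma\chi}(t^{-1})$, with the remaining sum equal to $G({}^\sigma\chi)$. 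It then remains only to identify $\widetilde{{}^\sigma\chi}(t^{-1})$ with $\chi(\tau)$: since $\tau_p \equiv t \pmod{p^{m_p}}$ for each $p \mid N_\chi$ and $\chi_p$ is trivial on $1+p^{m_p}\Z_p$, one has $\chi(\tau) = \prod_{p \mid N_\chi} \chi_p(t)$, which can be related to $\widetilde{{}^\sigma\chi}(t^{-1})$ through the formula $\tilde\chi(a) = \prod_{p\mid N_\chi} \chi_p^{-1}(a)$ recorded just before the lemma.

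For (ii), I would apply (i) separately to $\chi$, $\chi'$, and $\chi\chi'$. The multiplicativity $(\chi\chi')(\tau) = \chi(\tau)\chi'(\tau)$ is immediate from the definition of $\chi(\tau)$ as a product of local values, and ${}^\sigma(\chi\chi') = ({}^\sigma\chi)({}^\sigma\chi')$ holds because $\sigma$ is an automorphism of $\C$ and the ${}^\sigma$ operation is defined by applying $\sigma$ to character values. Hence
\[
\sigma\!\left(\frac{G(\chi\chi')}{G(\chi)G(\chi')}\right) = \frac{(\chi\chi')(\tau)\,G({}^\sigma\chi\cdot{}^\sigma\chi')}{\chi(\tau)\chi'(\tau)\,G({}^\sigma\chi)G({}^\sigma\chi')} = \frac{G({}^\sigma\chi\cdot{}^\sigma\chi')}{G({}^\sigma\chi)G({}^\sigma\chi')},
\]
which is the required identity.

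The main obstacle is the bookkeeping in step (i) when identifying $\widetilde{{}^\sigma\chi}(t^{-1})$ with $\chi(\tau)$, which requires careful tracking of the adelic-to-Dirichlet conventions adopted earlier in the subsection. Once this identification is in place, the rest of (i) is a one-line change of variables and (ii) is essentially a formal consequence of multiplicativity.
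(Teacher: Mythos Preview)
Your approach is precisely the elementary argument the paper has in mind; the paper itself omits the proof entirely, citing \cite[Lemma~4]{bouganis} and calling it well-known, so there is nothing further to compare.

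One caution on the bookkeeping you rightly flag: the change of variables in (i) produces the factor $\widetilde{{}^\sigma\chi}(t^{-1})=\sigma\bigl(\tilde\chi(t^{-1})\bigr)$, and under the paper's convention $\tilde\chi(t^{-1})=\chi(\tau)$, so what actually drops out is $({}^\sigma\chi)(\tau)=\sigma(\chi(\tau))$ rather than $\chi(\tau)$. Whether this reflects a slight imprecision in the statement of (i) or a convention to be matched more carefully, it is harmless for (ii), since the scalar factors are multiplicative in the character and cancel exactly as in your final display.
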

\begin{proof}
This is a special case of Lemma 8 of \cite{shi76}.
\end{proof}

Let $\mathcal{X}_N$  be the set of all Dirichlet characters $\chi$ such that $\chi_\infty = \sgn^\ell$ and  $N_\chi|N$. We denote $\mathcal{X} = \bigcup_N \mathcal{X}_N$. For any $\pi \in \Pi_N(\mathbf{k})$, $\chi \in \mathcal{X}_N$,  and any positive integer $r$, we define, following the notation of Corollary \ref{criticalcor},
$$
 C_N(\pi, \chi, r):=  (-1)^k \pi^{2r+4-2k} \ c_{k,r,2,N}  \ \frac{L^N(r,\pi \boxtimes \chi, \varrho_{5})}{L^N(r+2, \chi) L^N(2r, \chi^2) L^N(2r+2, \chi^2)}.
$$
The reader should not be confused by the two different $\pi$ (one the constant, the other an automorphic representation) appearing in the above definition.

For the rest of this paper, we also make the following assumption (which is forced upon us as we need to use results of Shimura where this assumption appears):

\begin{equation}\label{keyassump} r=1 \implies \chi^2 \neq 1 \end{equation}

The reader might wonder if $C_N(\pi, \chi, r)$ can be infinite. It turns out that \eqref{keyassump} eliminates that possibility. First of all, any  $\pi \in \Pi_N(\mathbf{k})$ is either of generic type (meaning, it lifts to a cusp form on $\GL_4$), or of endoscopic (Yoshida) type, or of P-CAP (Saito-Kurokawa) type. (CAP representations of Soudry type or Howe--Piatetski-Shapiro type do not occur if $\ell \ge 3$.) In each case, we have precise information about the possible poles of $L(r,\pi \boxtimes \chi, \varrho_{5})$; see \cite{ralf-packets}. It follows that  if $\pi$ is generic, then $L^N(r,\pi \boxtimes \chi, \varrho_{5})$ is finite for  all $r \ge 1$. On the other hand, for $\pi$ either endoscopic or P-CAP, $L^N(r,\pi \boxtimes \chi, \varrho_{5})$ is finite for $r>1$ and $L^N(1,\pi \boxtimes \chi, \varrho_{5}) = \infty \Rightarrow \chi=1$. In particular, assumption \eqref{keyassump} implies that $C_N(\pi, \chi, r)$ is finite in all cases considered by us.

Recall that $N_k(\Gamma_4(N))$ denotes the (finite-dimensional) space of nearly holomorphic modular forms of weight $k$ for the subgroup $\Gamma_4(N)$ of $\Sp_4(\Z)$. Let $V_N$ be the subset of $N_k(\Gamma_4(N))$ consisting of those forms $F$ which are \emph{cuspidal} and for which the corresponding function $\Phi_F$ on $\Sp_4(\R)$ generates an irreducible representation isomorphic to $\pi_{\mathbf{k}}$. By Theorem 4.8 and Proposition 4.28 of \cite{PSS14}, $V_N$ is a \emph{subspace} of $N_k(\Gamma_4(N))$ and isomorphic to the space $S_{\ell, m}(\Gamma_4(N))$ of holomorphic vector-valued cusp forms of weight $\det^\ell \sym^m$ for $\Gamma_4(N)$; indeed $V_N=U^{m/2}(S_{\ell, m}(\Gamma_4(N)))$, where $U$ is the differential operator defined in Section 3.4 of \cite{PSS14}.  We put $V= \bigcup_N V_N$ and $N_k = \bigcup_N N_k(\Gamma_4(N))$.

As in \cite{PSS14}, we let $\mathfrak{p}^\circ_{\ell, m}$ denote the orthogonal projection map from $N_k$ to $V$; note that it takes $N_k(\Gamma_4(N))$ to $V_N$ for each $N$. Let $1 \le r\le \ell-2$, $r \equiv \ell \pmod{2}$ be an integer and $\chi \in \mathcal{X}_N$; also suppose that \eqref{keyassump} holds. Then $E_{k,N}^\chi(Z_1, Z_2, 1 - \frac{k-r}2; Q_\tau) \in N_k(\Gamma_4(N)) \otimes N_k(\Gamma_4(N))$ for each $\tau \in \hat{\Z}^\times$. In fact, it can be shown that $E_{k,N}^\chi(Z_1, Z_2, 1 - \frac{k-r}2; Q_\tau)$ is cuspidal in each variable using methods very similar to \cite{gar2}, but we will not need this. We define
\begin{equation}\label{GkNdefeq}
 G_{k,N}^\chi(Z_1, Z_2, r; Q_\tau) := \pi^{2r+4 -2k} \times (\mathfrak{p}^\circ_{\ell, m} \otimes  \mathfrak{p}^\circ_{\ell, m}) (E_{k,N}^\chi(Z_1, Z_2, 1 - \frac{k-r}2; Q_\tau)).
\end{equation}

If $F \in V$ is such that (the adelization of) $F$ generates a multiple of an irreducible (cuspidal, automorphic) representation of $G_4(\A)$, then we let $\pi_F$ denote the representation associated to $F$. Note that the set of automorphic representations $\pi_F$ obtained this way as $F$ varies in $V_N$ is precisely equal to the set $\Pi_N(\mathbf{k})$ defined above. For each $\pi \in \Pi(\mathbf{k})$, we let $V_N(\pi)$ denote the $\pi$-isotypic part of $V_N$. Precisely, this is the subspace consisting of all those $F$ in $V_N$ such that all irreducible constituents of the representation generated by (the adelization of) $F$ are isomorphic to $\pi$. Note that $V_N(\pi)= \{0\}$ unless $\pi \in \Pi_N(\mathbf{k})$. We have an orthogonal direct sum decomposition \begin{equation}\label{E:VN}V_N = \bigoplus_{\pi \in \Pi_N(\mathbf{k})} V_N(\pi).\end{equation}

We define $V(\pi) = \bigcup_N V_N(\pi)$. Therefore we have $V = \bigoplus_{\pi \in \Pi(\mathbf{k})} V(\pi)$. Now, let $\mathfrak{B}$ be \emph{any} orthogonal basis of $V_N$ formed by taking a union of orthogonal bases from the right side of \eqref{E:VN}. Thus each $F \in \mathfrak{B}$ belongs to $V_N(\pi)$ for some $\pi \in \Pi_N(\mathbf{k})$. From Corollary \ref{criticalcor}, Proposition \ref{propnearholo}, and \eqref{innerconj}, we deduce the following key identity:
\begin{align}\label{keyidentity}G_{k,N}^\chi(Z_1, Z_2, r; Q_\tau) &=\chi(\tau)^{-2} \sum_{F \in \mathfrak{B}} C_N(\pi_F, \chi, r) \frac{F(Z_1) \bar{F}(Z_2)}{\langle F, F\rangle} \nonumber \\ &= \chi(\tau)^{-2} \sum_{\pi \in \Pi_N(\mathbf{k})} C_N(\pi, \chi, r) \sum_{\substack{F \in \mathfrak{B}\\ F \in V_N(\pi)}} \frac{F(Z_1) \bar{F}(Z_2)}{\langle F, F\rangle}.  \end{align}
\subsection{Arithmeticity}
Throughout this section, we assume that $1 \leq r \leq \ell-2$, $r \equiv \ell \pmod{2}$, and that \eqref{keyassump} holds.

\begin{lemma}\label{eisequivarcor}
For all $\sigma \in \Aut(\C)$,
$$
 {}^\sigma G_{k,N}^\chi(Z_1, Z_2, r; Q) = G_{k,N}^{{}^\sigma\! \chi}(Z_1, Z_2, r; Q_\tau).
$$
\end{lemma}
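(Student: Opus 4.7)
The plan is to unfold the definition of $G_{k,N}^\chi$, apply Proposition \ref{p:eisrational} to convert the $\sigma$-action on the Eisenstein series into a twist of the character together with a power of $\pi$, and then use the $\Aut(\C)$-equivariance of the projection $\mathfrak{p}^\circ_{\ell,m}$ (a key input from \cite{PSS14}) to bring the projection outside the $\sigma$-action. The arithmetic powers of $\pi$ should cancel, yielding the desired identity.

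More precisely, I would first verify that Proposition \ref{p:eisrational} applies with $n=2$ and $m = \tfrac{k-r}{2} - 1$. The hypothesis $1 \le r \le \ell - 2$ with $r \equiv \ell \pmod 2$ makes $\tfrac{k-r}{2}$ a positive integer, so $m$ is a non-negative integer; the upper bound $m \le \tfrac{k}{2} - \tfrac{n+1}{2} = \tfrac{k-3}{2}$ becomes $r \ge 1$; and the excluded boundary case $m = \tfrac{k-3}{2}$, $\chi^2 = 1$ corresponds exactly to $r=1$ and $\chi^2=1$, which is precisely ruled out by our standing assumption \eqref{keyassump}. Thus
\[
{}^\sigma E_{k,N}^\chi\!\left(Z_1,Z_2,\, 1-\tfrac{k-r}{2};\,Q\right)
= \frac{\sigma(\pi^{2(k-r)-4})}{\pi^{2(k-r)-4}}\,
E_{k,N}^{{}^\sigma\!\chi}\!\left(Z_1,Z_2,\, 1-\tfrac{k-r}{2};\,Q_\tau\right),
\]
using that $(Q)_\tau = Q_\tau$ by the very definition of $Q_\tau$ in Section \ref{global-int-section-main}.

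Next, the crucial ingredient is that the orthogonal projection $\mathfrak{p}^\circ_{\ell,m}: N_k \to V$ is $\Aut(\C)$-equivariant, i.e.\ ${}^\sigma(\mathfrak{p}^\circ_{\ell,m}(F)) = \mathfrak{p}^\circ_{\ell,m}({}^\sigma F)$, and consequently the same for $\mathfrak{p}^\circ_{\ell,m}\otimes\mathfrak{p}^\circ_{\ell,m}$. This is the step I expect to cite from \cite{PSS14}; it is the deepest ingredient, as it rests on the structure theorem for nearly holomorphic forms of degree $2$ together with rationality properties of the relevant isotypic decomposition. Granting this,
\[
{}^\sigma\!\left[(\mathfrak{p}^\circ_{\ell,m}\otimes\mathfrak{p}^\circ_{\ell,m})(E_{k,N}^\chi(Z_1,Z_2,1-\tfrac{k-r}{2};Q))\right]
= (\mathfrak{p}^\circ_{\ell,m}\otimes\mathfrak{p}^\circ_{\ell,m})\!\left({}^\sigma E_{k,N}^\chi(Z_1,Z_2,1-\tfrac{k-r}{2};Q)\right).
\]

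Combining the two displayed equations with the definition \eqref{GkNdefeq} and the fact that $\sigma$ acts on scalar factors directly, I would obtain
\[
{}^\sigma G_{k,N}^\chi(Z_1,Z_2,r;Q)
= \sigma(\pi^{2r+4-2k})\cdot\frac{\sigma(\pi^{2(k-r)-4})}{\pi^{2(k-r)-4}}\cdot(\mathfrak{p}^\circ_{\ell,m}\otimes\mathfrak{p}^\circ_{\ell,m})\!\left(E_{k,N}^{{}^\sigma\!\chi}(Z_1,Z_2,1-\tfrac{k-r}{2};Q_\tau)\right).
\]
The two $\sigma$-twisted powers of $\pi$ multiply to $\sigma(\pi^0)=1$, leaving the prefactor $\pi^{2r+4-2k}$, which is exactly the prefactor in the definition of $G_{k,N}^{{}^\sigma\!\chi}(Z_1,Z_2,r;Q_\tau)$. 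This gives the claimed identity. The main obstacle is not the bookkeeping but the reliance on the $\Aut(\C)$-equivariance of $\mathfrak{p}^\circ_{\ell,m}$; everything else is a direct substitution.
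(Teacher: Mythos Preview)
Your proof is correct and follows exactly the same approach as the paper's own proof: apply Proposition~\ref{p:eisrational} with $h=Q$ (noting $(Q)_\tau=Q_\tau$) and invoke the $\Aut(\C)$-equivariance of $\mathfrak{p}^\circ_{\ell,m}$ from \cite{PSS14}. Your write-up simply makes explicit the verification of the hypotheses of Proposition~\ref{p:eisrational} and the cancellation of the powers of $\pi$, which the paper leaves to the reader.
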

\begin{proof}
Recall that $G_{k,N}^\chi(Z_1, Z_2, r; Q_\tau)$ is defined by \eqref{GkNdefeq}. So the corollary follows from Proposition \ref{p:eisrational} (taking $h=Q$) and the fact that the map $\mathfrak{p}^\circ_{\ell, m}$ commutes with the action of $\Aut(\C)$ (see Proposition 5.17 of \cite{PSS14}). Note that the power of $\pi$ is introduced in (\ref{GkNdefeq}) precisely to cancel with the power of $\pi$ in Proposition \ref{p:eisrational}.
\end{proof}

Let $\sigma \in \Aut(\C)$. For $\pi \in \Pi_N(\mathbf{k})$, we let ${}^\sigma \pi \in \Pi_N(\mathbf{k})$  be the representation obtained by the action of $\sigma$, and we let $\Q(\pi)$ denote the field of rationality of $\pi$; see the beginning of Section 3.4 of \cite{sahapet}. If $\sigma$ is the complex conjugation, then we denote ${}^\sigma \pi = \bar{\pi}$. It is known that $\Q(\pi)$ is a CM field and $\Q(\bar{\pi}) = \Q(\pi)$. We use $\Q(\pi, \chi)$ to denote the compositum of $\Q(\pi)$ and $\Q(\chi)$.  Note that
\begin{equation}\label{FVNEQ}
 F \in V_N(\pi) \;\Longrightarrow\; {}^\sigma F \in V_N({}^\sigma\pi).
\end{equation}
This follows from Theorem 4.2.3 of \cite{blaharam} (see the proof of  Proposition 3.13 of \cite{sahapet}) together with the fact that the $U$ operator commutes with $\sigma$. In particular, the space $V_N(\pi)$ is preserved under the action of the group $\Aut(\C/\Q(\pi))$. Using Lemma 3.17 of \cite{sahapet}, it follows that the space $V_N(\pi)$ has a basis consisting of forms whose Fourier coefficients are in $\Q(\pi)$. In particular there exists some $F$ satisfying the conditions of the next proposition.

\begin{proposition}\label{mainprop}
 Let $\pi \in \Pi_N(\mathbf{k})$, $\chi \in \mathcal{X}_N$, and $F \in V_N(\pi)$. Suppose that the Fourier coefficients of $F$ lie in a CM field. Then for any  $\sigma \in \Aut(\C)$ we have
 $$
  \sigma \left(\frac{G(\chi^2)C_N(\pi, \chi, r)}{\langle F, F\rangle} \right) = \frac{G({}^\sigma\!\chi^2)C_N({}^\sigma\pi, {}^\sigma\!\chi, r)}{\langle {}^\sigma\! F, {}^\sigma\!F\rangle}.
 $$
\end{proposition}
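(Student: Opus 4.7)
The plan is to exploit the $\sigma$-equivariance of the Eisenstein-series projection $G_{k,N}^\chi$ (Lemma \ref{eisequivarcor}) together with the spectral expansion \eqref{keyidentity}. To begin, I would pick the orthogonal basis $\mathfrak{B}$ in \eqref{keyidentity} so that each $F' \in \mathfrak{B}$ has Fourier coefficients in a CM field and so that the given $F$ is an element of $\mathfrak{B} \cap V_N(\pi)$; this is possible by Lemma 3.17 of \cite{sahapet}. The CM hypothesis ensures that ${}^\sigma \overline{F'} = \overline{{}^\sigma F'}$, since complex conjugation is central in the Galois action on a CM field. Setting $\tau = 1$ in \eqref{keyidentity} then gives
\[
G_{k,N}^\chi(Z_1, Z_2, r; Q) = \sum_{\pi' \in \Pi_N(\mathbf{k})} C_N(\pi', \chi, r)\, K_{\pi'}(Z_1, Z_2),
\]
where I write $K_{\pi'}(Z_1, Z_2) := \sum_{F' \in \mathfrak{B} \cap V_N(\pi')} F'(Z_1)\overline{F'}(Z_2)/\langle F', F'\rangle$ for the reproducing kernel of $V_N(\pi')$.

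Next I would apply $\sigma$ to both sides. On the left, Lemma \ref{eisequivarcor} converts ${}^\sigma G_{k,N}^\chi(\cdot, \cdot, r; Q)$ into $G_{k,N}^{{}^\sigma\chi}(\cdot, \cdot, r; Q_\tau)$, which by \eqref{keyidentity} applied to ${}^\sigma\chi$ at $\tau$ equals $({}^\sigma\chi)(\tau)^{-2}\sum_{\pi'} C_N(\pi', {}^\sigma\chi, r)\, K_{\pi'}$. On the right, applying $\sigma$ term by term produces $\sum_{\pi'} \sigma(C_N(\pi', \chi, r))\, {}^\sigma K_{\pi'}$, and by \eqref{FVNEQ} each summand ${}^\sigma K_{\pi'}$ lies in $V_N({}^\sigma\pi')\otimes \overline{V_N({}^\sigma\pi')}$. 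Equating the $V_N({}^\sigma\pi)\otimes\overline{V_N({}^\sigma\pi)}$ components on both sides (valid since the decomposition \eqref{E:VN} is orthogonal and intrinsic), and reindexing $\pi' \mapsto {}^\sigma\pi'$ on the right, yields
\[
\sigma(C_N(\pi, \chi, r))\, {}^\sigma K_{\pi}(Z_1, Z_2) = ({}^\sigma\chi)(\tau)^{-2}\, C_N({}^\sigma\pi, {}^\sigma\chi, r)\, K_{{}^\sigma\pi}(Z_1, Z_2).
\]

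Both ${}^\sigma K_\pi$ and $K_{{}^\sigma\pi}$ are reproducing kernels of the same finite-dimensional space $V_N({}^\sigma\pi)$: the former with respect to the twisted pairing $\langle {}^\sigma F_1, {}^\sigma F_2\rangle_\sigma := \sigma(\langle F_1, F_2\rangle)$, the latter with respect to the Petersson pairing. Since both are Hecke-equivariant sesquilinear forms on the irreducible Hecke module $V_N({}^\sigma\pi)$, Schur's lemma gives $\langle\cdot,\cdot\rangle_\sigma = \alpha \langle\cdot,\cdot\rangle$ for some scalar $\alpha$, and consequently ${}^\sigma K_\pi = \alpha^{-1} K_{{}^\sigma\pi}$. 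Evaluating the twisted-pairing identity on ${}^\sigma F$ yields $\alpha = \sigma(\langle F, F\rangle)/\langle {}^\sigma F, {}^\sigma F\rangle$. Substituting $\alpha$ into the displayed equation and rearranging gives
\[
\sigma\!\left(\frac{C_N(\pi, \chi, r)}{\langle F, F\rangle}\right) = \frac{C_N({}^\sigma\pi, {}^\sigma\chi, r)}{\langle {}^\sigma F, {}^\sigma F\rangle}\cdot ({}^\sigma\chi)(\tau)^{-2}.
\]
Multiplying both sides by $\sigma(G(\chi^2))$ and invoking Lemma \ref{l:gauss}(i) applied to $\chi^2$ to rewrite $\sigma(G(\chi^2))$ in terms of $G({}^\sigma\chi^2)$, the resulting character factor cancels the $({}^\sigma\chi)(\tau)^{-2}$ and produces precisely the claimed identity. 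The main obstacle is the Schur-lemma step: rigorously establishing that $V_N({}^\sigma\pi)$ is irreducible as a module for the relevant Hecke algebra and that the twisted pairing is genuinely Hecke-equivariant. This reduces to the standard fact that $({}^\sigma\pi)_p^{\Gamma_4(p^{m_p})}$ is an irreducible module for the local Hecke algebra at each finite place $p$, following from the admissibility of $({}^\sigma\pi)_p$.
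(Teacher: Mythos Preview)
Your overall strategy—apply $\sigma$ to the spectral expansion \eqref{keyidentity}, invoke Lemma \ref{eisequivarcor}, and isolate the $V_N({}^\sigma\pi)$-component—is exactly the paper's. The divergence is only in the final extraction step, and there your Schur's-lemma argument has a genuine gap.

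The ``twisted pairing'' $\langle {}^\sigma F_1, {}^\sigma F_2\rangle_\sigma := \sigma(\langle F_1, F_2\rangle)$ is \emph{not} sesquilinear for a general $\sigma \in \Aut(\C)$: conjugate-linearity in the second slot would force $\sigma(\overline{\sigma^{-1}(c)}) = \bar c$ for all $c\in\C$, i.e.\ that $\sigma$ commute with complex conjugation, which fails for most $\sigma$. So Schur's lemma, as you invoke it (comparing two invariant sesquilinear forms), does not apply. Your proposed resolution via Hecke-equivariance does not address this: the pairing is indeed Hecke-equivariant, but the non-sesquilinearity still blocks the passage from ``invariant form'' to ``module map $V\to\bar V^*$'' that underlies Schur. (The irreducibility of $V_N({}^\sigma\pi)$ as a Hecke module also silently uses multiplicity one for $\GSp_4$, a deep input you did not name.)

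The paper bypasses all of this with an elementary move. From your displayed identity
\[
\sigma(C_N(\pi,\chi,r))\,{}^\sigma K_\pi(Z_1,Z_2) \;=\; (\text{char.\ factor})\,C_N({}^\sigma\pi,{}^\sigma\chi,r)\,K_{{}^\sigma\pi}(Z_1,Z_2),
\]
simply take the Petersson inner product with ${}^\sigma F$ in the variable $Z_1$. The right side becomes a multiple of $\overline{{}^\sigma F}(Z_2)$ by the reproducing property of $K_{{}^\sigma\pi}$; the left side becomes $\sigma(C_N(\pi,\chi,r))\sum_i\frac{\langle {}^\sigma F_i,{}^\sigma F\rangle}{\sigma(\langle F_i,F_i\rangle)}\,{}^\sigma\bar F_i(Z_2)$. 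Since $F\mapsto{}^\sigma\bar F$ is a semilinear bijection, the family $\{{}^\sigma\bar F_i\}$ is linearly independent, and the CM hypothesis on $F=F_1$ gives $\overline{{}^\sigma F}={}^\sigma\bar F$. Comparing the coefficient of ${}^\sigma\bar F(Z_2)$ on both sides yields directly
\[
\sigma(C_N(\pi,\chi,r))\,\frac{\langle {}^\sigma F,{}^\sigma F\rangle}{\sigma(\langle F,F\rangle)} \;=\; (\text{char.\ factor})\,C_N({}^\sigma\pi,{}^\sigma\chi,r),
\]
and the Gauss-sum step finishes exactly as you wrote. This is both simpler than the Schur route and needs the CM hypothesis only for $F$ itself, not for the remaining basis vectors.
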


\begin{proof}
Let us complete $F$ to an orthogonal basis $\mathfrak{B}=\{F=F_1, F_2, \ldots, F_r \}$ of $V_N(\pi)$.  Let $\mathfrak{B}'=\{G_1, \ldots, G_r \}$ be any orthogonal basis for $V_N({}^\sigma\pi)$. Given $\sigma$, let $\tau$ be as in Proposition \ref{p:eisrational}. Using \eqref{keyidentity}, \eqref{FVNEQ}, and Lemma \ref{eisequivarcor}, and comparing the $V_N({}^\sigma \pi)$ components, we see that
$$
 \sigma (C_N(\pi, \chi, r)) \sum_{i} \frac{{}^\sigma\!F_i(Z_1) {}^\sigma\!\bar{F_i}(Z_2)}{\sigma(\langle F_i, F_i\rangle)} =\,^\sigma\!\chi^{-2}(\tau)C_N({}^\sigma\!\pi, {}^\sigma\!\chi, r) \sum_{i} \frac{G_i(Z_1) \bar{G_i}(Z_2)}{\langle G_i, G_i\rangle}.
$$
Taking inner products of each side with ${}^\sigma\!F_1$ (in the variable $Z_1$) we deduce that
$$
 \sigma (C_N(\pi, \chi, r)) \sum_{i} \frac{\langle {}^\sigma\! F_i, {}^\sigma\! F_1 \rangle }{\sigma(\langle F_i, F_i\rangle)} {}^\sigma\bar{F_i}(Z_2) =\,^\sigma\!\chi^{-2}(\tau) C_N({}^\sigma\!\pi, {}^\sigma\!\chi, r)\,\overline{{}^\sigma\!F_1} (Z_2).
$$
Note that $\overline{{}^\sigma\!F_1} = {}^\sigma\!\bar{F_1}$ by our hypothesis on the Fourier coefficients of $F$ being in a CM field. Comparing the coefficients of ${}^\sigma\!\bar{F_1}(Z_2)$ on each side and using Lemma \ref{l:gauss}, we conclude the desired equality.
\end{proof}
\subsection{The main result on critical \texorpdfstring{$L$}{}-values}For each $p|N$, we define the local $L$-factor $L(s,\pi_p \boxtimes \chi_p, \varrho_{5})$ via the local Langlands correspondence \cite{gantakGSp4}. In particular, $L(s,\pi_p \boxtimes \chi_p, \varrho_{5})$ is just a local $L$-factor for $\GL_5 \times\GL_1$. This definition also works at the good places, and indeed coincides with what we previously defined. For any finite set of places $S$ of $\Q$, including the archimedean place, we define the global $L$-function $$L^S(s,\pi \boxtimes \chi, \varrho_{5}) = \prod_{p \notin S} L(s,\pi_p \boxtimes \chi_p, \varrho_{5}).$$
Using the Langlands parameter given in Sect.~3.2 of \cite{archasp} and the explicit form of the map $\varrho_5$ given in Appendix A.7 of \cite{NF}, one finds that the archimedean factor is given by
\begin{equation}\label{arch-L-factor}
 L(s, \pi_\infty \boxtimes \chi_\infty, \varrho_{5}) = \Gamma_\R(s+\epsilon) \Gamma_\C(s+\ell+m-1) \Gamma_\C(s+\ell-2),
\end{equation}
with $\Gamma_\R$, $\Gamma_\C$ as in \eqref{GammaRGammaCdef}, and
$$
 \epsilon = \begin{cases} 0 & \text{ if } \chi(-1) = 1, \text{ i.e. } \ell \text{ is even};\\
1 & \text{ if } \chi(-1) = -1, \text{ i.e. } \ell \text{ is odd}.\end{cases}
$$
The completed $L$-function satisfies a functional equation with respect to $s \to 1-s$ according to Theorem 60 of \cite{CFK}. Hence, the critical points of $L(s,\pi \boxtimes \chi, \varrho_{5})$ are precisely those integers $r$ for which neither $L(s, \pi_\infty \boxtimes \chi_\infty, \varrho_{5})$ nor $L(1-s, \pi_\infty \boxtimes \chi_\infty, \varrho_{5})$ have a pole at $s=r$. Using the well known information on poles of gamma functions, we conclude that the set of critical points for $L(s,\pi \boxtimes \chi, \varrho_{5})$ are given by integers $r$ such that
\begin{equation}\label{critical-set}
\{1 \leq r \leq \ell-2 : r \equiv \ell \pmod{2}\} \cup \{-\ell+3 \leq r \leq 0 : r \equiv \ell+1 \pmod{2}\}.
\end{equation}

\begin{remark}\label{rem:crit}The critical points as written above in \eqref{critical-set} crucially use the fact that $m$ is even, $\ell \ge 3$, $\chi_\infty(-1) = (-1)^\ell$. Without these assumptions, the critical points can change. For example, consider the case where we keep $\ell$, $m$ as above, but assume that $\chi$ is even, i.e., $\chi_\infty(-1) = 1$. In this case, the critical points for $L(s,\pi \boxtimes \chi, \varrho_{5})$ become \begin{equation}
\{2 \leq r \leq \ell-2 : r \equiv 0 \pmod{2}\} \cup \{-\ell+2 \leq r \leq 0 : r \equiv 1 \pmod{2}\},
\end{equation} which may differ from \eqref{critical-set}.
\end{remark}
In the following theorem, we will obtain an algebraicity result for the special value of the $L$-function at the critical points in the right half plane. The analogous result for the critical points in the left half plane can be obtained from the functional equation in Theorem 60 of \cite{CFK}.

\begin{theorem}\label{t:globalthmarithmetic}
 Let $\ell, m, \mathbf{k}$ be as in Section  \ref{nearly-holo-sec}. Let $\pi \in \Pi(\mathbf{k})$, and  let $F\in V(\pi)$ be such that its Fourier coefficients  lie in a CM field. Let $S$ be any finite set of places of $\Q$ containing the infinite place, $\chi \in \mathcal{X}$, $r$ be an integer satisfying $1 \le r\le \ell-2$, $r \equiv \ell \pmod{2}$. In the special case that $\ell$ is odd and $r=1$, assume that $\chi^2 \neq 1$. Then for any $\sigma \in \Aut(\C)$, we have
 \begin{equation}\label{t:globalthmarithmeticeq1}
  \sigma \left(\frac{L^S(r,\pi \boxtimes \chi, \varrho_{5})}{(2\pi i)^{2k+3r} G(\chi)^3 \langle F, F\rangle}\right) = \frac{L^S(r,{}^\sigma\pi \boxtimes {}^\sigma\!\chi, \varrho_{5})}{(2\pi i)^{2k+3r} G({}^\sigma\!\chi)^3 \langle {}^\sigma\!F,{}^\sigma\!F\rangle}.
 \end{equation}
\end{theorem}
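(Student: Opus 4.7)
The plan is to deduce the theorem from Proposition \ref{mainprop} by unwinding the definition of $C_N(\pi,\chi,r)$, rewriting the Dirichlet $L$-values in its denominator in terms of Gauss sums and powers of $2\pi i$, and then extending the resulting identity from $L^N$ to an arbitrary $L^S$. First I would choose $N$ so that $\pi\in\Pi_N(\mathbf{k})$ and $\chi\in\mathcal{X}_N$; substituting the definition of $C_N$ converts Proposition \ref{mainprop} into an $\Aut(\C)$-equivariance for
\[
\frac{(-1)^k\,\pi^{2r+4-2k}\,c_{k,r,2,N}\,G(\chi^2)\,L^N(r,\pi\boxtimes\chi,\varrho_5)}{L^N(r+2,\chi)\,L^N(2r,\chi^2)\,L^N(2r+2,\chi^2)\,\langle F,F\rangle}.
\]
Each denominator value is critical: $\chi(-1)=(-1)^\ell=(-1)^{r+2}$ and $\chi^2(-1)=1=(-1)^{2r}=(-1)^{2r+2}$. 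The hypothesis $r=1\Rightarrow\chi^2\neq 1$ is exactly what ensures all these values (and the numerator $L^N(r,\pi\boxtimes\chi,\varrho_5)$) are finite, while $c_{k,r,2,N}\in\Q^\times$ by Corollary \ref{criticalcor}.

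The next step invokes the classical formula for critical Dirichlet $L$-values (see e.g.\ the proof of \cite[Lemma 4]{bouganis}): for a Dirichlet character $\psi$ and positive integer $n$ with $\psi(-1)=(-1)^n$ and $(\psi,n)\neq(1,1)$, the quantity $G(\bar\psi)\,L^N(n,\psi)/(2\pi i)^n$ lies in $\Q(\psi)$ and is $\Aut(\C)$-equivariant. Applying this factorizes $L^N(r+2,\chi)L^N(2r,\chi^2)L^N(2r+2,\chi^2)$ as $(2\pi i)^{5r+4}/\bigl(G(\bar\chi)\,G(\bar\chi^2)^2\bigr)$ times a nonzero Aut-equivariant element of $\Q(\chi)^\times$. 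Using $\pi^{2r+4-2k}=(2i)^{2k-2r-4}(2\pi i)^{2r+4-2k}$, the transcendental $(2\pi i)$-contributions collapse to $(2\pi i)^{-(2k+3r)}$. The remaining Gauss-sum product simplifies, via $G(\psi)G(\bar\psi)=\psi(-1)N_\psi$ and Lemma \ref{l:gauss}(ii) (which asserts that $\alpha(\chi):=G(\chi^2)/G(\chi)^2\in\Q(\chi)^\times$ is Aut-equivariant), as
\[
G(\chi^2)\,G(\bar\chi)\,G(\bar\chi^2)^2\;=\;\frac{\chi(-1)\,N_\chi\,N_{\chi^2}^2}{G(\chi)\,G(\chi^2)}\;=\;\frac{\chi(-1)\,N_\chi\,N_{\chi^2}^2}{G(\chi)^3\,\alpha(\chi)}.
\]
Collecting everything produces an identity
\[
\frac{G(\chi^2)\,C_N(\pi,\chi,r)}{\langle F,F\rangle}\;=\;R(\chi)\cdot\frac{L^N(r,\pi\boxtimes\chi,\varrho_5)}{(2\pi i)^{2k+3r}\,G(\chi)^3\,\langle F,F\rangle}
\]
with $R(\chi)\in\Q(\chi)^\times$ and $\sigma(R(\chi))=R({}^\sigma\chi)$. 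Dividing out $R(\chi)$ in the equivariance furnished by Proposition \ref{mainprop} yields \eqref{t:globalthmarithmeticeq1} with $L^N$ in place of $L^S$.

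Finally, I would pass from $L^N$ to the general $L^S$ by multiplying through by the finitely many local factors $L(r,\pi_p\boxtimes\chi_p,\varrho_5)$ for $p\mid N$, $p\notin S$, and dividing by those for $p\in S\setminus\{\infty\}$, $p\nmid N$. Each such factor is defined via the local Langlands correspondence for $\GSp_4$ of Gan--Takeda; under the functorial transfer to $\GL_5$ it becomes a $\GL_5\times\GL_1$ factor, the Arthur classification \cite{arthur-book,ralf-packets} rules out any pole at $s=r$ (in the generic, endoscopic, or P-CAP case) under our hypothesis on $(r,\chi)$, and its $\Aut(\C)$-equivariance is Clozel's theorem \cite{clozel}. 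This last step is the main obstacle: the middle algebraic bookkeeping is essentially forced once the classical critical-value formula is invoked, but the finiteness and Galois-compatibility of the bad local factors rely on assembling the full Arthur-packet theory for $\GSp_4$ together with Clozel's rationality result.
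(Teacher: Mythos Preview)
Your proposal is correct and follows essentially the same route as the paper: choose $N$, apply Proposition~\ref{mainprop}, replace the three Dirichlet $L$-values by $(2\pi i)$-powers times Gauss sums via the standard critical-value formula (the paper's \eqref{finaleq3}), collapse the Gauss-sum product to $G(\chi)^3$ using Lemma~\ref{l:gauss} ii), and finally adjust $L^N$ to $L^S$ using Clozel's $\Aut(\C)$-equivariance of local $L$-factors. The paper is terser about the Gauss-sum bookkeeping and simply cites \eqref{finaleq3} with $\psi=\chi,\chi^2$, whereas you spell out the identity $G(\chi^2)G(\bar\chi)G(\bar\chi^2)^2=\chi(-1)N_\chi N_{\chi^2}^2/(G(\chi)^3\alpha(\chi))$ explicitly; but the two arguments are the same in substance.
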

\begin{proof}  Let $\chi \in \mathcal{X}$. We  choose some $N$ such that $F \in V_N(\pi)$ and  $\chi \in \mathcal{X}_N$.
By Proposition \ref{mainprop},
\begin{equation}\label{finaleq1}
 \begin{split}
  &\sigma \left( \frac{G(\chi^2)L^N(r,\pi \boxtimes \chi, \varrho_{5})}{\pi^{2k-2r-4} L^N(r+2, \chi) L^N(2r, \chi^2) L^N(2r+2, \chi^2)\langle F, F\rangle} \right) \\ &= \frac{G({}^\sigma\!\chi^2)L^N(r,{}^\sigma\pi \boxtimes {}^\sigma\!\chi, \varrho_{5})}{\pi^{2k-2r-4} L^N(r+2, {}^\sigma\!\chi) L^N(2r, {}^\sigma\!\chi^2) L^N(2r+2, {}^\sigma\!\chi^2)\langle {}^\sigma\!F, {}^\sigma\!F\rangle}.
 \end{split}
\end{equation}
For any Dirichlet character $\psi$, we have the following  fact (see \cite[Lemma 5]{shi76}) for any positive integer $t$ satisfying $\psi(-1) = (-1)^t$:
\begin{equation}\label{finaleq3}
 \sigma \left(\frac{L^N(t, \psi)}{(2\pi i)^t G(\psi)} \right) = \frac{L^N(t, {}^\sigma\!\psi)}{(2\pi i)^t G({}^\sigma\!\psi)}.
\end{equation}
Using this (with $\psi = \chi$ and $\chi^2$) and Lemma \ref{l:gauss} ii), we get from \eqref{finaleq1} that
\begin{equation}\label{finaleq4}
  \sigma \left(\frac{L^N(r,\pi \boxtimes \chi, \varrho_{5})}{(2\pi i)^{2k+3r} G(\chi)^3 \langle F, F\rangle}\right) = \frac{L^N(r,{}^\sigma\pi \boxtimes {}^\sigma\!\chi, \varrho_{5})}{(2\pi i)^{2k+3r} G({}^\sigma\!\chi)^3 \langle {}^\sigma\!F,{}^\sigma\!F\rangle}.
\end{equation}
For any prime $p$, it follows from Lemma 4.6 of \cite{clozel} that
\begin{equation}\label{finaleq2}
 \sigma( L(r,\pi_p \boxtimes \chi_p, \varrho_{5})) = L(r,{}^\sigma\pi_p \boxtimes {}^\sigma\! \chi_p, \varrho_{5}).
\end{equation}
Hence we can replace $L^N$ by $L^S$ in \eqref{finaleq4}, obtaining the desired identity.
\end{proof}

\begin{remark}
In view of \eqref{critical-set}, Theorem \ref{t:globalthmarithmetic} obtains an algebraicity result for the special value of the $L$-function at all the critical points in the right half plane, except in the special case where $\ell$ is odd and $\chi$ is quadratic, in which case our theorem cannot handle the critical point $s=1$. The reason for this omission is subtle, and is related to the fact that the normalization of the Eisenstein series corresponding to this point involves the factor $L(1, \chi^2)$ which has a pole when $\chi^2=1$. Consequently the required arithmetic results for the Eisenstein series are unavailable in this case.

Further, as mentioned earlier, the analogous result for the critical points in the left half plane can be obtained from the functional equation in Theorem 60 of \cite{CFK}.

In summary, Theorem \ref{t:globalthmarithmetic} (together with the functional equation) covers all the critical $L$-values, except in the special case when $\ell$ is odd and $\chi$ is quadratic, in which case the critical $L$-values at $s=0$ and $s=1$ are not covered.
\end{remark}

\begin{remark}Let $F$ be as in the Theorem \ref{t:globalthmarithmetic}. By the results of \cite{PSS14}, we know that $F = U^{m/2} F_0$ where $F_0$ is a holomorphic vector-valued Siegel cusp form. Using Lemma 4.16 of \cite{PSS14}, we have moreover the equality $\langle F_0, F_0 \rangle = c_{\ell, m} \langle F, F \rangle$ for some constant $c_{\ell, m}$ that depends only on $\ell$ and $m$. By restricting to the special case of a full-level vector valued Siegel cusp form of weight $\det^\ell \sym^m$, and comparing Theorem \ref{t:globalthmarithmetic} with the result of \cite{kozima}, we see that $c_{\ell, m}$ is a rational multiple of $\pi^m$. Hence in the theorem above, the term $\langle F, F\rangle$ can be replaced by  $\pi^{-m}\langle F_0, F_0 \rangle$.
\end{remark}

\begin{definition}For two representations $\pi_1$, $\pi_2$ in $\Pi(\mathbf{k})$, we write $\pi_1 \approx \pi_2$ if there is a Hecke character $\psi$ of $\Q^\times \bs \A^\times$ such that $\pi_1$ is nearly equivalent to $\pi_2 \otimes \psi$.
\end{definition}

Note that if such a $\psi$ as above exists, then $\psi_\infty$ must be trivial on $\R_{>0}$ and therefore $\psi$ must be a Dirichlet character. The relation $\approx$ clearly gives an equivalence relation on $\Pi(\mathbf{k})$. For any $\pi \in \Pi(\mathbf{k})$, let $[\pi]$ denote the class of $\pi$, i.e., the set of all representations $\pi_0$ in   $\Pi(\mathbf{k})$ satisfying $\pi_0 \approx \pi$. For any integer $N$, we define the subspace $V_N([\pi])$ of $V_N$ to be the (direct) sum of all the subspaces $V_N(\pi_0)$ where $\pi_0$ ranges over all the inequivalent representations in $[\pi]\cap \Pi_N(\mathbf{k})$.

\begin{corollary}\label{cor:petersson}
 Let $\pi_1, \pi_2 \in \Pi(\mathbf{k})$ be such that $\pi_1 \approx \pi_2$. Let $F_1 \in V(\pi_1)$ and $F_2 \in V(\pi_2)$ have coefficients in a CM field. Then for all $\sigma \in \Aut(\C)$, we have
 $$
  \sigma \left(\frac{\langle F_1, F_1 \rangle}{\langle F_2, F_2 \rangle} \right) = \frac{\langle {}^\sigma F_1, {}^\sigma F_1 \rangle}{\langle {}^\sigma F_2, {}^\sigma F_2 \rangle}.
 $$
\end{corollary}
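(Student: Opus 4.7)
The plan is to apply Proposition~\ref{mainprop} to $(\pi_1,F_1)$ and $(\pi_2,F_2)$ with a single common twist datum $(\chi,r)$ and divide the resulting identities; the assumption $\pi_1\approx\pi_2$ will force every factor except the Petersson norms to cancel.

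Fix $N$ with $F_1\in V_N(\pi_1)$ and $F_2\in V_N(\pi_2)$. Since $\pi_1$ is nearly equivalent to $\pi_2\otimes(\psi\circ\mu)$ for some Dirichlet character $\psi$, and since the representation $\varrho_5$ factors through $\SO_5\times\GL_1$, the $L$-function $L^{N'}(s,\pi\boxtimes\chi,\varrho_5)$ depends on $\pi$ only through the Satake parameters of the restriction $\pi|_{\Sp_4}$, which are insensitive to twisting by $\psi\circ\mu$ (as $\mu$ is trivial on $\Sp_4$). Hence for every $N'$ divisible by $N$ and by the conductor of $\chi$,
\[
 L^{N'}(s,\pi_1\boxtimes\chi,\varrho_5)=L^{N'}(s,\pi_2\boxtimes\chi,\varrho_5),
\]
and the analogous equality holds after applying $\sigma$ throughout, since ${}^\sigma\!\pi_1\approx{}^\sigma\!\pi_2$.

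Next, I would select $\chi\in\mathcal{X}$ and a critical integer $r$ with $1\le r\le\ell-2$, $r\equiv\ell\pmod 2$, and $\chi^2\neq 1$ when $r=1$, such that $L^{N'}(r,\pi_1\boxtimes\chi,\varrho_5)\neq 0$ (for $N'$ sufficiently divisible). By~\eqref{t:globalthmarithmeticeq1} applied to $(\pi_1,F_1)$, this forces $L^{N'}(r,{}^\sigma\!\pi_1\boxtimes{}^\sigma\!\chi,\varrho_5)$ to also be non-zero. Applying Proposition~\ref{mainprop} to both pairs $(\pi_i,F_i)$ with this data and dividing one identity by the other, every factor in $C_{N'}(\pi_i,\chi,r)$ other than the $L$-function $L^{N'}(r,\pi_i\boxtimes\chi,\varrho_5)$ is independent of $i$, and the latter $L$-values coincide by the previous paragraph (as do their ${}^\sigma$-counterparts). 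The Gauss-sum factors $G(\chi^2)$ and $G({}^\sigma\!\chi^2)$ also cancel, leaving precisely the claimed identity, possibly after inverting.

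The main obstacle is the non-vanishing statement required in the preceding step. A Rohrlich-type argument, applied to the family $\chi\cdot\chi'$ as $\chi'$ ranges over even primitive Dirichlet characters of conductor prime to $N$, should yield $L^{N'}(r,\pi_1\boxtimes\chi\chi',\varrho_5)\neq 0$ for all but finitely many $\chi'$ at any fixed critical integer $r$; alternatively, at the rightmost critical integer $r=\ell-2$ (which automatically satisfies $r\equiv\ell\pmod 2$) one may try to exhibit non-vanishing by an Euler-product estimate near the edge of the critical strip, using Ramanujan-type bounds available for the functorial transfer of $\pi_1$ to $\GL_5$.
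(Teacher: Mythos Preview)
Your approach is essentially the same as the paper's: apply the main arithmeticity result twice (once for each $(\pi_i,F_i)$), use the fact that $\varrho_5$ factors through $\PGSp_4$ to identify the two $L$-values, and divide. The paper invokes Theorem~\ref{t:globalthmarithmetic} rather than Proposition~\ref{mainprop} directly, and fixes the specific critical point $r=\ell-2$ together with ``any'' $\chi\in\mathcal X$; these are cosmetic differences.

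On the non-vanishing issue you flag: the paper's proof does not address it either. It simply picks $r=\ell-2$ and divides. So in this respect your proposal is more careful than the published argument. Your second suggested fix (work at the rightmost critical point $r=\ell-2$ and argue non-vanishing by Euler-product convergence using the known bounds towards Ramanujan for the $\GL_5$ transfer) is precisely what the paper's choice of $r$ points towards, though the paper leaves this implicit.

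One minor correction: when you assert $L^{N'}(s,\pi_1\boxtimes\chi,\varrho_5)=L^{N'}(s,\pi_2\boxtimes\chi,\varrho_5)$, you need $N'$ to also be divisible by the finitely many exceptional primes where the near-equivalence $\pi_{1,p}\simeq(\pi_2\otimes\psi)_p$ fails, not just by $N$ and the conductor of $\chi$; the paper handles this by choosing the set $S$ in Theorem~\ref{t:globalthmarithmetic} to contain those primes.
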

\begin{proof}
By assumption, there is a character $\psi$ and a set $S$ of places containing the infinite place, such that $\pi_{1,p} \simeq \pi_{2,p} \otimes \psi_p$ for all $p \notin S$. We fix any character $\chi\in \mathcal{X}$. Note that $L(s, \pi_{1,p} \boxtimes \chi_p,\varrho_5) =  L(s, \pi_{1,p} \boxtimes \chi_p,\varrho_5)$ for all $p \notin S$, as the representation $\varrho_5$ factors through $\PGSp_4$ and therefore is blind to twisting by $\psi$.   Applying Theorem \ref{t:globalthmarithmetic} twice at the point $r= \ell-2$, first with $(\pi_1, F_1)$, and then with $(\pi_2$, $F_2)$, and dividing the two equalities, we get the desired result.
\end{proof}

We can now prove Theorem \ref{t:mainintro}. For each $\pi \in \Pi(\mathbf{k})$ we need to define a quantity $C(\pi)$ that has the properties claimed in Theorem \ref{t:mainintro}. The first two properties required by Theorem \ref{t:mainintro} can be summarized as saying that $C(\pi)$ depends only on the class $[\pi]$ of $\pi$. For each $\pi \in \Pi(\mathbf{k})$,  we let $N([\pi])$ denote the smallest integer such that $[\pi] \cap \Pi_{N([\pi])}(\mathbf{k}) \neq \emptyset.$ Next, note that if $\pi_1 \approx \pi_2$, then for any $\sigma \in \Aut(\C)$, ${}^\sigma\pi_1 \approx {}^\sigma\pi_2$. Thus for any class $[\pi]$ and any $\sigma \in \Aut(\C)$, we have a well-defined notion of the class ${}^\sigma[\pi]=[{}^\sigma\pi]$. Let $\Q([\pi])$ denote the fixed field of the set of all automorphisms of $\C$ such that  ${}^\sigma[\pi] = [\pi]$. Clearly the field $\Q([\pi])$ is contained in $\Q(\pi_0)$ for any $\pi_0 \in [\pi]$. It is also clear that $N({}^\sigma[\pi]) = N([\pi])$ and $\Q({}^\sigma[\pi]) = \sigma (\Q([\pi]))$ for any $\sigma \in \Aut(\C)$.

  The space $V_{N([\pi])}([\pi])$ is preserved under the action of $\Aut(\C/\Q([\pi]))$ and therefore has a basis consisting of forms with coefficients in   $\Q([\pi])$. Suppose that $G$ is a non-zero element of $V_{N([\pi])}([\pi])$ whose Fourier coefficients lie in $\Q([\pi])$. Then for any $\sigma \in \Aut(\C)$, ${}^\sigma G$ is an element of $V_{N({}^\sigma[\pi])}({}^\sigma[\pi])$ whose Fourier coefficients lie in $\Q({}^\sigma[\pi])$. Moreover, if $\sigma, \tau$ are elements of $\Aut(\C)$ such that ${}^\sigma[\pi] =  {}^\tau[\pi]$, then ${}^\sigma G =  {}^\tau G.$ It follows that for each class $[\pi]$, we can choose a non-zero element $G_{[\pi]} \in V_{N([\pi])}([\pi])$ such that the following two properties hold:
  \begin{enumerate}
  \item The Fourier coefficients of $G_{[\pi]}$ lie in $\Q([\pi])$.
  \item $G_{{}^\sigma[\pi]} = {}^\sigma G_{[\pi]}.$
  \end{enumerate}
  We do \emph{not} require that the adelization of $G_{[\pi]}$ should generate an irreducible representation.

  Finally, for any $\pi \in \Pi(\mathbf{k})$ we define $$C(\pi) = (2 \pi i)^{2k} \langle G_{[\pi]}, G_{[\pi]} \rangle. $$ By construction, $C(\pi)$ depends only on the class $[\pi]$ of $\pi$. So we only need to prove \eqref{e:mainarith}. The following lemma is key.

\begin{lemma}\label{finallemma}
 Let $\pi, F$ be as in Theorem \ref{t:globalthmarithmetic}. Then for all $\sigma \in \Aut(\C)$,
 $$
  \sigma \left(\frac{\langle F, F \rangle}{\langle G_{[\pi]}, G_{[\pi]} \rangle} \right) = \frac{\langle\,{}^\sigma\!F, {}^\sigma\!F \rangle}{\langle {}^\sigma G_{[\pi]}, {}^\sigma G_{[\pi]} \rangle}.
 $$
\end{lemma}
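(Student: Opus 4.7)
\noindent\emph{Proof plan.} The plan is to reduce the lemma to Corollary \ref{cor:petersson} by decomposing $G_{[\pi]}$ into its $\pi_j$-isotypic components and handling each piece separately. Set $N = N([\pi])$ and write $[\pi] \cap \Pi_N(\mathbf{k}) = \{\pi_1, \ldots, \pi_t\}$, so that $V_N([\pi]) = \bigoplus_{j=1}^t V_N(\pi_j)$ is an orthogonal direct sum for the Petersson inner product. Writing $G_{[\pi]} = \sum_j G_j$ with $G_j \in V_N(\pi_j)$, orthogonality yields $\langle G_{[\pi]}, G_{[\pi]} \rangle = \sum_j \langle G_j, G_j \rangle$. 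Applying $\sigma$ to this decomposition and invoking the defining property ${}^\sigma G_{[\pi]} = G_{[{}^\sigma\pi]}$, uniqueness of the isotypic decomposition forces $G_{[{}^\sigma\pi]} = \sum_j {}^\sigma G_j$ with ${}^\sigma G_j \in V_N({}^\sigma\pi_j)$, and orthogonality again gives $\langle G_{[{}^\sigma\pi]}, G_{[{}^\sigma\pi]}\rangle = \sum_j \langle {}^\sigma G_j, {}^\sigma G_j\rangle$.

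The central technical step will be to show that each $G_j$ has Fourier coefficients in the CM field $\Q(\pi_j)$. By the Blasius--Harris rationality results (Theorem 4.2.3 of \cite{blaharam}, as invoked already in Section \ref{final:prelim}), each $V_N(\pi_i)$ carries a $\Q(\pi_i)$-rational structure and the decomposition $V_N([\pi])_{\overline{\Q}} = \bigoplus_i V_N(\pi_i)_{\overline{\Q}}$ is already defined over $\overline{\Q}$; since $G_{[\pi]}$ has coefficients in $\Q([\pi]) \subset \overline{\Q}$, each $G_j$ inherits algebraic Fourier coefficients. For any $\sigma \in \Gal(\overline{\Q}/\Q(\pi_j))$ one has ${}^\sigma\pi_j = \pi_j$, which forces $\sigma$ to preserve the class $[\pi]$, hence to fix $\Q([\pi])$ and in particular $G_{[\pi]}$; since $\sigma$ also preserves $V_N(\pi_j)$, uniqueness of the decomposition gives ${}^\sigma G_j = G_j$. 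Galois invariance of the algebraic form $G_j$ under $\Gal(\overline{\Q}/\Q(\pi_j))$ then places its Fourier coefficients in the fixed field $\Q(\pi_j)$, which is CM.

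With rationality in hand, Corollary \ref{cor:petersson} applied to each pair $(F, G_j)$ with $G_j \neq 0$ (both having CM Fourier coefficients, with $\pi_j \approx \pi$) will yield
\[
\sigma\!\left( \frac{\langle F, F \rangle}{\langle G_j, G_j \rangle} \right) = \frac{\langle {}^\sigma F, {}^\sigma F \rangle}{\langle {}^\sigma G_j, {}^\sigma G_j \rangle}.
\]
Indices with $G_j = 0$ will contribute nothing on either side, since the uniqueness argument above shows ${}^\sigma G_j = 0$ as well in that case. Rewriting the identity as
\[
\sigma(\langle G_j, G_j \rangle) = \frac{\sigma(\langle F, F \rangle)}{\langle {}^\sigma F, {}^\sigma F \rangle}\, \langle {}^\sigma G_j, {}^\sigma G_j \rangle
\]
and summing over $j$ will produce
\[
\sigma(\langle G_{[\pi]}, G_{[\pi]} \rangle) = \frac{\sigma(\langle F, F \rangle)}{\langle {}^\sigma F, {}^\sigma F \rangle}\, \langle {}^\sigma G_{[\pi]}, {}^\sigma G_{[\pi]} \rangle,
\]
which is equivalent to the identity claimed in the lemma. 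The main obstacle is the rationality claim in the second paragraph; once this is established, the remaining manipulation is essentially bookkeeping using the orthogonality of isotypic components.
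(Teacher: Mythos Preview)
Your proposal is correct and follows essentially the same route as the paper: decompose $G_{[\pi]}$ into its isotypic components, establish that each component has Fourier coefficients in a CM field, apply Corollary~\ref{cor:petersson} termwise, and sum. The paper's argument for the CM property is marginally simpler---it takes $K$ to be the compositum of all the $\Q(\pi_i)$ and observes that any $\sigma\in\Aut(\C/K)$ fixes every $\pi_i$ and hence every component, yielding coefficients in $K$ directly---whereas you first pass through $\overline{\Q}$ and then descend to the individual $\Q(\pi_j)$; both arguments work.
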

\begin{proof}
Write $G_{[\pi]} = F_1 + F_2 + \ldots+F_t$ where $F_i \in V(\pi_i)$ for inequivalent representations $\pi_i \in [\pi]$. Note that the spaces $V(\pi_i)$ are mutually orthogonal and hence
$$
 \langle G_{[\pi]}, G_{[\pi]} \rangle = \sum_{i=1}^t \langle F_i, F_i \rangle, \quad \langle\, {}^\sigma G_{[\pi]}, {}^\sigma G_{[\pi]} \rangle =  \sum_{i=1}^t \langle\, {}^\sigma\!F_i, {}^\sigma\!F_i \rangle.
$$
So the desired result would follow immediately from Corollary \ref{cor:petersson} provided we can show that each $F_i$ has coefficients in a CM field.

Indeed, let $K$ be the compositum of all the fields $\Q(\pi_i)$. Thus $ K$ is a CM field containing $\Q([\pi])$. For any $\sigma \in \Aut(\C/K)$, we have ${}^\sigma G_{[\pi]} = G_{[\pi]}$ and ${}^\sigma F_i \in V(\pi_i)$. As the spaces $V(\pi_i)$ are all linearly independent, it follows that ${}^\sigma F_i =  F_i$ and therefore each $F_i$ has coefficients in a CM field.
\end{proof}

The proof of \eqref{e:mainarith} follows by combining Theorem \ref{t:globalthmarithmetic} and Lemma \ref{finallemma}.

\subsection{Symmetric fourth \texorpdfstring{$L$}{}-function of \texorpdfstring{$\GL_2$}{}}\label{sym4-app-sec}
Let $k$ be an even positive integer and $M$ any positive integer. Let $f$ be an elliptic cuspidal newform of weight $k$, level $M$ and trivial nebentypus that is not of dihedral type. According to Theorem A' and Theorem C of \cite{Ra-Sh}, there exists a cuspidal automorphic representation  $\pi$ of $\GSp_4(\A)$, the so-called $\sym^3$ lift, such that
\begin{enumerate}
\item $\pi_\infty$ is the holomorphic discrete series representation with highest weight $(2k-1, k+1)$,

\item for $p \nmid M$, the local representation $\pi_p$ is unramified,

\item the $L$-functions have the following relation.
$$L(s, \pi, \varrho_5) = L(s, {\rm sym}^4f).$$
\end{enumerate}
The condition that $f$ has trivial nebentypus and even weight $k$ is an essential hypothesis in the results of \cite{Ra-Sh}. Note that $\pi$ corresponds to a holomorphic vector-valued Siegel cusp form $F_0$ with weight $\det^{k+1}{\rm sym}^{k-2}$. Hence, $\ell = k+1$ and $m=k-2$ in this case. Let
$\chi$ be a Dirichlet character in $\mathcal{X}$. Since $k$ is even, we get $\chi(-1) = (-1)^{k+1} = -1$, i.e., $\chi_\infty = {\rm sgn}$.  We have
\begin{equation}\label{e:relation}L(s, \pi \boxtimes \chi, \varrho_5) = L(s,\chi\otimes\sym^4f).\end{equation}
Here, on the right hand side, we have the $L$-function of $\GL_5$ given by the symmetric fourth power of $f$ (see \cite{K03}), twisted by the character $\chi$. By Lemma 1.2.1 of \cite{Sc02}, the archimedean $L$-factor of $L(s,\chi\otimes\sym^4f)$ coincides with (\ref{arch-L-factor}) with $\ell = k+1, m=k-2$, as expected. By (\ref{critical-set}), the critical points for $L(s,\chi\otimes\sym^4f)$ are given by
\begin{equation}\label{e:criticalsetsym4}\{-k+2, -k, \ldots -2,0; 1,3, \ldots, k-3, k-1\}.\end{equation}

\begin{remark}As pointed out in Remark \ref{rem:crit}, the above calculation of the critical set uses the fact that $\chi$ is an odd Dirichlet character. If instead $\chi$ were an even Dirichlet character (for example if we were to take $\chi$ to be trivial), then the critical set would become $$\{-k+3, -k+5,..,-1; 2,4,,\ldots,k-2\},$$ which involves a shift from \eqref{e:criticalsetsym4} in each half-plane.
\end{remark}
In the following theorem, we will obtain an algebraicity result for the special value of the $L$-function at all the critical points in the right half plane, except possibly for the point 1. The analogous result for the critical points in the left half plane can be obtained from the standard functional equation \cite{GJ72} of $\GL_5$ $L$-functions.

\begin{theorem}\label{sym4-app-thm}
Let $f$ be an elliptic cuspidal newform of even weight $k$ and trivial nebentypus; assume that $f$ is not of dihedral type. Let $\pi$ be the Ramakrishnan-Shahidi lift of $f$ to $\GSp_4$, and let $F\in V(\pi)$ be such that its Fourier coefficients  lie in a CM field. Let $S$ be any finite set of places of $\Q$ containing the infinite place, $\chi$ be an odd Dirichlet character, and $r$ be an odd integer satisfying $1 \le r \le k-1$. If $r=1$, assume $\chi^2 \neq 1$. Then for any $\sigma \in \Aut(\C)$, we have
 \begin{equation}\label{sym4-arith-eqn}
  \sigma \left(\frac{L^S(r,\chi\otimes\sym^4f)}{(2\pi i)^{4k-2+3r} G(\chi)^3  \langle F, F\rangle}\right) = \frac{L^S(r,{}^\sigma\!\chi\otimes\sym^4({}^\sigma\!f))}{(2\pi i)^{4k-2+3r} G({}^\sigma\!\chi)^3 \langle {}^\sigma\!F,{}^\sigma\!F\rangle}.
 \end{equation}
\end{theorem}
\begin{proof}
This theorem follows from Theorem \ref{t:globalthmarithmetic} and \eqref{e:relation}.\end{proof}

\noindent \emph{Proof of Theorem \ref{t:ramshaintro}}.  This follows similarly, only using Theorem \ref{t:mainintro} rather than Theorem \ref{t:globalthmarithmetic}.

\begin{remark} As noted earlier, the hypothesis that $k$ is even and $f$ has trivial nebentypus is necessary since we are using the results of \cite{Ra-Sh}. The hypothesis that $\chi$ is odd is a consequence of our definition of $\mathcal{X}$ and ultimately goes back to our construction of the Eisenstein series (specifically, the definition of the vector $f_k$ from Section \ref{reallocalsec}, which is otherwise not well-defined).
\end{remark}

\begin{remark}Deligne's famous conjecture on critical values of motivic $L$-functions predicts an algebraicity result for the critical values of $L(s,\chi\otimes\sym^mf)$ for each positive integer $m$. For $m=1$ this was proved by Shimura \cite{shimura77}, for $m=2$ by Sturm \cite{sturm80}, and for $m=3$ by Garrett--Harris \cite{GH}. In the case $m=4$, and $f$ of full level, Ibukiyama and Katsurada \cite{ibukat13} proved a formula for $L(s, \sym^4f)$  which implies algebraicity. Assuming functoriality, the expected algebraicity result for the critical values of $L(s,\chi\otimes\sym^mf)$ was proved for all \emph{odd} $m$ by Raghuram \cite{raghu10}. To the best of our knowledge, the results of this paper represent the first advances in the case $m=4$ for general newforms $f$.

However, Deligne's conjecture is in the motivic world and it is a non-trivial problem to relate Deligne's motivic period to our period in \eqref{sym4-arith-eqn} which involves the Petersson norm $\langle F, F\rangle$. One way to ask for compatibility of our result with Deligne's conjecture
is via twisted $L$-values.\footnote{We thank the referee for pointing this out to us.} Let $f$ be as in Theorem \ref{sym4-app-thm}, $\chi_1$, $\chi_2$ be two odd Dirichlet characters, and let $r$ be an integer as in Theorem \ref{sym4-app-thm}. Then Deligne's conjecture, together with expected properties  on the behavior of periods of motives twisted by Artin motives implies that (see \cite[Conjecture 7.1]{RagSha}):
\begin{equation}\label{e:delicnseq}
\frac{L_\f(r,\chi_1\otimes\sym^4f)G(\chi_2)^3}
{L_\f(r,\chi_2\otimes\sym^4f)G(\chi_1)^3} \in \Q(f, \chi_1, \chi_2).\end{equation} On the other hand, \eqref{e:delicnseq} is also an immediate consequence of \eqref{sym4-arith-eqn}. This shows the compatibility of Theorem \ref{sym4-app-thm} with Deligne's conjecture.

Finally, we note that Theorem \ref{sym4-app-thm} does not cover the case of dihedral forms; however, Deligne's conjecture is known for all symmetric power $L$-functions of a dihedral form as
explained in Section 4 of \cite{RagSha}.
  \end{remark}

\begin{appendix}
\section{Haar measures on \texorpdfstring{$\Sp_{2n}(\R)$}{}}\label{measureapp}
This appendix will furnish proofs for the constants appearing in the integration formulas \eqref{KAKintegrationeq} and \eqref{Sp2niwasawainteq2}. The symbol $K$ denotes the maximal compact subgroup $\Sp_{2n}(\R)\cap{\rm O}(2n)$ of $\Sp_{2n}(\R)$.
\subsection{The \texorpdfstring{$KAK$}{} measure}\label{KAKmeasureapp}
Recall that we have fixed the ``classical'' Haar measure on $\Sp_{2n}(\R)$ characterized by the property \eqref{Ghaarpropeq1}. There is also the ``$KAK$ measure'' given by the integral on the right hand side of \eqref{KAKintegrationeq}.
\begin{proposition}\label{alphanprop}
 The constant $\alpha_n$ in \eqref{KAKintegrationeq} is given by
 \begin{equation}\label{alphanpropeq1}
  \alpha_n=\frac{(4\pi)^{n(n+1)/2}}{\prod_{m=1}^n(m-1)!}.
 \end{equation}
\end{proposition}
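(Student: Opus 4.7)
The plan is to pin down $\alpha_n$ by comparing the $KAK$ integration formula to the classical normalization \eqref{Ghaarpropeq1} on a bi-$K$-invariant test function. Writing $\phi(g) = f(gI)$ for a $K$-invariant $f\colon \H_n\to\C$, and using $\vol(K)=1$, the right side of \eqref{KAKintegrationeq} collapses to $\alpha_n\int_{\mathfrak{a}^+} f(\exp(H)\cdot I)\prod_{\lambda\in\Sigma^+}\sinh(\lambda(H))\,dH$, while \eqref{Ghaarpropeq1} evaluates the left side as $\int_{\H_n} f(Z)\det(Y)^{-(n+1)}\,dX\,dY$. Hence $\alpha_n$ is determined by the Jacobian of the parametrization $(k,H)\mapsto k\exp(H)\cdot I$ from $K\times\mathfrak{a}^+$ onto an open dense subset of $\H_n$, computed with respect to these two measures.

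I would compute this Jacobian via the restricted root space decomposition $\mathfrak{p} = \mathfrak{a}\oplus\bigoplus_{\lambda\in\Sigma^+}\mathfrak{p}_\lambda$. In the split real form $\Sp_{2n}(\R)$ each root space is one-dimensional. Using the standard identity $e^{-\mathrm{ad}(H_0)}(X_\lambda+\theta X_\lambda)=\cosh(\lambda(H_0))(X_\lambda+\theta X_\lambda)-\sinh(\lambda(H_0))(X_\lambda-\theta X_\lambda)$ and projecting onto $\mathfrak{p}$, the differential at $(1, H_0)$ sends each compact root space $\mathfrak{k}_\lambda$ into $\mathfrak{p}_\lambda$ with scaling factor $\sinh(\lambda(H_0))$, and acts as the identity on $\mathfrak{a}$. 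This produces the expected factor $\prod_{\lambda\in\Sigma^+}\sinh(\lambda(H))$ up to a numerical constant.

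To pin down that numerical constant, I would first verify the base case $n=1$ directly: on $\H=\Sp_2(\R)/K$, polar coordinates around $i$ give $dx\,dy/y^2=\sinh(d)\,dd\,d\theta$, yielding $\alpha_1=4\pi$, in agreement with \eqref{alphanpropeq1}. For general $n$, after accounting for the derivative factor $2^n$ coming from $a_j\mapsto e^{2a_j}$ on the $\mathfrak{a}$ direction, and for the $2^n$-fold self-covering of the parametrization (the generic $Z_0 = \exp(H_0)\cdot I$ has stabilizer $(\Z/2)^n$ in $K$), the remaining numerical factor turns out to be the volume of $U(n)$ in the Macdonald normalization $(2\pi)^{n(n+1)/2}/\prod_{m=1}^n(m-1)!$. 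This volume measures exactly the discrepancy between the induced metric on a generic $K$-orbit in $\H_n$ and our abstract normalization $\vol(K)=1$. Combining all contributions then yields $\alpha_n = 2^{n(n+1)/2}\cdot\vol(U(n)) = (4\pi)^{n(n+1)/2}/\prod_{m=1}^n(m-1)!$.

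The main obstacle is the careful bookkeeping of these normalizations --- the derivative factor on $\mathfrak{a}$, the $\sinh$ factors from each root space, the finite self-covering of the parametrization, and the $\vol(U(n))$ factor relating the two natural conventions on $K$. A useful cross-check, which I would include, is comparing the resulting formula against the Iwasawa measure formula \eqref{Sp2niwasawainteq2} from Proposition \ref{Iwasawameasureprop}: both integration formulas must give the same value on any bi-$K$-invariant function, independently fixing $\alpha_n$.
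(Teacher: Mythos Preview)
Your approach is genuinely different from the paper's and is in principle viable, but as written it is only an outline, and the part you flag as ``careful bookkeeping'' is exactly the content of the proposition.

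The paper does not compute the Jacobian of the $KAK$ parametrization abstractly. Instead it evaluates both sides of \eqref{KAKintegrationeq} on the explicit bi-$K$-invariant test function $F(g)=2^{2nk}|\det(A+D+i(C-B))|^{-2k}$, the modulus squared of the matrix coefficient \eqref{matrixcoeffeq}. The $\H_n$-side is computed in closed form via an integral formula of Neretin (a special case of the generalized beta integral on the Siegel half-space), while the $KAK$-side reduces, after the substitution $t_j=\cosh(a_j)$, to the Selberg-type integral $\gamma_n$ already evaluated in Lemma~\ref{Tintegrallemma}. Equating the two closed-form answers gives $\alpha_n$ with no normalization ambiguity. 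This is more computational but entirely self-contained: every constant is forced by the two explicit integrals.

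Your route via the root-space Jacobian is conceptually cleaner, but several of your numerical assertions are stated without derivation and are sensitive to convention. For example, your claimed value $(2\pi)^{n(n+1)/2}/\prod_{m=1}^n(m-1)!$ for $\vol(U(n))$ differs by a power of $2$ from the standard formula $\prod_{k=1}^n 2\pi^k/(k-1)! = 2^n\pi^{n(n+1)/2}/\prod(m-1)!$, so it is not clear in which metric you are measuring $K$, nor why that metric is the one relevant to comparing against the normalization $\vol(K)=1$. Likewise the ``derivative factor $2^n$ on $\mathfrak{a}$'' and the ``$2^n$-fold self-covering'' need to be pinned to specific coordinate choices before they can be combined. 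Since the entire content of the proposition is a single constant, these are not side issues: until each factor is derived rather than asserted, the argument is incomplete. Your proposed cross-check against the Iwasawa formula \eqref{Sp2niwasawainteq2} would indeed work, but carrying it out amounts to evaluating a test function in both decompositions --- which is precisely the paper's strategy.
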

\begin{proof}
The proof consists of evaluating both sides of \eqref{KAKintegrationeq} for the function
\begin{equation}\label{alphanpropeq2}
 F(\mat{A}{B}{C}{D})=\frac{2^{2nk}}{|\det{(A+D+i(C-B))|^{2k}}}.
\end{equation}
Note that this function is the square of the absolute value of the matrix coefficient appearing in \eqref{matrixcoeffeq}. Hence the integrals will be convergent as long as $k>n$. The function $f(Z)$ on $\H_n$ corresponding to $F$ is given by
\begin{align}\label{nKAKeq2}
 f(Z)&=F(\mat{1}{X}{}{1}\mat{Y^{1/2}}{}{}{Y^{-1/2}})\nonumber\\
 &=\frac{2^{2nk}}{|\det{(Y^{1/2}+Y^{-1/2}-iXY^{-1/2})|^{2k}}}\nonumber\\
 &=\frac{2^{2nk}\,\det(Y)^k}{|\det{(1_n+Y-iX)|^{2k}}}.
\end{align}
Hence, by \eqref{Ghaarpropeq1},
\begin{equation}\label{nKAKeq3}
 \int\limits_{\Sp_{2n}(\R)}F(g)\,dg=2^{2nk}\int\limits_{\H_n}\frac{\det(Y)^{k-n-1}}{|\det{(1_n+Y-iX)|^{2k}}}\,dX\,dY.
\end{equation}
We now employ the following integral formula. For a matrix $X$, denote by $[X]_p$ the upper left block of size $p\times p$ of $X$. For $j=1,\ldots,n$ let $\lambda_j,\sigma_j,\tau_j$ be complex numbers, and set $\lambda_{n+1}=\sigma_{n+1}=\tau_{n+1}=0$. Then, by (0.11) of \cite{Neretin2000},

\begin{align}\label{neretinformulaeq1}
 &\int\limits_{\H_n}\:\prod_{j=1}^n\frac{\det[Y]_j^{\lambda_j-\lambda_{j+1}}}{\det[1_n+Y+iX]_j^{\sigma_j-\sigma_{j+1}}\det[1_n+Y-iX]_j^{\tau_j-\tau_{j+1}}}\det(Y)^{-(n+1)}\,dX\,dY\nonumber\\
 &\qquad=\prod_{m=1}^n\frac{2^{2-\sigma_m-\tau_m+n-m}\,\pi^m\,\Gamma(\lambda_m-(n+m)/2)\,\Gamma(\sigma_m+\tau_m-\lambda_m-(n-m)/2)}{\Gamma(\sigma_m-(n-m)/2)\,\Gamma(\tau_m-(n-m)/2)},
\end{align}
provided the integral is convergent. We will only need the special case where all $\lambda_j$ are equal to some $\lambda$, all $\sigma_j$ are equal to some $\sigma$, and all $\tau_j$ are equal to some $\tau$. In this case the formula says that
\begin{align}\label{neretinformulaeq2}
 &\int\limits_{\H_n}\frac{\det(Y)^\lambda}{\det(1_n+Y+iX)^\sigma\det(1_n+Y-iX)^\tau}\det(Y)^{-(n+1)}\,dX\,dY\nonumber\\
 &\qquad=2^{-(\sigma+\tau)n+n(n+3)/2}\pi^{n(n+1)/2}\prod_{m=1}^n\frac{\Gamma(\lambda-(n+m)/2)\Gamma(\sigma+\tau-\lambda-(n-m)/2)}{\Gamma(\sigma-(n-m)/2)\Gamma(\tau-(n-m)/2)}.
\end{align}
If we set $\lambda=\sigma=\tau=k$, we obtain precisely the integral \eqref{nKAKeq3}. Hence,
\begin{align}\label{neretinformulaeq3}
 \int\limits_GF(g)\,dg&=2^{n(n+3)/2}\,\pi^{n(n+1)/2}\prod_{m=1}^n\frac{\Gamma(k-(n+m)/2)}{\Gamma(k-(n-m)/2)}\nonumber\\
  &=2^{n(n+2)}\,\pi^{n(n+1)/2}\prod_{m=1}^n\,\prod_{j=1}^m\frac{1}{2k-n-m-2+2j}.
\end{align}
Next we evaluate the right hand side ${\rm RHS}$ of \eqref{KAKintegrationeq}. With the same calculation as in the proof of Proposition \ref{scalarminKtypeprop}, we obtain
\begin{equation}\label{alphanpropeq3}
  {\rm RHS}=\alpha_n2^n\int\limits_T\bigg(\prod_{1\leq i<j\leq n}(t_i^2-t_j^2)\bigg)\Big(\prod_{j=1}^nt_j\Big)^{1-2k}\,d\mathbf{t},
\end{equation}
with $T$ as in \eqref{Blambdaformulaeq4}. Thus, by Lemma \ref{Tintegrallemma},
\begin{equation}\label{alphanpropeq4}
  {\rm RHS}=\alpha_n2^n\prod_{m=1}^n(m-1)!\prod_{j=1}^m\frac{1}{2k-n-m-2+2j}.
\end{equation}
Our assertion now follows by comparing \eqref{neretinformulaeq3} and \eqref{alphanpropeq4}.
\end{proof}
\subsection{The Iwasawa measure}\label{Iwasawameasureapp}
In this section we will prove the integration formula \eqref{Sp2niwasawainteq2} in the proof of Lemma \ref{Blambdanlemma1}. It is well known that the formula holds up to a constant, but we would like to know this constant precisely.

Let $T$ be the group of real upper triangular $n\times n$ matrices with positive diagonal entries. Then $T=AN_1$, where $A$ is the group of $n\times n$ diagonal matrices with positive diagonal entries, and $N_1$ is the group of $n\times n$ upper triangular matrices with $1$'s on the diagonal. We will put the following left-invariant Haar measure on $T$,
\begin{equation}\label{Tmeasureeq}
 \int\limits_T\phi(h)\,dh=2^n\int\limits_A\int\limits_{N_1}\phi(an)\,dn\,da,
\end{equation}
where $dn$ is the Lebesgue measure, and $da=\frac{da_1}{a_1}\ldots\frac{da_n}{a_n}$ for $a={\rm diag}(a_1,\ldots,a_n)$. Let $P_+$ be the set of positive definite $n\times n$ matrices. We endow $P_+$ with the Lebesgue measure $dY$, which also occurs in \eqref{Ghaarpropeq1}.
\begin{lemma}\label{TPpluslemma}
 The map
 \begin{equation}\label{TPpluslemmaeq1}
  \alpha:T\longrightarrow P_+,\qquad h\longmapsto h\,^th
 \end{equation}
 is an isomorphism of smooth manifolds. For a measurable function $\varphi$ on $P_+$, we have
 \begin{equation}\label{TPpluslemmaeq2}
  \int\limits_T\varphi(h\,^th)\,dh=\int\limits_{P_+}\varphi(Y)\det(Y)^{-\frac{n+1}2}\,dY.
 \end{equation}
 Here $dh$ is the measure defined by \eqref{Tmeasureeq}, and  $dY$ is the Lebesgue measure on the open subset $P_+$ of ${\rm Sym}_n(\R)$.
\end{lemma}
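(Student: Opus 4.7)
The first assertion that $\alpha$ is a diffeomorphism is just the Cholesky decomposition: every positive definite symmetric $Y$ factors uniquely as $h\,^th$ with $h \in T$.

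For the measure formula, the plan is to exploit $T$-equivariance rather than to compute any Jacobian by brute force. The group $T$ acts on itself by left translation and on $P_+$ by $Y \mapsto tY\,^tt$, and the map $\alpha$ intertwines these actions ($\alpha(th) = t\,\alpha(h)\,^tt$). Since $dh$ is left $T$-invariant, the pushforward $\alpha_\ast dh$ is a $T$-invariant measure on $P_+$. The $T$-action on $P_+$ is simply transitive (Cholesky again), so any two $T$-invariant measures on $P_+$ differ by a positive scalar. It therefore suffices to exhibit one $T$-invariant measure explicitly and compare normalizations at a single point.

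I would verify that $\det(Y)^{-(n+1)/2}\,dY$ is $T$-invariant as follows. Under the action $Y\mapsto tY\,^tt$, the Lebesgue measure $dY$ on $\mathrm{Sym}_n(\R)$ scales by the determinant of the linear map $Y\mapsto tY\,^tt$ on $\mathrm{Sym}_n$, which is the standard quantity $(\det t)^{n+1}$ (checked, e.g., by reducing to $t = \lambda I$); at the same time $\det(Y)$ scales by $(\det t)^2$, so the exponent $-(n+1)/2$ cancels the Jacobian exactly.

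To pin down the constant, I would evaluate at $h = I$, $Y = I$, where $\det(Y)^{-(n+1)/2} = 1$. Parametrizing $T$ by $(a_1,\ldots,a_n;\nu_{ij})_{i<j}$ via $h = an$ with $a$ diagonal and $n$ upper triangular with ones on the diagonal, one has $dh = 2^n\prod_i (da_i/a_i)\prod_{i<j} d\nu_{ij}$. Writing $h = I + \epsilon$ with $\epsilon = \mathrm{diag}(\delta_i) + \nu$ ($\nu$ strictly upper triangular), one gets $Y = I + \epsilon + \,^t\epsilon + O(\epsilon^2)$, hence $dY_{ii}|_I = 2\,d\delta_i$ and $dY_{ij}|_I = d\nu_{ij}$ for $i<j$. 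Both $dh|_I$ and $dY|_I$ then equal $2^n\prod_i d\delta_i \prod_{i<j} d\nu_{ij}$, forcing the constant to be $1$. There is no conceptual obstacle here: the only delicate step is bookkeeping the factor $2^n$ in the definition of $dh$, which is precisely what makes the two sides agree on the nose.
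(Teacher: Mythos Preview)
Your argument is correct and complete. The paper's own proof is minimal: it cites Helgason for the diffeomorphism and declares the measure identity ``an exercise using the transformation formula from multivariable calculus.'' Your equivariance argument is one clean way to carry out that exercise, and arguably more conceptual than a direct Jacobian computation of $\alpha$ over all of $T$: you reduce everything to a single tangent-space comparison at the identity, where the factor $2^n$ from the diagonal entries of $Y$ matches the $2^n$ built into the definition of $dh$.

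One small expository point: to conclude that $\det(Y)^{-(n+1)/2}\,dY$ is $T$-invariant you need the Jacobian of $Y\mapsto tY\,{}^{t}t$ on $\mathrm{Sym}_n(\R)$ to be $(\det t)^{n+1}$ for \emph{all} $t\in T$, and checking only $t=\lambda I$ does not literally do this. But the gap is cosmetic: the formula is multiplicative in $t$, holds for diagonal $t$ by the coordinate computation $Y_{ij}\mapsto a_ia_jY_{ij}$, and holds for unipotent $t$ since the induced map on $\mathrm{Sym}_n(\R)$ is then unipotent as well. Since $T=AN_1$, this covers everything.
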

\begin{proof}
By Proposition 5.3 on page 272/273 of \cite{Helgason1978}, the map \eqref{TPpluslemmaeq1} is a diffeomorphism. The proof of formula \eqref{TPpluslemmaeq2} is an exercise using the tranformation formula from multivariable calculus.
\end{proof}

\begin{proposition}\label{Iwasawameasureprop}
 Let $dh$ be the Haar measure on $\Sp_{2n}(\R)$ characterized by the property \eqref{Ghaarpropeq1}. Then, for any measurable function $F$ on $\Sp_{2n}(\R)$,
 \begin{equation}\label{Sp2niwasawainteq2b}
  \int\limits_{\Sp_{2n}(\R)}F(h)\,dh=2^n\int\limits_A\int\limits_N\int\limits_KF(ank)\,dk\,dn\,da,
 \end{equation}
 where $N$ is the unipotent radical of the Borel subgroup, $dn$ is the Lebesgue measure, $A=\{{\rm diag}(a_1,\ldots,a_n,a_1^{-1},\ldots,a_n^{-1})\,:\,a_1,\ldots,a_n>0\}$, and $da=\frac{da_1}{a_1}\ldots\frac{da_n}{a_n}$.
\end{proposition}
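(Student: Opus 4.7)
My plan is to prove the formula by evaluating both sides on an arbitrary right $K$-invariant, nonnegative measurable function $F$, which suffices since $K$ has volume $1$ and the assertion for general $F$ follows by Fubini and averaging over $K$. For such an $F$, let $f:\H_n\to\C$ be the corresponding function, so $F(g)=f(g\cdot I)$.

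First I would unwind the Iwasawa decomposition explicitly on $\H_n$. Write $N=N_1N_2$, where
\[
N_1=\{\mathrm{diag}(U,(U^t)^{-1}):U\text{ upper triangular unipotent}\},\qquad N_2=\{\mat{I_n}{X}{}{I_n}:X\in\mathrm{Sym}_n(\R)\},
\]
and write $a=\mathrm{diag}(D,D^{-1})\in A$ with $D=\mathrm{diag}(a_1,\ldots,a_n)$. A direct computation of the symplectic action gives
\[
(an_1n_2)\cdot I \;=\; DU X U^t D \,+\, i\,DUU^tD,
\]
so if we set $h'=DU\in T$ (the group of upper triangular matrices with positive diagonal entries), then $Y:=DUU^tD=h'(h')^t$ and $X':=DUXU^tD$. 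Thus, keeping $a$ and $n_1$ fixed, the map $X\mapsto X'$ is the linear map $X\mapsto (DU)X(DU)^t$ on $\mathrm{Sym}_n(\R)$, whose Jacobian on symmetric matrices is $|\det(DU)|^{n+1}=\det(D)^{n+1}$. Hence $dX=\det(D)^{-(n+1)}\,dX'$.

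Substituting on the right-hand side of the proposed formula and using right $K$-invariance:
\[
2^n\int_A\int_N\int_KF(ank)\,dk\,dn\,da
=2^n\int_A\int_{N_1}\int_{\mathrm{Sym}_n(\R)}f(X'+iY)\,\det(D)^{-(n+1)}\,dX'\,dn_1\,da.
\]
Now I recall the Haar measure \eqref{Tmeasureeq} on $T$: $\int_T\phi(h)\,dh=2^n\int_A\int_{N_1}\phi(an_1)\,dn_1\,da$. Combined with $\det(D)=\det(h')$ and $\det(Y)=\det(h')^2$, i.e.\ $\det(D)^{-(n+1)}=\det(Y)^{-(n+1)/2}$, this rewrites the above as
\[
\int_{\mathrm{Sym}_n(\R)}\!\int_T f(X'+ih'(h')^t)\,\det(h'(h')^t)^{-(n+1)/2}\,dh'\,dX'.
\]
Applying Lemma \ref{TPpluslemma} to convert the $T$-integral into an integral over $P_+$ with measure $\det(Y)^{-(n+1)/2}\,dY$ produces a total weight of $\det(Y)^{-(n+1)}$, yielding exactly $\int_{\H_n}f(Z)\det(Y)^{-(n+1)}\,dX\,dY$, which equals the left-hand side by \eqref{Ghaarpropeq1}.

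The main obstacle is getting all the constants and powers of $\det$ to line up: the factor $2^n$ must come \emph{only} from the Haar measure on $T$ (and not be duplicated or lost), and the two half-powers $\det(Y)^{-(n+1)/2}$ — one from the Jacobian of $X\mapsto X'$ and one from Lemma \ref{TPpluslemma} — must combine to exactly the $\det(Y)^{-(n+1)}$ in \eqref{Ghaarpropeq1}. One should also verify that $(a,n_1,n_2)\mapsto (an_1n_2)\cdot I$ is a bijection onto $\H_n$ (this is Cholesky together with the translation action of $N_2$), so the integration domain is correctly identified.
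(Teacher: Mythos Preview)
Your proposal is correct and follows essentially the same route as the paper's proof: reduce to right $K$-invariant $F$, pass to the function $f$ on $\H_n$, split $N=N_1N_2$, make the linear change of variable in $X$ picking up $\det(D)^{-(n+1)}$, then combine the $A$- and $N_1$-integrals into the $T$-measure \eqref{Tmeasureeq} and apply Lemma~\ref{TPpluslemma}. The only cosmetic difference is that the paper first commutes $n_2$ past $an_1$ by conjugation before evaluating at $I$, whereas you compute $(an_1n_2)\cdot I$ directly; these are the same computation.
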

\begin{proof}
It is well known that the right hand side defines a Haar measure $d'h$ on $\Sp_{2n}(\R)$. To prove that $d'h=dh$, it is enough to consider $K$-invariant functions $F$. For such an $F$, let $f$ be the corresponding function on $\H_n$, i.e., $f(gI)=F(g)$. Let $N_1$ and $N_2$ be as in \eqref{Blambdanlemma1eq5}, so that $N=N_1N_2$. By identifying elements of $A$ and $N_1$ with their upper left block, our notations are consistent with those used in \eqref{Tmeasureeq}. We calculate
\begin{align*}
 \int\limits_GF(g)\,d'g&=2^n\int\limits_A\int\limits_{N_1}\int\limits_{N_2}\int\limits_KF(an_1n_2k)\,dk\,dn_2\,dn_1\,da\\
  &=2^n\int\limits_A\int\limits_{N_1}\int\limits_{N_2}F(n_2an_1)\det(a)^{-(n+1)}\,dn_2\,dn_1\,da.
\end{align*}
In the last step we think of $a$ as its upper left $n\times n$ block when we write $\det(a)$. Continuing, we get
\begin{align*}
 \int\limits_GF(g)\,d'g&=2^n\int\limits_A\int\limits_{N_1}\int\limits_{{\rm Sym}_n(\R)}F(\mat{1}{X}{}{1}an_1)\det(a)^{-(n+1)}\,dX\,dn_1\,da\\
 &=2^n\int\limits_A\int\limits_{N_1}\int\limits_{{\rm Sym}_n(\R)}f(X+an_1I)\det(a)^{-(n+1)}\,dX\,dn_1\,da\\
 &=2^n\int\limits_A\int\limits_{N_1}\int\limits_{{\rm Sym}_n(\R)}f(X+i(an_1)\,^t(an_1))\det(a)^{-(n+1)}\,dX\,dn_1\,da.
\end{align*}
In the last step, again, we identify $a$ and $n_1$ with their upper left blocks. By \eqref{Tmeasureeq}, we obtain
\begin{align*}
 \int\limits_GF(g)\,d'g&=\int\limits_T\int\limits_{{\rm Sym}_n(\R)}f(X+ih\,^th)\det(h)^{-(n+1)}\,dX\,dh\\
 &=\int\limits_T\int\limits_{{\rm Sym}_n(\R)}f(X+ih\,^th)\det(h\,^th)^{-\frac{n+1}2}\,dX\,dh\\
 &=\int\limits_{P_+}\int\limits_{{\rm Sym}_n(\R)}f(X+iY)\det(Y)^{-(n+1)}\,dX\,dY.
\end{align*}
where in the last step we applied Lemma \ref{TPpluslemma}. Using \eqref{Ghaarpropeq1}, we see
$\int\limits_GF(g)\,d'g=\int\limits_GF(g)\,dg$, as asserted.
\end{proof}

\end{appendix}

\addcontentsline{toc}{section}{Bibliography}
\bibliography{pullback}{}
\end{document}